\newtheorem{theorem}{Theorem}[section]
\newtheorem{lemma}[theorem]{Lemma}
\newtheorem{corollary}[theorem]{Corollary}
\theoremstyle{definition}
\newtheorem{definition}{Definition}
\newtheorem*{remark}{Remark}
\def\PG{\mathrm{PG}}  
\def\SL{\mathrm{SL}}\def\PSL{\mathrm{PSL}}
\def\Aut{\mathrm{Aut}}
\def\PGammaL{\mathrm{P}\Gamma\mathrm{L}}
\def\PGL{\mathrm{PGL}} \def\PSL{\mathrm{PSL}}
\def\GL{\mathrm{GL}}
 \def\B{\mathcal{B}} 
\def\D{\mathcal{D}} \def\E{\mathcal{E}}
\def\l{\lambda}
\def\la{\langle}
\def\ra{\rangle}
\def\L{\mathcal{L}}  
 \def\P{\mathbb{P}}
 \def\S{\mathcal{S}}
\def\Ff{\mathcal{F}}
\def\Fb{\overline{\mathcal{F}}}
\def\Pqt{\mathbb{P}_{q^t}^{r}}
\def\Pq{\mathbb{P}_q^{rt}}
\def\GammaL{\Gamma\mathrm{L}}
\def\Fqtr{\mathbb{F}_{q^t}^r}
\def\Fqtrs{\mathbb{F}_{q^t}^{r*}}
\def\F{\mathbb{F}}
\title{Desarguesian spreads and field reduction for elements of the semilinear group}
\author{Geertrui Van de Voorde \thanks{This author is a postdoctoral fellow of the Research Foundation Flanders (FWO -- Vlaanderen).}}
\begin{document}
\date{}
\maketitle
\begin{abstract} The goal of this note is to create a sound framework for the interplay between field reduction for finite projective spaces, the general semilinear groups acting on the defining vector spaces and the projective semilinear groups. This approach makes it possible to reprove a result of Dye on the stabiliser in $\PGL$ of a Desarguesian spread in a more elementary way, and extend it to $\PGammaL(n,q)$. Moreover a result of Drudge \cite{Drudge} relating Singer cycles with Desarguesian spreads, as well as a result on subspreads (by Sheekey, Rottey and Van de Voorde \cite{ABB}) are reproven in a similar elementary way. Finally, we try to use this approach to shed a light on Condition (A) of Csajb\'ok and Zanella, introduced in the study of linear sets \cite{corrado}.
\end{abstract}

{\bf Keywords:} Field reduction, Desarguesian spread

{\bf Mathematics Subject Classfication:} 51E20, 51E23, 05B25,05E20
\section{Introduction}
In the last two decades a technique, commonly referred to as `field reduction', has been used in many constructions and characterisations in finite geometry, e.g. the construction of $m$-systems and semi-partial geometries (SPG-reguli) \cite{deirdre,Thas}, eggs and translation generalised quadrangles \cite{FGQ}, spreads and linear blocking sets \cite{guglielmo}, $\ldots$. A survey paper of Polverino \cite{olga} collects these and other applications of linear sets in finite geometry. For projective and polar spaces, the technique itself was more elaborately discussed in \cite{FQ11}. The main goal of this paper is to contribute to this study by also considering the interplay between finite projective spaces and the projective (semi-)linear groups acting on the definining spaces. A lot of results in this paper are well-known or considered to be folklore. However, as it is our aim to provide a sound framework, we chose to reprove them here. In this way, we hope to provide a self-contained paper collecting several results which are related to field reduction and Desarguesian spreads, in particular, we reprove (and extend to $\PGammaL$) a theorem of Dye on the stabiliser of a Desarguesian spread in Theorem \ref{Dye}, we integrate the study of Singer cycles in this framework, giving an explicit proof for a result of Drudge in Theorem \ref{drud}. In Theorem \ref{uniek}, we give a new direct proof for the fact that a Desarguesian spread has a unique Desarguesian subspread, avoiding the use of indicator sets.

Finally, in Section \ref{cor} we try to give some more information on linear sets satisfying the Condition (A) introduced by Csajb\'ok and Zanella. In the appendix (Section \ref{appendix}), we give some more details about the embedding of $\PGL(r,q^t)$ in $\PGL(rt,q)$, motivated by a misconception that has appeared in the literature.

\section{Field reduction for vector spaces and general semilinear groups}
\subsection{Definitions and the Desarguesian vector space partition}
It is well-known that the finite field $\F_{q^t}$ forms a $t$-dimensional vector space over $\F_q$, and hence, that an $r$-dimensional vector space $V$ over $\F_{q^t}$ corresponds to an $rt$-dimensional vector space $W$ over $\F_q$. To this end, the vectors of $W$ are identified with the vectors of $V$, addition in $W$ is defined as the addition in $V$, but we only allow scalar multiplication with elements of $\F_q$. It is easy to see that  in this way $W$ forms an $\F_q$-vector space, and since it has $q^{rt}$ vectors, it has dimension $rt$ over $q$.
In this construction, the vectors in $W$ are identified with the vectors of $V$ but to be able to define everything what follows in a precise way, we introduce a map $\Ff$ which maps the vector $v$ in $V$ to its corresponding vector $\Ff(v)$ in $W$. We let $\Ff: \Fqtr\to \F_q^{rt}$, $v\mapsto \Ff(v)$ denote this bijection between vectors, i.e.
$$\Ff(v)=\Ff(w)\iff v=w,$$ and see that $\Ff$ is an $\F_q$-linear map, i.e.
\begin{eqnarray*}
\Ff(v+w)&=&\Ff(v)+\Ff(w)\\
\Ff(\l v)&=&\l \Ff(v) \ \mathrm{ for } \ \l \in \F_q.
\end{eqnarray*}
We call $\Ff$ a {\em vector field reduction map}, and abbreviate this as {\em a vfr-map}.

The map $\Ff$ as defined here is not unique, there are multiple choices of such a bijection, corresponding to different choices of a basis of $\F_{q^t}$ over $\F_q$. More explicitely, we can construct a vector field reduction map as follows: fix a basis $\{v_1,\ldots,v_t\}$ of $\mathbb{F}_{q^t}$ over $\mathbb{F}_q$ and define $\Ff:\F_{q^t}^r\to\F_q^{rt}$, $(w_1,\ldots,w_r)\mapsto (\alpha_{11},\alpha_{12},\ldots,\alpha_{1t},\alpha_{21},\ldots,\alpha_{2t},\ldots,\alpha_{r1},\ldots,\alpha_{rt})$ with $w_i=\sum_{j=1}^t \alpha_{ij}v_j$.

In the remainder of this paper, we will always fix the vector field reduction map. The following lemma justifies this, since the images under different vector field reduction maps are equivalent under the general linear group.
\begin{lemma} Let $\Ff: \Fqtr\to \F_q^{rt}$, and $\Ff': \Fqtr\to \F_q^{rt}$ be vector field reduction maps. If we define $\xi(\Ff(v))=\Ff'(v)$ for all $v$ of $\F_{q^t}^r$, then $\xi\in \GL(rt,q)$. 
\end{lemma}
\begin{proof} Let $w_1,w_2\in \F_q^{rt}$, then $w_1=\Ff(v_1), w_2=\Ff(v_2)$ for some $v_1,v_2 \in \F_{q^t}^r$. Now $\xi(w_1+w_2)=\xi(\Ff(v_1)+\Ff(v_2))=\xi(\Ff(v_1+v_2))=\Ff'(v_1+v_2)=\Ff'(v_1)+\Ff'(v_2)=\xi(\Ff(v_1))+\xi(\Ff(v_2))=\xi(w_1)+\xi(w_2)$. For $\lambda \in \F_q$, we have that $\xi(\lambda w_1)=\xi(\lambda \Ff(v_1))=\xi(\Ff(\lambda v_1))=\Ff'(\l v_1)=\l\Ff'(v_1)=\l\xi(\F(v_1))=\l\xi(w_1)$, hence $\xi\in \GL(rt,q)$.
\end{proof}


A {\em vector space partition $\S$} of a vector space $V$ is a set of subspaces such that every non-zero vector of $V$ is contained in a unique member of $\S$.

\begin{lemma} \label{partition} Let $\Ff: \Fqtr\to \F_q^{rt}$ be a  a vfr-map. Fix a vector $v\in \Fqtrs$, then $\{\Ff(\alpha v)\mid \alpha \in \F_{q^t}\}$ defines a $t$-dimensional $\F_q$-vector space of $\F_q^{rt}$. Moreover, the set $\{ \{\Ff(\alpha v)\mid \alpha \in \F_{q^t}\}\mid v\in \F_{q^t}^{r*} \}$ defines a vector space partition of $\F_q^{rt}$ into $q^r$ $t$-dimensional $\F_q$-vector spaces.
\end{lemma}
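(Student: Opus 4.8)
The plan is to reduce both claims, using only the $\F_q$-linearity and the bijectivity of $\Ff$, to elementary statements about the $\F_q$-subspace structure of $\Fqtr$ regarded as the $\F_q$-vector space $W$. The single fact I would isolate first is that an $\F_q$-linear bijection $W\to\F_q^{rt}$ maps $\F_q$-subspaces onto $\F_q$-subspaces of the same $\F_q$-dimension, bijectively; so it is enough to analyse, inside $W$, the sets
\[
U_v:=\{\alpha v\mid\alpha\in\F_{q^t}\}\qquad(v\in\Fqtrs)
\]
and then transport the conclusions through $\Ff$, writing $S_v:=\Ff(U_v)$.

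For the first statement: $U_v$ is the one-dimensional $\F_{q^t}$-subspace of $\Fqtr$ spanned by $v$, and the $\F_{q^t}$-vector space axioms make it immediate that $U_v$ is closed under addition and under multiplication by scalars from $\F_q\subseteq\F_{q^t}$; hence $U_v$ is an $\F_q$-subspace of $W$. Since $v\neq 0$, the map $\alpha\mapsto\alpha v$ is a bijection $\F_{q^t}\to U_v$, so $|U_v|=q^t$ and therefore $\dim_{\F_q}U_v=t$. Applying $\Ff$ shows that $\{\Ff(\alpha v)\mid\alpha\in\F_{q^t}\}=S_v$ is a $t$-dimensional $\F_q$-subspace of $\F_q^{rt}$.

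For the second statement I would first note that $U_v=U_{v'}$ (hence $S_v=S_{v'}$) whenever $v'\in\F_{q^t}^{*}v$, so that the distinct members of the family $\{S_v\mid v\in\Fqtrs\}$ are in bijection with the one-dimensional $\F_{q^t}$-subspaces of $\Fqtr$. Then I would verify the partition property directly: given a nonzero $w\in\F_q^{rt}$, bijectivity of $\Ff$ gives a unique $u\in\Fqtr$ with $\Ff(u)=w$, and $u\neq0$ because $\Ff(0)=0$; since $w=\Ff(1\cdot u)$, this puts $w$ in $S_u$. For uniqueness, if $w\in S_v$ too, then $w=\Ff(\alpha v)$ for some $\alpha\in\F_{q^t}$, and injectivity of $\Ff$ forces $\alpha v=u$; as $u\neq0$ we get $\alpha\neq0$, whence $U_v=U_u$ and $S_v=S_u$. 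So $w$ lies in exactly one member, $\{S_v\}$ is a vector space partition, and the number of its members equals the number of one-dimensional $\F_{q^t}$-subspaces of $\Fqtr$, i.e. $\tfrac{q^{rt}-1}{q^t-1}$ (equivalently, the $q^{rt}-1$ nonzero vectors of $\F_q^{rt}$ split into classes of size $q^t-1$).

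There is no real obstacle; the proof is bookkeeping. The points that need a little care are keeping a clean distinction between a vector and the $\F_{q^t}$-subspace it spans, and between the $\F_q$- and $\F_{q^t}$-structures on $\Fqtr$; it is also worth saying explicitly that the family $\{S_v\}$ is a priori indexed by nonzero vectors although its members correspond to the points of $\PG(r-1,q^t)$, and that pairwise disjointness (off $0$) of distinct members is automatic, since two distinct one-dimensional $\F_{q^t}$-subspaces of $\Fqtr$ meet only in $0$ and $\Ff$ is injective.
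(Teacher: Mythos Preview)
Your argument is correct and is essentially the same as the paper's: both verify closure under $\F_q$-linear combinations and count $q^t$ elements to get dimension $t$, and both show the partition property by observing that a common nonzero vector forces the two defining vectors to be $\F_{q^t}$-proportional. The only cosmetic difference is that you first analyse the one-dimensional $\F_{q^t}$-subspaces $U_v$ inside $\Fqtr$ and then transport through the $\F_q$-linear bijection $\Ff$, whereas the paper carries out the same computations directly on the images $\Ff(\alpha v)$; incidentally, your count $\tfrac{q^{rt}-1}{q^t-1}$ for the number of parts is the correct one (the ``$q^r$'' in the statement is a slip).
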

\begin{proof} We see that $\Ff(\alpha v)+\l \Ff(\alpha' v)$, with $\l$ in $\F_q$, $\alpha,\alpha'\in \F_{q^t}$ and $v\in \F_{q^t}^{r*}$ equals $\Ff(\alpha v)+\Ff(\l\alpha'v)=\Ff((\alpha+\l\alpha')v)$, hence, that $\{\Ff(\alpha v)\mid \alpha\in \F_{q^t}\}$ forms an $\F_q$-subspace $S$. The number of vectors in $S$ equals $q^t$, hence, $S$ is a $t$-dimensional vector space over $\F_q$.

If for some $v,w\in \F_{q^t}^r$, $\{\Ff(\alpha v)\mid \alpha\in \F_{q^t}\}\cap \{\Ff(\alpha' w)\mid \alpha'\in \F_{q^t}\}\neq 0$, then, since $\alpha_1 v=\alpha'_1 w$ for some $\alpha_1,\alpha'_1\in \F_{q^t}$, it follows that $v=\frac{\alpha'_1}{\alpha_1}w$  and that $\{\Ff(\alpha v)\mid \alpha\in \F_{q^t}\}= \{\Ff(\alpha' w)\mid \alpha'\in \F_{q^t}\}$, which proves the statement.
\end{proof}


For reasons that will become clear later, we call the vector space partition found in the previous lemma the {\em Desarguesian vector space partition of $\F_q^{rt}$, obtained from $\Ff: \Fqtr\to \F_q^{rt}$}.

\subsection{Field reduction for semilinear maps}
Let $\tau$ be an element of $\GammaL(r,q^t)$, then $\tau$ is a semilinear map from $\F_{q^t}^r$ to itself, i.e.
\begin{align*}
\tau:  \F_{q^t}^r & \rightarrow \F_{q^t}^r\\
\tau(v+w)&=\tau(v)+\tau(w)\\
\tau(\alpha v)&=\psi(\alpha)\tau(v), \ {\mathrm{for\ some}}\ \psi \in \Aut(\F_{q^t}),
\end{align*}
and $\tau$ is invertible. Note that $\tau$ determines the element $\psi\in \Aut(\F_{q^t})$ in a unique way; we say that $\psi$ is the field automorphism {\em associated with $\tau$}. The set $\Aut(\F_{q^t})$ is the set of all field automorphisms of $\F_{q^t}$ and it is well-known that every field automorphism of $\F_{q^t}$ is of the form $x\rightarrow x^{p^i}$ for some $i$, where $q$ is a power of the prime $p$. Note that for a field automorphism $\psi$ it holds that $\psi(\alpha \beta)=\psi(\alpha)\psi(\beta)$. It easily follows that $\psi$ fixes every subfield of $\F_{q^t}$, since the elements in a subfield $\F_{q^{t'}}$ are precisely those elements $x\in \F_{q^t}$ for which $x^{q^{t'}}=x$. Multiplication of two elements $\tau_1$ and $\tau_2$ in $\GammaL(r,q^t)$ is defined as the composition $\circ$ of maps and we use the notation $\tau_2\tau_1$ for $\tau_2\circ \tau_1$.

\begin{lemma}\label{lem10}Let $B=\{v_1,\ldots,v_r\}$ be a basis of $\Fqtr$, and let $\xi$ be the following map, where $\mu_i\in \F_{q^t}$, $i=1,\ldots,r$.
\begin{eqnarray*}\xi:\F_{q^t}^r&\rightarrow& \F_{q^t}^r\\
\sum_{i=1}^r \mu_i v_i&\mapsto& \sum_{i=1}^r \psi(\mu_i)\xi'(v_i), \ {\mathrm{for\ some\ fixed }}\ \psi\in \Aut(\F_{q^t})\ \mathrm{and} \ \xi'\in \GL(r,q^t).\end{eqnarray*} Then $\xi$ defines an element of $\GammaL(r,q^t)$ and every element of $\GammaL(r,q^t)$ is of this form. With respect to a fixed basis $B$, the element $\xi$ determines $\xi'\in \GL(r,q^t)$ in a unique way and we may write $\xi=(\xi',\psi)_B$. 
\end{lemma}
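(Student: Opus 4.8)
The plan is to verify the claim in three stages: first, that any map $\xi$ of the given form lies in $\GammaL(r,q^t)$; second, that every element of $\GammaL(r,q^t)$ arises this way; and third, that the pair $(\xi',\psi)$ is uniquely determined once the basis $B$ is fixed. For the first stage, additivity of $\xi$ is immediate from the fact that $\psi$ is additive and $\xi'$ is linear: writing $u=\sum\mu_iv_i$ and $w=\sum\nu_iv_i$, one has $u+w=\sum(\mu_i+\nu_i)v_i$ and $\psi(\mu_i+\nu_i)=\psi(\mu_i)+\psi(\nu_i)$, so $\xi(u+w)=\xi(u)+\xi(w)$. For the semilinearity relation, compute $\xi(\alpha u)=\xi\bigl(\sum(\alpha\mu_i)v_i\bigr)=\sum\psi(\alpha\mu_i)\xi'(v_i)=\psi(\alpha)\sum\psi(\mu_i)\xi'(v_i)=\psi(\alpha)\xi(u)$, using that $\psi$ is multiplicative. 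Invertibility of $\xi$ follows because $\xi'$ is invertible and $\psi$ is a bijection of $\F_{q^t}$: the inverse is given explicitly by the same construction applied to the basis $\{\xi'(v_1),\dots,\xi'(v_r)\}$ with the linear part $(\xi')^{-1}$ and field automorphism $\psi^{-1}$, or one simply notes that $\xi$ is an additive bijection on a finite set.

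For the second stage, let $\xi\in\GammaL(r,q^t)$ be arbitrary with associated field automorphism $\psi\in\Aut(\F_{q^t})$. Define $\xi'$ to be the unique $\F_{q^t}$-linear map determined by $v_i\mapsto \xi(v_i)$ for $i=1,\dots,r$; this is well-defined and lies in $\GL(r,q^t)$ because $\xi$ is a bijection, hence $\{\xi(v_1),\dots,\xi(v_r)\}$ is again a basis (an additive bijection sends a basis to a spanning set of the right size). Then for any $u=\sum\mu_iv_i$ we have, using additivity and the semilinearity relation for $\xi$, that $\xi(u)=\sum\xi(\mu_iv_i)=\sum\psi(\mu_i)\xi(v_i)=\sum\psi(\mu_i)\xi'(v_i)$, so $\xi$ has exactly the claimed form.

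For the uniqueness stage, suppose $\xi=(\xi',\psi)_B=(\xi'',\psi')_B$. Applying both descriptions to $\mu v_1$ with $\mu\in\F_{q^t}$ gives $\psi(\mu)\xi'(v_1)=\psi'(\mu)\xi''(v_1)$; taking $\mu=1$ yields $\xi'(v_1)=\xi''(v_1)$, and since $\xi'(v_1)\neq 0$ we may cancel it to get $\psi(\mu)=\psi'(\mu)$ for all $\mu$, so $\psi=\psi'$. Then $\xi'$ and $\xi''$ agree on the basis $B$ and are both $\F_{q^t}$-linear, hence $\xi'=\xi''$. I do not expect a serious obstacle here; the only point demanding slight care is the observation that an additive (indeed semilinear) bijection of $\Fqtr$ carries the basis $B$ to another basis, which is what makes the definition of $\xi'$ in the second stage legitimate and what forces $\xi'(v_1)\neq 0$ in the uniqueness argument. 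Everything else is a direct unwinding of the definitions of $\GammaL$, linearity, and the multiplicativity and additivity of field automorphisms.
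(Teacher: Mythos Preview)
Your proof is correct and follows essentially the same approach as the paper's own proof: verify semilinearity and invertibility directly, recover $\xi'$ as the linear map agreeing with $\xi$ on the basis $B$, and deduce uniqueness by evaluating on basis vectors. The only minor difference is that you re-derive the uniqueness of the associated field automorphism $\psi$ inside the argument, whereas the paper invokes the earlier observation that any semilinear map determines its $\psi$ uniquely.
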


\begin{proof} It is straightforward to check that $\xi$ defines a semilinear map and, using that $\xi'$ and $\psi$ are invertible, that $\xi$ is invertible. Now let $\tau$ be an invertible semilinear map with $\tau(\alpha v)=\psi(\alpha)\tau(v)$. Define $\xi'$ to be the $\F_{q^t}$-linear map which maps $v_i$ to $\tau(v_i)$, i.e. $\xi'(\sum_{i=1}^r \mu_i v_i)=\sum_{i=1}^r\mu_i \tau(v_i)$. Then $\tau(\sum_{i=1}^r \mu_i v_i)=\sum_{i=1}^r \tau(\mu_i v_i)=\sum_{i=1}^r \psi(\mu_i)\tau(v_i)=\sum_{i=1}^r \psi(\mu_i)\xi'(v_i)$ with $\xi' \in \GL(r,q^t)$ since $\tau$ is invertible. Hence, $\tau$ is of the prescribed form.

We have already seen before that $\psi$ is the unique field automorphism associated with $\xi$. If for a fixed basis $B=\{v_1,\ldots,v_r\}$ the equation $\sum_{i=1}^r \psi(\mu_i)\xi_1'(v_i)$ $=\sum_{i=1}^r \psi(\mu_i)\xi_2'(v_i)$ holds for all choices $\mu_i$ in $\F_{q^t}$, then it easily follows that $\xi_1'(v_k)=\xi_2'(v_k)$ for all $k=1,\ldots,r$, and hence, that $\xi_1'=\xi_2'$ since they are $\F_{q^t}$-linear maps coinciding on the basis $B$. Hence if $(\xi'_1,\psi)_B=(\xi'_2,\psi)_B$, then $\xi_1'=\xi_2'$, which proves the last part of the statement.
\end{proof}

We are now ready to define an action of the map $\Ff$ on the elements of $\GammaL(r,q^t)$.

\begin{definition} \label{def}Let $\xi$ be an element of $\GammaL(r,q^t)$. Define the action of $\Ff$ on an element of $\GammaL(r,q^t)$ as follows.
\begin{align*}
\Ff(\xi): \F_{q}^{rt}&\rightarrow \F_{q}^{rt}\\
\Ff(v)&\mapsto \Ff(\xi)(\Ff(v)):=\Ff(\xi v).
\end{align*}
\end{definition}

In other words, we define $\Ff(\xi)$ as the unique mapping which lets the following diagram commute.
\[
\begin{array}{clccc}
\Fqtr&&\xrightarrow{\xi}&&\Fqtr\\
\downarrow&\Ff& &\Ff&\downarrow\\
\F_{q}^{rt}&&\xrightarrow{\Ff(\xi)}&&\F_q^{rt}
\end{array}\]

\begin{theorem} \label{embedding}Let $\Ff: \Fqtr\to \F_q^{rt}$ be a vector field reduction map, and consider the action of $\Ff$ on elements of $\GammaL(r,q^t)$ as defined in Definition \ref{def}.
The map $\Ff$ defines an embedding of $\GammaL(r,q^t)$ in $\GammaL(rt,q)$, and an embedding of $\GL(r,q^t)$ in $\GL(rt,q)$.
\end{theorem}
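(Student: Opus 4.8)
The plan is to check the four things that make $\Ff$ an embedding of groups: (i) for each $\xi\in\GammaL(r,q^t)$ the map $\Ff(\xi)$ of Definition \ref{def} is a well-defined map $\F_q^{rt}\to\F_q^{rt}$; (ii) $\Ff(\xi)$ actually lies in $\GammaL(rt,q)$, i.e. it is additive, $\F_q$-semilinear and invertible; (iii) the assignment $\xi\mapsto\Ff(\xi)$ is a group homomorphism; (iv) it has trivial kernel. The statement for $\GL$ will then drop out as the special case where the field automorphism associated with $\xi$ is the identity. Step (i) is immediate: since $\Ff$ is a bijection, every vector of $\F_q^{rt}$ is $\Ff(v)$ for a unique $v\in\F_{q^t}^r$, so the rule $\Ff(\xi)(\Ff(v)):=\Ff(\xi v)$ defines a function without ambiguity.

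For (ii), write $\psi\in\Aut(\F_{q^t})$ for the automorphism associated with $\xi$. Additivity follows from the $\F_q$-linearity of $\Ff$ and the additivity of $\xi$: $\Ff(\xi)(\Ff(v)+\Ff(w))=\Ff(\xi)(\Ff(v+w))=\Ff(\xi(v+w))=\Ff(\xi v)+\Ff(\xi w)$. For semilinearity, the point to make carefully is that $\psi$ stabilises the subfield $\F_q$ of $\F_{q^t}$ and that its restriction $\psi_0:=\psi|_{\F_q}$ is again a field automorphism, this time of $\F_q$ (this is the only place where the discussion of subfields and field automorphisms in the text is really used). Granting this, for $\lambda\in\F_q$ one computes
\[
\Ff(\xi)(\lambda\,\Ff(v))=\Ff(\xi)(\Ff(\lambda v))=\Ff(\xi(\lambda v))=\Ff(\psi(\lambda)\,\xi v)=\psi_0(\lambda)\,\Ff(\xi v)=\psi_0(\lambda)\,\Ff(\xi)(\Ff(v)),
\]
using that $\psi(\lambda)=\psi_0(\lambda)\in\F_q$ and again the $\F_q$-linearity of $\Ff$. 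Finally, invertibility of $\Ff(\xi)$ is obtained by exhibiting its inverse: since $\xi^{-1}\in\GammaL(r,q^t)$, the defining relation gives $\Ff(\xi)\big(\Ff(\xi^{-1})(\Ff(v))\big)=\Ff(\xi\xi^{-1}v)=\Ff(v)$ and symmetrically, so $\Ff(\xi^{-1})=\Ff(\xi)^{-1}$ and $\Ff(\xi)\in\GammaL(rt,q)$.

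Steps (iii) and (iv) are short diagram chases. For the homomorphism property, for all $v$ we have $\Ff(\xi_1\xi_2)(\Ff(v))=\Ff(\xi_1\xi_2 v)=\Ff(\xi_1)(\Ff(\xi_2 v))=\Ff(\xi_1)\big(\Ff(\xi_2)(\Ff(v))\big)$, so $\Ff(\xi_1\xi_2)=\Ff(\xi_1)\Ff(\xi_2)$. For injectivity, if $\Ff(\xi)=\mathrm{id}$ then $\Ff(\xi v)=\Ff(v)$ for all $v$, hence $\xi v=v$ for all $v$ because $\Ff$ is injective, so $\xi=\mathrm{id}$; thus the kernel is trivial and $\Ff$ is an embedding $\GammaL(r,q^t)\hookrightarrow\GammaL(rt,q)$. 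For the last assertion, if $\xi\in\GL(r,q^t)$ then $\psi=\mathrm{id}$, hence $\psi_0=\mathrm{id}$ and the computation above shows $\Ff(\xi)$ is $\F_q$-linear, i.e. $\Ff(\xi)\in\GL(rt,q)$; restricting the embedding to $\GL(r,q^t)$ gives an embedding into $\GL(rt,q)$.

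The only step that is more than bookkeeping is the semilinearity verification, and within it the observation that the automorphism $\psi$ of $\F_{q^t}$ restricts to an automorphism of the subfield $\F_q$ — this is exactly what upgrades $\Ff(\xi)$ from a merely additive map to an honest element of $\GammaL(rt,q)$. Everything else is a straightforward consequence of $\Ff$ being an $\F_q$-linear bijection and of the commutativity of the diagram following Definition \ref{def}.
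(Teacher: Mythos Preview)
Your proof is correct and follows essentially the same approach as the paper's: both verify additivity and $\F_q$-semilinearity of $\Ff(\xi)$ via the $\F_q$-linearity of $\Ff$ and the fact that $\psi$ sends $\F_q$ into itself, then check that $\xi\mapsto\Ff(\xi)$ is an injective homomorphism by unwinding the defining relation $\Ff(\xi)(\Ff(v))=\Ff(\xi v)$. Your version is a touch more explicit in two places (you exhibit $\Ff(\xi^{-1})$ as the inverse of $\Ff(\xi)$, and you name $\psi_0=\psi|_{\F_q}$ as the associated automorphism of $\Ff(\xi)$), but these are cosmetic rather than substantive differences.
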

\begin{proof} Let $\xi$ be an element of $\GammaL(r,q^t)$ with associated field automorphism $\psi$. Then we first show that $\Ff(\xi)$ is an element of $\GammaL(rt,q)$. We have that 
$\Ff(\xi)(\Ff(v)+\Ff(v'))=\Ff(\xi(v+v'))=\Ff(\xi(v)+\xi(v'))=\Ff(\xi(v))+\Ff(\xi(v'))=\Ff(\xi)(\Ff(v))+\Ff(\xi)(\Ff(v'))$ and hence, $\Ff(\xi)$ is additive. Moreover, $\Ff(\xi)(\l \Ff(v))$ with $\l\in \F_q$, equals $\Ff(\xi)(\Ff(\lambda v))=\Ff(\xi(\l v))=\Ff(\psi(\l)\xi(v))$. Now, as $\l \in \F_q$, $\psi(\l)\in \F_q$, so $$\Ff(\psi(\l)\xi(v))=\psi(\l)\Ff(\xi(v))$$
and
$$\Ff(\xi)(\l \Ff(v))=\psi(\l)\Ff(\xi)(\Ff(v)).$$ This implies that $\Ff(\xi)$ is indeed an element of $\GammaL(rt,q)$. 
If $\Ff(\xi_1)=\Ff(\xi_2)$, then $\Ff(\xi_1)(\Ff(v))=\Ff(\xi_2)(\Ff(v))$ for all $v \in \Fqtr$, and hence, $\Ff(\xi_1(v))=\Ff(\xi_2(v))$ for all $v \in \Fqtr$. This implies that $\xi_1$ and $\xi_2$ coincide on $\Fqtr$, and that $\Ff$ defines an injection of $\GammaL(r,q^t)$ in $\GammaL(rt,q)$.

Finally, we see that $\Ff(\xi_2\xi_1)(\Ff(v))=\Ff((\xi_2 \xi_1)(v))=\Ff(\xi_2(\xi_1(v)))=\Ff(\xi_2)(\Ff(\xi_1(v)))=\Ff(\xi_2)(\Ff(\xi_1)(\Ff(v)))=(\Ff(\xi_2) \Ff(\xi_1))(\Ff(v)).$ This shows that $\Ff$ is compatible with the multiplication in $\GammaL(r,q^t)$, which is the composition of maps, and hence, $\Ff$ is an embedding. The last part follows by observing that, if $\xi$ is an element of $\GL(r,q^t)$, then $\psi=id$, and $\Ff(\xi)=\Ff(\xi')$ is an element of $\GL(rt,q)$.
\end{proof}

\subsection{The stabiliser of the Desarguesian vector space partition}
In this subsection, we determine the stabiliser of the Desarguesian vector space partition in $\F_q^{rt}$ obtained from $\Ff: \Fqtr\to \F_q^{rt}$. We start with the elementwise stabiliser, i.e. those group elements which leave every vector subspace of the vector space partition invariant.
\begin{theorem}\label{vectorelementwise} Let $\Ff: \Fqtr\to \F_q^{rt}$ be a vfr-map. Suppose that $\phi$ is an element of $\GammaL(rt,q)$ stabilising the Desarguesian vector space partition of $\F_q^{rt}$ obtained from $\Ff$ elementwise. Then $\phi=\Ff(m_\beta)$, for some $\beta\in \F_{q^t}^*$, where $m_\beta$ is the element of $\GL(r,q^t)$ representing the multiplication with $\beta$, i.e. mapping a vector $v$ onto  the vector $\beta v$.
\end{theorem}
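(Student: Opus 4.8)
\emph{Proof plan.} The idea is to transport $\phi$ back to the vector space $\Fqtr$ through $\Ff$ and show that the resulting map must be a scalar multiplication. Since $\Ff$ is an $\F_q$-linear bijection, the map $\xi:=\Ff^{-1}\circ\phi\circ\Ff$ is a well-defined additive bijection of $\Fqtr$ onto itself (a priori only $\F_q$-semilinear when $\Fqtr$ is viewed over $\F_q$, as in Theorem \ref{embedding} read in reverse; in fact we shall only use its additivity). By Definition \ref{def}, proving $\xi=m_\beta$ is exactly the same as proving $\phi=\Ff(m_\beta)$. Writing $\langle v\rangle$ for the $\F_{q^t}$-line $\{\alpha v\mid\alpha\in\F_{q^t}\}$ and noting that the member of the Desarguesian vector space partition through $\Ff(v)$ is $S_v=\Ff(\langle v\rangle)$ by Lemma \ref{partition}, the hypothesis that $\phi$ fixes every member of the partition setwise translates, after applying $\Ff^{-1}$, into the single statement: $\xi(\langle v\rangle)=\langle v\rangle$ for every $v\in\F_{q^t}^{r*}$.

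First I would fix an $\F_{q^t}$-basis $e_1,\dots,e_r$ of $\Fqtr$; this is the point where the hypothesis $r\ge 2$ enters. For each $i$, the relation $\xi(\langle e_i\rangle)=\langle e_i\rangle$ yields an additive bijection $g_i\colon\F_{q^t}\to\F_{q^t}$ with $\xi(\mu e_i)=g_i(\mu)e_i$; put $\beta_i:=g_i(1)\in\F_{q^t}^*$, so $\xi(e_i)=\beta_i e_i$. Now comes the key step: for $i\ne j$ and nonzero $\mu_1,\mu_2\in\F_{q^t}$, additivity of $\xi$ gives
\[
\xi(\mu_1 e_i+\mu_2 e_j)=g_i(\mu_1)e_i+g_j(\mu_2)e_j,
\]
and since the left-hand side must lie in $\langle\mu_1 e_i+\mu_2 e_j\rangle$ while $e_i,e_j$ are $\F_{q^t}$-independent, there is a scalar $c\in\F_{q^t}$ (depending on $\mu_1,\mu_2$) with $g_i(\mu_1)=c\mu_1$ and $g_j(\mu_2)=c\mu_2$; injectivity of $\xi$ forces $c\ne 0$, whence $g_i(\mu_1)/\mu_1=g_j(\mu_2)/\mu_2$. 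Taking $\mu_1=\mu_2=1$ shows that all the $\beta_i$ agree, with a common value $\beta\in\F_{q^t}^*$; keeping $\mu_1$ free and setting $\mu_2=1$ then gives $g_i(\mu_1)=\beta\mu_1$ for every $\mu_1\in\F_{q^t}$ and every $i$. Hence $\xi(\mu e_i)=\beta\mu e_i$ for all $i$ and all $\mu\in\F_{q^t}$.

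Finally, for an arbitrary $v=\sum_{i=1}^r\mu_i e_i$, additivity of $\xi$ yields $\xi(v)=\sum_i\xi(\mu_i e_i)=\sum_i\beta\mu_i e_i=\beta v$, so $\xi=m_\beta$ and therefore $\phi=\Ff(m_\beta)$; since $m_\beta$ is $\F_{q^t}$-linear, this $\phi$ in fact lies in $\GL(rt,q)$. I expect the only genuinely delicate point to be the key step above, namely extracting from the ``every line is fixed'' condition that the scalar by which $\xi$ acts is independent both of the coordinate direction $e_i$ and of the input $\mu$; this rests essentially on having at least two basis vectors, i.e.\ on $r\ge 2$ (for $r=1$ the partition has a single member, which every element of $\GammaL(t,q)$ stabilises, so a restriction of this kind is unavoidable). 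Everything else is routine bookkeeping: the translation between $\Fqtr$ and $\F_q^{rt}$ via $\Ff$, together with additivity.
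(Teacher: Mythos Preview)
Your proof is correct and follows essentially the same approach as the paper's: both transport $\phi$ back to $\Fqtr$ via $\Ff$, use additivity together with the condition that every $\F_{q^t}$-line is fixed setwise, and exploit a pair of $\F_{q^t}$-linearly independent vectors to force the scalar to be universal. Your explicit remark that the argument genuinely requires $r\ge 2$ is apt and worth keeping; the paper's proof also relies on this implicitly when it chooses linearly independent $v,w$.
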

\begin{proof} As $\phi$ stabilises the Desarguesian vector space partition elementwise, we have that for all $v\in \F_{q^t}^{r*}$
$$\{ \phi(\Ff(\alpha v))\mid \alpha \in \F_{q^t}\}=\{\Ff(\alpha'v)\mid \alpha' \in \F_{q^t}\}.$$
For each $v\in \F_{q^t}^{r*}$, define $\psi_v(\alpha)$ as follows:
$$\phi(\Ff(\alpha v))=\Ff(\psi_v(\alpha) v).$$

Consider now the equality $\phi(\Ff((\alpha+\alpha')v))=\Ff(\psi_v(\alpha+\alpha')v)$. The left hand side of this equation equals $\phi\Ff(\alpha v)+\phi\Ff(\alpha' v)=\Ff((\psi_v(\alpha)+\psi_v(\alpha'))v)$, which yields that $\psi_v(\alpha+\alpha')=\psi_v(\alpha)+\psi_v(\alpha')$. 
Now let $v$ and $w$ be linearly independent in $\Fqtr$ and consider $\phi(\Ff(\alpha(v+w)))$. We obtain that $\psi_{v+w}(\alpha)v+\psi_{v+w}(\alpha)w=\psi_v(\alpha)v+\psi_w(\alpha)w$. Since $v$ and $w$ are linearly independent, this shows that $\psi_{v+w}(\alpha)=\psi_v(\alpha)=\psi_w(\alpha)$. As $\beta v$ and $w$, $\beta \in \F_{q^t}$ are linearly independent over $\F_{q^t}$, this argument also shows that $\psi_{\beta v}(\alpha)=\psi_w(\alpha)=\psi_v(\alpha)$ and we find that $\psi_v(\alpha)=\psi_w(\alpha)$ for all $v,w\in \F_{q^t}^{r*}$. Put $\psi=\psi_v$.

Consider the equality $\phi(\Ff(\beta \alpha v))=\Ff(\psi_{\alpha v}(\beta) \alpha v)=\Ff(\psi_{v}(\beta \alpha) v)$. Since $\psi_{\alpha v}=\psi_v=\psi$, $\psi (\beta\alpha)=\alpha\psi(\beta)$ for all $\alpha, \beta\in \F_q^t$. In particular, $\psi(\alpha)=\psi(1)\alpha$ for all $\alpha$.

We see that $\phi(\Ff(v)=\Ff(\psi(1)v)$ for all $v\in \Fqtr$. So if we define $\xi$ to be the mapping $m_{\psi(1)}$, then indeed $\Ff(\xi)=\phi$ since $\Ff(\xi)(\Ff(v))=\Ff(\xi(v))=\Ff(\psi(1)v)=\phi(\Ff(v))$ for all $v\in \F_{q^t}^r$ which concludes the proof.
\end{proof}

\begin{corollary} The elementwise stabiliser in $\GammaL(rt,q)$ of the Desarguesian vector space partition of $\F_q^{rt}$, obtained from $\Ff: \Fqtr\to \F_q^{rt}$,  is contained in $\GL(rt,q)$ and is isomorphic to $\F_{q^t}^\ast$ (the multiplicative group of $\F_{q^t}$). 
\end{corollary}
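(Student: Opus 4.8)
The plan is to deduce the corollary directly from Theorem \ref{vectorelementwise} together with the embedding properties of $\Ff$ established in Theorem \ref{embedding}. First I would invoke Theorem \ref{vectorelementwise}: every element $\phi$ of the elementwise stabiliser in $\GammaL(rt,q)$ equals $\Ff(m_\beta)$ for some $\beta\in\F_{q^t}^*$. Since $m_\beta\in\GL(r,q^t)$ and, by Theorem \ref{embedding}, $\Ff$ maps $\GL(r,q^t)$ into $\GL(rt,q)$, it follows at once that the elementwise stabiliser is contained in $\GL(rt,q)$.

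Next I would verify the converse inclusion, namely that each $\Ff(m_\beta)$ with $\beta\in\F_{q^t}^*$ genuinely stabilises the Desarguesian vector space partition elementwise; Theorem \ref{vectorelementwise} only supplies the reverse direction, so this step should not be skipped. It is a one-line computation: for $v\in\F_{q^t}^{r*}$ and $\alpha\in\F_{q^t}$, Definition \ref{def} gives $\Ff(m_\beta)(\Ff(\alpha v))=\Ff(m_\beta(\alpha v))=\Ff((\beta\alpha) v)$, so the component $\{\Ff(\alpha v)\mid \alpha\in\F_{q^t}\}$ of the partition is mapped onto itself. Hence the elementwise stabiliser is exactly $\{\Ff(m_\beta)\mid \beta\in\F_{q^t}^*\}$.

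Finally I would identify this set as a group isomorphic to $\F_{q^t}^*$ by examining the map $\beta\mapsto\Ff(m_\beta)$. Since $m_\beta m_{\beta'}=m_{\beta\beta'}$ and $\Ff$ respects composition of maps (Theorem \ref{embedding}), this map is a group homomorphism from $\F_{q^t}^*$ onto the elementwise stabiliser; it is injective because $\Ff$ is an embedding and $m_\beta=m_{\beta'}$ forces $\beta=\beta'$ (evaluate both sides on any nonzero vector). Therefore $\beta\mapsto\Ff(m_\beta)$ is an isomorphism, which is exactly the claim. The main obstacle here is essentially nil: all the substantive work is in Theorems \ref{vectorelementwise} and \ref{embedding}, and the only point requiring care is the bookkeeping in the previous paragraph, namely checking the forward direction so that the stabiliser is pinned down precisely rather than merely bounded above.
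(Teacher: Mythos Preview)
Your proof is correct and follows essentially the same approach as the paper: invoke Theorem \ref{vectorelementwise} to see that every elementwise-stabilising map is $\Ff(m_\beta)$, observe that such maps lie in $\GL(rt,q)$, check the easy converse direction, and conclude that $\beta\mapsto\Ff(m_\beta)$ is a group isomorphism onto the stabiliser. The paper's version is terser (it says the converse ``clearly'' holds and that each $\beta$ gives a ``unique'' $\Ff(m_\beta)$), but your more explicit treatment of the converse inclusion and of injectivity is a welcome clarification rather than a different route.
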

\begin{proof} An element $\phi$ in $\GammaL(rt,q)$ stabilising the Desarguesian vector space partition is of the form $\Ff(m_\beta)$, and hence, is an $\F_q$-linear map, i.e. $\phi\in \GL(rt,q)$. As every $\beta\in \F_{q^t}^\ast$ gives rise to a unique element $\Ff(m_\beta)$ that clearly stabilises the Desarguesian vector space partition elementswise, the statement follows.
\end{proof}

We now turn our attention to the setwise stabiliser of the Desarguesian vector space partition.

\begin{lemma} \label{lem2}Let $\phi$ be an element of $\GammaL(rt,q)$ stabilising the Desarguesian vector space partition of $\F_q^{rt}$ obtained from the field reduction map $\Ff: \Fqtr\to \F_q^{rt}$. Let, for $v\in \F_{q^t}^{r*}$ and $\alpha\in \F_{q^t}$, $\psi_v(\alpha)$ be the element satisfying $$\phi(\Ff(\alpha v))=\Ff(\psi_v(\alpha)\Ff^{-1}\phi\Ff(v)).$$ Then 
$\psi_v=\psi_w$ for all $v,w\in \F_{q^t}^{r*}$ and $\psi:=\psi_v$ is an element of $\Aut(\F_{q^t})$. If $\phi$ is in $\GL(rt,q)$, then $\psi$ fixes each element of $\F_q$.
\end{lemma}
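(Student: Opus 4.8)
The plan is to adapt the additivity/multiplicativity computation from the proof of Theorem~\ref{vectorelementwise}, the only genuinely new point being that here $\phi$ permutes the members of the Desarguesian vector space partition instead of fixing each of them, so the image members have to be carried along. Throughout, for $v\in\Fqtrs$ write $S_v=\{\Ff(\alpha v)\mid\alpha\in\F_{q^t}\}$ and $v':=\Ff^{-1}\phi\Ff(v)$, and recall from Lemma~\ref{partition} that $S_v=S_w$ if and only if $v,w$ are $\F_{q^t}$-proportional. First I would check that $\psi_v$ is well defined: since $\phi$ stabilises the partition and $\Ff(v)\in S_v$, the vector $\phi\Ff(v)=\Ff(v')$ lies in the partition member $\phi(S_v)$, and since $v'\neq 0$ that member is exactly $S_{v'}$; hence $\phi\Ff(\alpha v)\in S_{v'}$ for every $\alpha$, so there is a unique $\psi_v(\alpha)\in\F_{q^t}$ with $\phi\Ff(\alpha v)=\Ff(\psi_v(\alpha)v')$ (uniqueness from injectivity of $\Ff$ and $v'\neq 0$), which is the map in the statement; moreover $\psi_v(\alpha)\neq 0$ for $\alpha\neq 0$ since $\phi\Ff$ is injective. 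Applying $\phi\Ff$ to $(\alpha+\alpha')v=\alpha v+\alpha'v$ and using additivity of $\phi$ and $\Ff$ yields $\psi_v(\alpha+\alpha')=\psi_v(\alpha)+\psi_v(\alpha')$, so each $\psi_v$ is additive.

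Next I would prove $\psi_v$ does not depend on $v$. Choose $v,w\in\Fqtrs$ that are $\F_{q^t}$-linearly independent (possible since $r\geq 2$). Additivity of $\phi\Ff$ gives $\phi\Ff(v+w)=\Ff(v'+w')$, so $(v+w)'=v'+w'$; and since $S_v\neq S_w$ and $\phi$ permutes the partition, $S_{v'}=\phi(S_v)\neq\phi(S_w)=S_{w'}$, so $v'$ and $w'$ are not $\F_{q^t}$-proportional, hence $\F_{q^t}$-linearly independent (in particular $v'+w'\neq 0$). Computing $\phi\Ff(\alpha(v+w))$ in two ways now gives $\Ff\bigl(\psi_{v+w}(\alpha)(v'+w')\bigr)=\Ff\bigl(\psi_v(\alpha)v'+\psi_w(\alpha)w'\bigr)$, and the linear independence of $v',w'$ forces $\psi_v(\alpha)=\psi_{v+w}(\alpha)=\psi_w(\alpha)$. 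Since (for $r\geq 2$) any two nonzero vectors can be linked through a third that is linearly independent from both, this gives $\psi_v=\psi_w$ for all $v,w\in\Fqtrs$; put $\psi:=\psi_v$.

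It remains to show $\psi\in\Aut(\F_{q^t})$ and to treat the $\GL$ case. Additivity of $\psi$ is already established. For multiplicativity, fix $v$ and $\alpha\neq 0$ and set $u=\alpha v$; then $u'=\Ff^{-1}\phi\Ff(\alpha v)=\Ff^{-1}\Ff(\psi(\alpha)v')=\psi(\alpha)v'$, so for every $\beta$ we get $\phi\Ff(\beta\alpha v)=\phi\Ff(\beta u)=\Ff(\psi_u(\beta)u')=\Ff\bigl(\psi(\beta)\psi(\alpha)v'\bigr)$, while directly $\phi\Ff((\beta\alpha)v)=\Ff(\psi(\beta\alpha)v')$; as $v'\neq 0$ this yields $\psi(\alpha\beta)=\psi(\alpha)\psi(\beta)$ (the cases $\alpha=0$ or $\beta=0$ being trivial since $\psi(0)=0$). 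Thus $\psi$ is additive and multiplicative, it has trivial kernel by the observation above that $\psi_v(\alpha)\neq 0$ for $\alpha\neq 0$, hence $\psi\neq 0$, hence $\psi(1)=1$, so $\psi$ is a field homomorphism of the finite field $\F_{q^t}$ and therefore bijective, i.e.\ $\psi\in\Aut(\F_{q^t})$. Finally, if $\phi\in\GL(rt,q)$ then for $\l\in\F_q$ we have $\Ff(\l v)=\l\Ff(v)$, so $\Ff(\psi(\l)v')=\phi\Ff(\l v)=\l\phi\Ff(v)=\l\Ff(v')=\Ff(\l v')$, whence $\psi(\l)=\l$ and $\psi$ fixes $\F_q$ pointwise.

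The only real obstacle is the bookkeeping around $v'=\Ff^{-1}\phi\Ff(v)$: verifying that it is a legitimate representative of the member $\phi(S_v)$, and --- the one step with genuine content --- that $\F_{q^t}$-linear independence of $v,w$ transfers to $v',w'$ because $\phi$ permutes the partition members. Once this is in hand, the manipulations are exactly parallel to those in the proof of Theorem~\ref{vectorelementwise}.
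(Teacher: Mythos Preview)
Your proof is correct and follows essentially the same route as the paper's own argument: well-definedness of $\psi_v$, additivity, independence of $\psi_v$ from $v$ via a linearly independent pair, multiplicativity, and the $\GL$ case are handled in the same way. The one noteworthy difference is that you obtain the $\F_{q^t}$-linear independence of $v',w'$ directly from the fact that $\phi$ permutes the partition members (so $S_{v'}=\phi(S_v)\neq\phi(S_w)=S_{w'}$), whereas the paper first proves surjectivity of $\psi_v$ and then uses it to derive this independence; your shortcut makes the separate surjectivity step unnecessary, and bijectivity of $\psi$ then falls out at the end from finiteness.
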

\begin{proof}
We will prove this lemma in several steps. 
\begin{enumerate}
\item {\bfseries\boldmath$ \psi_v(\alpha)$ is well defined.} Fix an element $v\neq 0$ in $\F_{q^t}^r$. It is clear that $\psi_v(1)=1$ since $\phi(\Ff(v))=\Ff(1.\Ff^{-1}\phi\Ff(v))$. Put $z_v:=\Ff^{-1}\phi\Ff(v)$, then we see that $\phi\Ff(v)=\Ff(z_v)$. As $\phi$ stabilises the Desarguesian vector space partition, we obtain that there is a unique element $\psi_v(\alpha)$ satisfying
$$\phi(\Ff(\alpha v))=\Ff(\psi_v(\alpha)z_v)=\Ff(\psi_v(\alpha)\Ff^{-1}\phi\Ff(v)).$$

\item {\bfseries\boldmath$\psi_v$ is additive.} We see that $\phi(\Ff(\alpha v)+\Ff(\alpha' v))=\phi(\Ff((\alpha+\alpha')v)=\Ff(\psi_v(\alpha+\alpha')z_v)$. On the other hand, $\phi(\Ff(\alpha v)+\Ff(\alpha' v))=\phi(\Ff(\alpha v))+\phi(\Ff(\alpha' v))=\Ff(\psi_v(\alpha)z_v)+\Ff(\psi_v(\alpha')z_v)=\Ff((\psi_v(\alpha)+\psi_v(\alpha'))z_v)$. This implies that $\psi_v(\alpha+\alpha')=\psi_v(\alpha)+\psi_v(\alpha')$.
\item {\bfseries\boldmath$\psi_v(\alpha \beta)=\psi_{\beta v}(\alpha)\psi_v(\beta)$
.} We see that $\phi(\Ff(\alpha \beta v))=$ $\Ff(\psi_{\beta v}(\alpha)\Ff^{-1}\phi\Ff(\beta v))=$ $\Ff(\psi_{\beta v}(\alpha)\Ff^{-1}\Ff\psi_v(\beta)\Ff^{-1}\phi\Ff(v))=\Ff(\psi_{\beta v}(\alpha)\psi_v(\beta)\Ff^{-1}\phi\Ff(v))$. On the other hand, $\phi(\Ff(\alpha \beta v))=\Ff(\psi_v(\alpha \beta)\Ff^{-1}\phi\Ff(v))$. Hence, $\psi_v(\alpha \beta)=\psi_{\beta v}(\alpha)\psi_v(\beta)$ for all $\alpha,\beta\in \F_{q^t}$.
\item {\bfseries\boldmath$\psi_v$ is surjective.} Let $\beta\in \F_{q^t}$. Since $\phi$ is an element of $\GammaL(rt,q)$, $\phi$ is surjective and hence, there is an element $w\in \F_q^{rt}$ such that $\phi(w)=\Ff(\beta\Ff^{-1}\phi\Ff(v))$. Now $\phi$ is an element of the stabiliser of the Desarguesian vector space partition so if we put $z=\Ff^{-1}\phi\Ff(v)$, then, since $\phi\Ff(v)=\Ff(z)$, $\phi(\Ff(\alpha v))=\Ff(\beta z)$ for some $\alpha\in \F_{q^t}$. Hence, $w=\Ff(\alpha v)$ and $\psi_v(\alpha)=\beta$.
\item {\bfseries\boldmath$\psi_v=\psi_w$ for all $v,w$ in $\F_{q^t}^{r*}$}. Let $v$ and $w$ be linearly independent vectors in $\Fqtr$. Consider the equality 
$$\phi(\Ff(\alpha(v+w)))=\phi(\Ff(\alpha v))+\phi(\Ff(\alpha w)).$$ The left hand side equals $\Ff(\psi_{v+w}(\alpha)\Ff^{-1}\phi\Ff(v)+\psi_{v+w}(\alpha)\Ff^{-1}\phi\Ff(w))$ while the right hand side equals $\Ff(\psi_v(\alpha)\Ff^{-1}\phi\Ff(v)+\psi_w(\alpha)\Ff^{-1}\phi\Ff(w))$. This implies that 
\begin{align}
(\psi_{v+w}(\alpha)-\psi_v(\alpha))\Ff^{-1}\phi\Ff(v)=(\psi_w(\alpha)-\psi_{v+w}(\alpha))\Ff^{-1}\phi\Ff(w).\label{eq1}
\end{align}
Now the vectors $\Ff^{-1}\phi\Ff(v)$ and $\Ff^{-1}\phi\Ff(w)$ are linearly independent: suppose to the contrary that $\Ff^{-1}\phi\Ff(w)=\beta\Ff^{-1}\phi\Ff(v)$ for some $\beta\in \F_{q^t}$, then by (4), there is an $\alpha\in \F_{q^t}$ such that $\beta=\psi_v(\alpha)$, and hence, $\Ff^{-1}\phi\Ff(w)=\psi_v(\alpha)\Ff^{-1}\phi\Ff(v)=\Ff^{-1}\phi\Ff(\alpha v)$ from which it follows that $w=\alpha v$, a contradiction. By Equation (\ref{eq1}), this implies that $\psi_v(\alpha)=\psi_w(\alpha)$ for all linearly independent vectors, $v,w$. Since $\gamma v$ and $w$ are also linearly independent, we find that $\psi_{\gamma v}=\psi_w=\psi_v$, and hence $\psi_v$ is the same map for all non-zero vectors $v$ of $\Fqtr$. 
\item {\bfseries\boldmath$\psi=\psi_v$ is an element of $\Aut (\F_{q^t})$.} Put $\psi=\psi_v$, then we see from (3), namely $\psi_v(\alpha \beta)=\psi_{\beta v}(\alpha)\psi_v(\beta)$ that $\psi(\alpha \beta)=\psi(\alpha)\psi(\beta)$, so $\psi_v$ is multiplicative. As $\psi$ is additive by (2), $\psi$ is an element of $\Aut(\F_{q^t})$.
\item Finally, we will show that {\bfseries\boldmath $\psi(\l)=\l$ for $\l\in \F_q$ if $\phi\in \GL(rt,q)$}. We have on one hand that $\phi(\Ff(\l v))=\phi(\l\Ff(v))=\l \phi\Ff(v)$ since $\phi$ in $\GL(rt,q)$ and on the other hand, $\phi(\Ff(\l v))=\Ff(\psi(\l)\Ff^{-1}\phi\Ff(v))$. By letting $\Ff^{-1}$ act on both sides, we obtain that $\Ff^{-1}(\l\phi\Ff(v))=\psi(\l)\Ff^{-1}\phi\Ff(v)$. Since $\Ff$ is an $\F_q$-linear map, $\Ff^{-1}$ is an $\F_q$-linear map, so $\Ff^{-1}(\l\phi\Ff(v))=\l\Ff^{-1}(\phi\Ff(v))$. This implies that $\psi(\l)=\l$.
\qedhere
\end{enumerate}
\end{proof}

\begin{lemma}\label{omgekeerd} Let $\Ff: \Fqtr\to \F_q^{rt}$ be a vfr-map. Let $\phi=\Ff(\xi)$ for some $\xi$ in $\GammaL(r,q^t)$, then $\phi$ stabilises the Desarguesian vector space partition of $\F_q^{rt}$ obtained from $\Ff$.
\end{lemma}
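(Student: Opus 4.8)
The plan is to compute directly how $\phi=\Ff(\xi)$ acts on a single member of the Desarguesian vector space partition and to observe that the image is again a member of that partition. Recall from Lemma \ref{partition} that the partition consists of the $t$-dimensional $\F_q$-subspaces $S_v:=\{\Ff(\alpha v)\mid \alpha\in\F_{q^t}\}$, one for each $v\in\Fqtrs$, and that $S_v=S_w$ precisely when $v$ and $w$ are $\F_{q^t}$-proportional. Let $\psi\in\Aut(\F_{q^t})$ be the field automorphism associated with $\xi$, so that $\xi(\alpha v)=\psi(\alpha)\,\xi(v)$ for all $\alpha\in\F_{q^t}$ and all $v\in\Fqtr$ (cf.\ Lemma \ref{lem10}).

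First I would fix $v\in\Fqtrs$ and, using Definition \ref{def} together with the semilinearity of $\xi$, write
\[
\phi(\Ff(\alpha v))=\Ff(\xi(\alpha v))=\Ff\bigl(\psi(\alpha)\,\xi(v)\bigr)
\]
for every $\alpha\in\F_{q^t}$, so that $\phi(S_v)=\{\Ff(\psi(\alpha)\xi(v))\mid\alpha\in\F_{q^t}\}$. Next I would invoke that $\psi$ is a bijection of $\F_{q^t}$, hence $\psi(\alpha)$ runs through all of $\F_{q^t}$ as $\alpha$ does; therefore $\phi(S_v)=\{\Ff(\beta\,\xi(v))\mid\beta\in\F_{q^t}\}=S_{\xi(v)}$. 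Since $\xi$ is invertible and $v\neq 0$, we have $\xi(v)\in\Fqtrs$, so $S_{\xi(v)}$ is genuinely a member of the Desarguesian vector space partition. As $v$ was arbitrary and $\phi$ is a bijection (Theorem \ref{embedding}), it follows that $\phi$ permutes the members of the partition, i.e.\ stabilises it setwise.

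I do not expect any real obstacle here; the statement is essentially the converse bookkeeping to the computation underlying Theorem \ref{embedding}. The only point deserving a word of justification is that $\psi$ being an automorphism—in particular surjective—is what makes $\{\psi(\alpha)\mid\alpha\in\F_{q^t}\}$ equal to all of $\F_{q^t}$; even without explicitly invoking surjectivity one gets $\phi(S_v)\subseteq S_{\xi(v)}$, and then equality follows by counting since both sides are $\F_q$-subspaces of the same finite cardinality $q^t$. If desired, one could also record for later use the explicit bijection $S_v\mapsto S_{\xi(v)}$ that $\phi$ induces on the members of the partition, but this is not needed for the statement itself.
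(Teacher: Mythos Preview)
Your proof is correct and follows exactly the same computation as the paper: apply Definition \ref{def} and the semilinearity of $\xi$ to obtain $\phi(\Ff(\alpha v))=\Ff(\psi(\alpha)\xi(v))$, then conclude that the image of a partition member $\{\Ff(\alpha v)\mid\alpha\in\F_{q^t}\}$ is again of the form $\{\Ff(\beta w)\mid\beta\in\F_{q^t}\}$. You include a few extra remarks (surjectivity of $\psi$, $\xi(v)\neq 0$, the counting alternative) that the paper leaves implicit, but the approach is identical.
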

\begin{proof} Suppose that $\psi$ is the field autormorphism associated with $\xi$. Since $\phi=\Ff(\xi)$ we have that $\Ff(\xi)(\Ff(\alpha v))=\Ff(\xi(\alpha v))=\Ff(\psi(\alpha)\xi(v))$ with $v\in \Fqtr$. This implies that $\{\Ff(\xi)(\Ff(\alpha v))\mid \alpha\in \F_{q^t}\}=\{\Ff(\beta w)\mid\beta\in \F_{q^t}\}$ for some $w$ in $\Fqtr$.
\end{proof}

\begin{theorem}\label{stabiliservector} Let $\Ff: \Fqtr\to \F_q^{rt}$ be a vfr-map. Let $\phi$ be an element of $\GammaL(rt,q)$ stabilising the Desarguesian vector space partition of $\F_q^{rt}$ obtained from $\Ff$, then 
$\phi=\Ff(\xi)$ for some $\xi$ in $\GammaL(r,q^t)$. If $\phi$ is an element of $\GL(rt,q)$ then the field automorphism $\psi\in \Aut(\F_{q^t})$ associated with $\xi$ fixes every element of $\F_q$.
\end{theorem}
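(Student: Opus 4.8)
The plan is to obtain $\xi$ simply by conjugating $\phi$ by the bijection $\Ff$: define $\xi\colon\F_{q^t}^r\to\F_{q^t}^r$ by $\xi(v):=\Ff^{-1}\phi\Ff(v)$, regarding $\Ff$ here as a bijection of underlying sets. Since $\Ff$ and $\phi$ are bijections, $\xi$ is a bijection and $\xi(0)=0$; the task is then to check that $\xi$ is semilinear with respect to the automorphism $\psi$ furnished by Lemma \ref{lem2}, and that, under the identification in Definition \ref{def}, $\Ff(\xi)$ recovers $\phi$.

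First I would check additivity. For $v,w\in\F_{q^t}^r$ one has
$$\xi(v+w)=\Ff^{-1}\phi\Ff(v+w)=\Ff^{-1}\phi\bigl(\Ff(v)+\Ff(w)\bigr)=\Ff^{-1}\bigl(\phi\Ff(v)+\phi\Ff(w)\bigr)=\xi(v)+\xi(w),$$
using that $\Ff$ and $\Ff^{-1}$ are $\F_q$-linear, hence additive, and that $\phi\in\GammaL(rt,q)$ is additive. Next, semilinearity is read off from Lemma \ref{lem2}: that lemma produces a single $\psi\in\Aut(\F_{q^t})$ with $\phi(\Ff(\alpha v))=\Ff\bigl(\psi(\alpha)\Ff^{-1}\phi\Ff(v)\bigr)=\Ff\bigl(\psi(\alpha)\xi(v)\bigr)$ for every $v\in\F_{q^t}^{r*}$ and every $\alpha\in\F_{q^t}$; applying $\Ff^{-1}$ gives $\xi(\alpha v)=\psi(\alpha)\xi(v)$, and the cases $v=0$ or $\alpha=0$ hold trivially since every map in sight fixes $0$. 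Thus $\xi$ is a bijective (hence invertible) semilinear map, i.e.\ $\xi\in\GammaL(r,q^t)$, with associated field automorphism $\psi$.

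It remains to identify $\phi$ with $\Ff(\xi)$. By Definition \ref{def}, for every $v\in\F_{q^t}^r$ we have $\Ff(\xi)(\Ff(v))=\Ff(\xi v)=\Ff\bigl(\Ff^{-1}\phi\Ff(v)\bigr)=\phi(\Ff(v))$; since $\Ff$ is onto, every vector of $\F_q^{rt}$ is of the form $\Ff(v)$, so $\Ff(\xi)=\phi$. For the final assertion, if moreover $\phi\in\GL(rt,q)$ then the last step of Lemma \ref{lem2} has already shown that this very $\psi$ fixes every element of $\F_q$, which is precisely the claim about the automorphism associated with $\xi$.

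I do not expect a genuine obstacle here: Lemma \ref{lem2} has done the real work of producing $\psi$ and proving it independent of $v$, so what remains is the bookkeeping remark that conjugation by $\Ff$ converts the hypothesis ``$\phi$ stabilises the Desarguesian vector space partition'' into exactly the statement that $\xi$ is $\psi$-semilinear. The only point requiring a moment's care is that the defining identity of Lemma \ref{lem2} is stated for nonzero $v$, so the cases $v=0$ and $\alpha=0$ must be checked separately, which is immediate.
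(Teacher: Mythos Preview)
Your proposal is correct and follows essentially the same route as the paper: both rely on Lemma~\ref{lem2} to produce the automorphism $\psi$, define $\xi$ so that $\xi(v)=\Ff^{-1}\phi\Ff(v)$, verify semilinearity, and check $\Ff(\xi)=\phi$. The only cosmetic difference is that the paper writes the definition as $\xi(\alpha v)=\psi(\alpha)\Ff^{-1}\phi\Ff(v)$ and then has to check this is well defined (independent of the factorisation of a vector as $\alpha v$), whereas your direct composition $\xi=\Ff^{-1}\phi\Ff$ is automatically well defined and makes that step unnecessary.
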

\begin{proof} Using the definitions of Lemma \ref{lem2}, we obtain that for all $v \in \Fqtr$, $\phi(\Ff(\alpha v))=\Ff(\psi(\alpha)\Ff^{-1}\phi\Ff(v))$. Put 
\begin{eqnarray*}
\xi:\Fqtr&\rightarrow&\Fqtr\\
\alpha v&\mapsto& \psi(\alpha)\Ff^{-1}\phi\Ff(v).
\end{eqnarray*}
The map $\xi$ is well-defined: suppose that $\alpha v=\alpha' w$ are non-zero vectors. Since $w=\frac{\alpha}{\alpha'}v$, $\Ff^{-1}\phi\Ff(w)=\Ff^{-1}\phi\Ff(\frac{\alpha}{\alpha'}v)=\Ff^{-1}\Ff(\psi(\frac{\alpha}{\alpha'})\Ff^{-1}\psi\Ff(v)$. Since $\psi\in \Aut(\F_{q^t})$, we have that $\psi(\frac{\alpha}{\alpha'})=\frac{\psi(\alpha)}{\psi(\alpha')}$ and hence $\xi(\alpha' w)=\psi(\alpha')\Ff^{-1}\phi\Ff(w)=\psi(\alpha)\Ff^{-1}\phi\Ff(v)=\xi(\alpha v)$.

Moreover, $\xi(v+v')=\Ff^{-1}\phi\Ff(v+v')=\xi(v)+\xi(v')$ and $\xi(\alpha v)=\psi_(\alpha)\xi(v)$ with $\psi$ in $\Aut(\F_{q^t})$ which shows that $\xi\in \GammaL(r,q^t)$. Finally, since for all $v\in \Fqtr$
$$\Ff(\xi)(\Ff(v))=\Ff(\xi(v))=\Ff(\Ff^{-1}\phi\Ff(v))=\phi(\Ff(v)),$$
we have that $\Ff(\xi)=\phi$. Finally, by Lemma \ref{lem2} (7), $\psi$ fixes every element of $\F_q$ if $\phi\in \GL(r,q^t)$.
\end{proof}

The subgroup of elements of $\Aut(\F_{q^t})$ that fix every element of $\F_q$, is denoted by $\Aut(\F_{q^t}/\F_q)$, this subgroup is generated by the map $x\mapsto x^q$.
The following theorem was for $\GL(rt,q)$ shown by Dye in \cite{Dye}, using different methods. 

\begin{theorem}  Let $\Ff: \Fqtr\to \F_q^{rt}$ be a vfr-map. The stabiliser in $\GammaL(rt,q)$ of the Desarguesian vector space partition in  $\F_q^{rt}$ obtained from $\Ff$ is isomorphic to $\GammaL(r,q^t)\cong \GL(r,q^t)\rtimes \Aut(\F_{q^t})$. 
The stabiliser in $\GL(rt,q)$of the Desarguesian vector space partition obtained from $\Ff$ is isomorphic to $\GL(r,q^t)\rtimes \Aut(\F_{q^t}/\F_q)$.
\end{theorem}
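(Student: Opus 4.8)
The plan is to combine the three structural results already in hand. First I would describe the stabiliser as a set. By Lemma~\ref{omgekeerd}, every map of the form $\Ff(\xi)$ with $\xi\in\GammaL(r,q^t)$ lies in the stabiliser of the Desarguesian vector space partition, and by Theorem~\ref{stabiliservector} every element of the stabiliser arises this way; hence the stabiliser in $\GammaL(rt,q)$ is exactly the image $\Ff(\GammaL(r,q^t))$. Since Theorem~\ref{embedding} says that $\Ff$ restricts to a group embedding of $\GammaL(r,q^t)$ into $\GammaL(rt,q)$, this image is isomorphic to $\GammaL(r,q^t)$, which settles the first isomorphism.

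Next I would verify the decomposition $\GammaL(r,q^t)\cong\GL(r,q^t)\rtimes\Aut(\F_{q^t})$. Consider the map $\rho:\GammaL(r,q^t)\to\Aut(\F_{q^t})$ sending $\xi$ to its associated field automorphism $\psi$. From the composition rule $(\tau_2\tau_1)(\alpha v)=\psi_2(\psi_1(\alpha))\,(\tau_2\tau_1)(v)$ one sees that $\rho$ is a group homomorphism; it is surjective (indeed $\rho((\mathrm{id},\psi)_B)=\psi$) and its kernel is precisely $\GL(r,q^t)$, which is therefore normal. Fixing a basis $B$ as in Lemma~\ref{lem10}, the set $\{(\mathrm{id},\psi)_B\mid\psi\in\Aut(\F_{q^t})\}$ is a subgroup: a short computation on $\sum_i\mu_iv_i$ gives $(\mathrm{id},\psi_2)_B(\mathrm{id},\psi_1)_B=(\mathrm{id},\psi_2\psi_1)_B$, so it is isomorphic to $\Aut(\F_{q^t})$. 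It meets $\GL(r,q^t)$ trivially, and since $(\xi',\psi)_B=(\xi',\mathrm{id})_B(\mathrm{id},\psi)_B$ the two subgroups generate $\GammaL(r,q^t)$; hence it is a complement and the semidirect product decomposition follows.

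For the statement about $\GL(rt,q)$, I would first record exactly when $\Ff(\xi)$ is $\F_q$-linear. In the proof of Theorem~\ref{embedding} the identity $\Ff(\xi)(\lambda\Ff(v))=\psi(\lambda)\Ff(\xi)(\Ff(v))$ was shown, with $\psi$ the automorphism associated with $\xi$; since $\Ff(\xi)$ is invertible this forces: $\Ff(\xi)\in\GL(rt,q)$ if and only if $\psi(\lambda)=\lambda$ for all $\lambda\in\F_q$, i.e. if and only if $\psi\in\Aut(\F_{q^t}/\F_q)$. Intersecting the set description of the stabiliser with $\GL(rt,q)$ then gives that the stabiliser in $\GL(rt,q)$ equals $\Ff\big(\rho^{-1}(\Aut(\F_{q^t}/\F_q))\big)$, which is isomorphic to $\rho^{-1}(\Aut(\F_{q^t}/\F_q))$. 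Repeating the argument of the previous paragraph with the subgroup $\Aut(\F_{q^t}/\F_q)\le\Aut(\F_{q^t})$ in place of $\Aut(\F_{q^t})$ yields $\rho^{-1}(\Aut(\F_{q^t}/\F_q))\cong\GL(r,q^t)\rtimes\Aut(\F_{q^t}/\F_q)$, as claimed.

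I do not expect a serious obstacle: essentially all the geometry is already packaged into Theorems~\ref{embedding} and~\ref{stabiliservector} and Lemma~\ref{omgekeerd}, and what remains is the purely group-theoretic bookkeeping of the two semidirect products. The only point needing a little care is the ``only if'' direction of the linearity criterion for $\Ff(\xi)$, which relies on the standard fact — already recorded in the text — that a field automorphism of $\F_{q^t}$ fixes $\F_q$ pointwise exactly when it lies in $\langle x\mapsto x^q\rangle=\Aut(\F_{q^t}/\F_q)$.
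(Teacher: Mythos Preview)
Your proposal is correct and follows essentially the same route as the paper: both identify the stabiliser with $\Ff(\GammaL(r,q^t))$ via Lemma~\ref{omgekeerd} and Theorem~\ref{stabiliservector}, invoke Theorem~\ref{embedding} for the isomorphism, and then establish the semidirect product by means of the ``associated field automorphism'' map together with the section $\psi\mapsto(\mathrm{id},\psi)_B$. The only cosmetic difference is that you phrase the splitting via ``normal kernel plus complement'' whereas the paper uses the equivalent ``retraction onto $H$'' formulation; your biconditional $\Ff(\xi)\in\GL(rt,q)\Leftrightarrow\psi\in\Aut(\F_{q^t}/\F_q)$ is also exactly what the paper extracts (one direction from Theorem~\ref{stabiliservector}, the other implicit in the proof of Theorem~\ref{embedding}).
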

\begin{proof} Using Lemma \ref{omgekeerd} and Theorem \ref{stabiliservector}, we see that the stabiliser of the Desarguesian vector space partition in $\GammaL(rt,q)$ is isomorphic to $\GammaL(r,q^t)$ and the stabiliser of the Desarguesian vector space partition in $\GL(rt,q)$ consists of all elements $\Ff(\xi)$ where $\xi$ has associated field automorphism $\psi\in \Aut(\F_{q^t}/\F_q)$. Recall that $G\cong N\rtimes H$ if and only if $N$ and $H$ are subgroups of $G$ and there exists a homomorphism $G\to H$ which is the identity on $H$.

We can see $\Aut(\F_{q^t})$  as a subgroup of $\GammaL(r,q^t)$  by mapping an element $\psi$ of $\Aut(\F_{q^t})$ to the semilinear map $\xi_\psi$ defined as $\xi_\psi:\sum_{i=1}^r \mu_i v_i \mapsto \sum_{i=1}^r \psi(\mu_i)v_i$, for a basis $B=\{v_1,\ldots,v_r\}$ of $\Fqtr$, or in other words, by considering the semilinear map $(id,\psi)_B.$


Consider the map $f$ whichs maps an invertible semilinear map $\xi$ onto its associated field automorphism $\psi$. The map $f$ has as kernel exactly all invertible linear maps, hence, the elements of $\GL(r,q^t)$. It is clear that $f(\GammaL(r,q^t))=\Aut(\F_{q^t})$, and $f$ is the identity on $\Aut(\F_{q^t})$. Hence, we obtain the well-known fact that $\GammaL(r,q^t)\cong \GL(r,q^t)\rtimes \Aut(\F_q)$. Now the image under $f$ of the stabiliser of the Desarguesian vector space partition in $\GL(rt,q)$ is precisely the group $\Aut(\F_{q^t}/\F_q)$, which proves the theorem.
\end{proof}

\section{The field reduction map for projective spaces and projective general semilinear groups}

\subsection{Projective semilinear maps and Desarguesian spreads}

Let $\Pqt=\PG(r-1,q^t)$ be the projective space corresponding to $\Fqtr$, i.e. every point of $\Pqt$ corresponds to a set of vectors $\la v\ra_{q^t}:=\{\alpha v\mid\alpha \in \F_{q^t}\}$, where $v\in (\Fqtr)^*$. Likewise every subspace of $\Pqt$ (sometimes called an $\F_{q^t}$-subspace for clarity) is a set $\la U\ra_{q^t}:=\{\la v\ra_{q^t}\mid v \in U^*\}$, where $U$ is a vector subspace of $\F_{q^t}^r$. Note that if $U=0$, then $\la U\ra_q$ is the empty projective subspace.

Let $\Pq=\PG(rt-1,q)$ denote the projective space corresponding to $\F_q^{rt}$, i.e. every point of $\Pq$ corresponds to a set of vectors $\la v\ra_q:=\{\l v\mid\l \in \F_q\}$ where $v\neq 0$. Likewise every subspace of $\Pq$, sometimes denoted an $\F_{q}$-subspace for clarity, is a set $\{\la v\ra_{q}\mid v \in U^*\}$, where $U$ is a vector subspace of $\F_q^{rt}$.

Consider the group of invertible semilinear maps $\GammaL(r,q^t)$. The {\em centre} $Z$ of $\GammaL(r,q^t)$ consists of all invertible scalar maps, i.e. $Z=\{m_\beta\mid \beta \in \F_{q^t}^*\}$, where as before $m_\beta$ denotes the map
\begin{eqnarray*}
m_\beta:\Fqtr&\rightarrow& \Fqtr\\
v&\mapsto& \beta v.
\end{eqnarray*}
Consider an element of $\PGammaL(r,q^t)\cong \GammaL(r,q^t)/Z$, then this corresponds to the set $\{m_\alpha \xi \mid \alpha \in \F_{q^t}^*\}$ for some $\xi\in \GammaL(r,q^t)$. 

In analogy with the notation for projective points, we denote this set as $\la \xi\ra_{q^t}$. Similarly, an element of $\PGammaL(rt,q)$ is denoted as $\la \phi \ra_q$ for some $\phi\in \GammaL(rt,q)$.

The following lemma is of course well-known.
\begin{lemma} $\PGammaL(r,q^t)\cong \PGL(r,q^t)\rtimes \Aut(\F_{q^t})$. 
\end{lemma}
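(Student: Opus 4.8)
The plan is to projectivise the isomorphism $\GammaL(r,q^t)\cong\GL(r,q^t)\rtimes\Aut(\F_{q^t})$ obtained in the previous theorem by passing to the quotient by the subgroup $Z=\{m_\beta\mid\beta\in\F_{q^t}^*\}$ of all scalar maps. By definition $\PGammaL(r,q^t)=\GammaL(r,q^t)/Z$; since $Z$ is contained in $\GL(r,q^t)$ and is precisely the centre of $\GL(r,q^t)$, we also have $\PGL(r,q^t)=\GL(r,q^t)/Z$. Writing $\pi\colon\GammaL(r,q^t)\to\PGammaL(r,q^t)$ for the natural projection, it follows (as $Z\subseteq\GL(r,q^t)$) that $\pi(\GL(r,q^t))=\GL(r,q^t)/Z$, which we identify with $\PGL(r,q^t)$, a subgroup of $\PGammaL(r,q^t)$.

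Next I would build the required retraction. Let $f\colon\GammaL(r,q^t)\to\Aut(\F_{q^t})$ be the homomorphism sending an invertible semilinear map to its associated field automorphism, as used in the proof of the preceding theorem. Since $Z\subseteq\GL(r,q^t)=\ker f$, the map $f$ factors through $\pi$ and induces a well-defined surjective homomorphism $\bar f\colon\PGammaL(r,q^t)\to\Aut(\F_{q^t})$ with $\ker\bar f=\ker f/Z=\GL(r,q^t)/Z=\PGL(r,q^t)$. In particular $\PGL(r,q^t)$ is normal in $\PGammaL(r,q^t)$.

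It remains to realise $\Aut(\F_{q^t})$ as a subgroup of $\PGammaL(r,q^t)$ on which $\bar f$ is the identity. As before, fix a basis $B=\{v_1,\dots,v_r\}$ of $\Fqtr$ and embed $\Aut(\F_{q^t})$ into $\GammaL(r,q^t)$ by $\psi\mapsto\xi_\psi:=(id,\psi)_B$, i.e. $\sum_i\mu_iv_i\mapsto\sum_i\psi(\mu_i)v_i$. I would then check that $\psi\mapsto\la\xi_\psi\ra_{q^t}=\pi(\xi_\psi)$ remains injective: if $\xi_\psi=m_\beta$, comparing images of $v_1,\dots,v_r$ gives $\psi(\mu_i)=\beta\mu_i$ for all $\mu_i$, and taking $\mu_i=1$ forces $\beta=\psi(1)=1$, hence $\psi=id$; so the image of $\Aut(\F_{q^t})$ in $\GammaL(r,q^t)$ meets $Z$ only in the identity, and $\pi$ restricts to an isomorphism from it onto a subgroup $H\le\PGammaL(r,q^t)$ with $H\cong\Aut(\F_{q^t})$. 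Since $f(\xi_\psi)=\psi$, the composite $\bar f\circ\pi$ is the identity on $\{\xi_\psi\}$, so $\bar f$ is the identity on $H$. Applying the semidirect product criterion recalled earlier (with $N=\ker\bar f=\PGL(r,q^t)$, the subgroup $H$, and the homomorphism $\bar f\colon\PGammaL(r,q^t)\to H\cong\Aut(\F_{q^t})$) yields $\PGammaL(r,q^t)\cong\PGL(r,q^t)\rtimes\Aut(\F_{q^t})$.

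The only genuinely delicate point is the bookkeeping around the two quotients: one must be sure that $\PGL(r,q^t)$ is literally the image of $\GL(r,q^t)$ in $\PGammaL(r,q^t)$ rather than some proper quotient of it, which comes down precisely to the observation that $Z$ is simultaneously the subgroup used to define $\PGammaL(r,q^t)$ and the centre used to define $\PGL(r,q^t)$, together with the fact that $Z\subseteq\GL(r,q^t)$. Everything else is a routine transfer of the splitting already available at the level of $\GammaL(r,q^t)$.
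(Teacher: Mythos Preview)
Your proof is correct and follows essentially the same route as the paper: both arguments embed $\Aut(\F_{q^t})$ in $\PGammaL(r,q^t)$ via $\psi\mapsto\la(id,\psi)_B\ra_{q^t}$, verify injectivity by evaluating at $\mu=1$, and use the ``associated field automorphism'' homomorphism together with the semidirect-product criterion recalled in the preceding theorem. Your write-up is a bit more explicit about the quotient bookkeeping (factoring $f$ through $\pi$ and noting that the same $Z$ defines both $\PGammaL$ and $\PGL$), but this is only a difference in presentation, not in strategy.
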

\begin{proof} Pick a basis $B$ for $\F_{q^t}^r$. With every element $\psi$ of $\Aut(\F_{q^t})$, we associate the element $\langle \rho_\psi\rangle_{q^t}$ of $\PGammaL(r,q^t)$, where $\rho_\psi$ is the element $(id,\psi)_B$ of $\GammaL(r,q^t)$. This association is a bijection: if there exist $\alpha,\beta\in \F_{q^t}$ such that $m_\alpha \rho_{\psi_1} (v)=m_\beta\rho_{\psi_2}(v)$ for all $v$, then it follows that $\alpha\psi_1(\mu_i)=\beta\psi_2(\mu_i)$ for all $\mu_i\in \F_{q^t}$, which yields by considering $\mu_i=1$ that $\alpha=\beta$ and hence that $\psi_1=\psi_2$. This shows that $\Aut(\F_{q^t})$ is a subgroup of $\PGammaL(r,q^t)$. Now consider the mapping $\langle (\xi,\psi)_B\rangle_{q^t}\mapsto \langle\rho_\psi\rangle_{q^t}$, where $(\xi,\psi)_B$ is as before the semilinear map defined by $\xi\in \GL(r,q^t)$ and $\psi\in \Aut(\F_{q^t})$ (see Lemma \ref{lem10}). We find that this is a well-defined homomorphism with image $\Aut(\F_{q^t})$ and kernel all the elements of the form $\langle \xi\rangle_{q^t}$, $\xi\in \GL(r,q^t)$, hence, $\PGL(r,q^t)$.
\end{proof}

From Theorem \ref{embedding}, we obtain an embedding of $\GammaL(r,q^t)$ in $\GammaL(rt,q)$. Also in the case of projective spaces it is possible to define a field reduction map, which will now be denoted as $\Fb$. We will prove results, similar to Theorem \ref{stabiliservector}. However, we will not have a well-defined extension of $\Fb$ to elements of $\PGammaL(r,q^t)$.

\begin{lemma} \label{Bpunt} Let $\la v\ra_{q^t} \in \Pqt$ and consider the set $\Fb(\la v\ra_{q^t})=\la \{ \Ff(\alpha v) \mid \alpha \in \F_{q^t}\}\ra_q$. Then $\Fb(\la v\ra_{q^t})$ is a $(t-1)$-dimensional projective $\F_q$-subspace of $\Pq$ and $\Fb(\la v\ra_{q^t})=\{\la \Ff(\alpha v)\ra_q \mid \alpha \in \F_{q^t}^\ast\}$.
\end{lemma}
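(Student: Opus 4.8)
The plan is to deduce everything directly from Lemma \ref{partition} together with the bijectivity of $\Ff$, essentially by unwinding the definition of the projective subspace attached to a vector subspace. First I would set $S:=\{\Ff(\alpha v)\mid \alpha\in\F_{q^t}\}$. By Lemma \ref{partition} this is a $t$-dimensional $\F_q$-vector subspace of $\F_q^{rt}$, so by definition the $\F_q$-subspace $\la S\ra_q=\Fb(\la v\ra_{q^t})$ of $\Pq$ that it determines has projective dimension $t-1$, which is the first assertion.

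For the second assertion I would use that $\Ff$ is a bijection with $\Ff(0)=0$ (the latter follows from $\F_q$-linearity, or directly from $\Ff(u)=\Ff(w)\iff u=w$ with $w=0$): since $v\neq 0$, for $\alpha\in\F_{q^t}$ we have $\Ff(\alpha v)=0\iff \alpha v=0\iff \alpha=0$. Hence the nonzero vectors of $S$ are exactly the vectors $\Ff(\alpha v)$ with $\alpha\in\F_{q^t}^\ast$, and therefore, by the description of the point set of an $\F_q$-subspace recalled just before the lemma,
\[
\Fb(\la v\ra_{q^t})=\{\la w\ra_q\mid w\in S^\ast\}=\{\la \Ff(\alpha v)\ra_q\mid \alpha\in\F_{q^t}^\ast\}.
\]

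The only step that is not pure bookkeeping is checking that $\Fb$ is well defined on points, i.e. that $S$ does not depend on the chosen representative $v$ of $\la v\ra_{q^t}$: if $\la v\ra_{q^t}=\la v'\ra_{q^t}$ then $v'=\beta v$ for some $\beta\in\F_{q^t}^\ast$, and since $\alpha\mapsto\alpha\beta$ permutes $\F_{q^t}$ we get $\{\Ff(\alpha v')\mid\alpha\in\F_{q^t}\}=\{\Ff(\alpha\beta v)\mid\alpha\in\F_{q^t}\}=S$. I do not expect any genuine obstacle here; the only subtlety is being careful that passing from vectors to points, and the choice of representative, are harmless precisely because $\Ff$ is a bijection fixing $0$. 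As a sanity check one may note that $\la\Ff(\alpha v)\ra_q=\la\Ff(\alpha' v)\ra_q$ iff $\alpha$ and $\alpha'$ lie in the same coset of $\F_q^\ast$ in $\F_{q^t}^\ast$, so each of the $(q^t-1)/(q-1)$ points of $\la S\ra_q$ is described by exactly $q-1$ values of $\alpha$, consistent with $\la S\ra_q$ being a $(t-1)$-dimensional $\F_q$-subspace.
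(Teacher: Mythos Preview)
Your proof is correct and follows essentially the same route as the paper: set $U=\{\Ff(\alpha v)\mid \alpha\in\F_{q^t}\}$, invoke Lemma~\ref{partition} to see that $U$ is a $t$-dimensional $\F_q$-subspace (hence $\la U\ra_q$ has projective dimension $t-1$), and then identify $U^\ast$ with $\{\Ff(\alpha v)\mid \alpha\in\F_{q^t}^\ast\}$ to get the second equality. Your additional remarks on well-definedness and the coset count are fine but go slightly beyond what the paper records here.
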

\begin{proof} From Lemma \ref{partition}, we have that $U=\{\Ff(\alpha v)\mid \alpha \in \F_{q^t}\}$ is a $t$-dimensional $\F_q$-subspace $U$ of $\F_q^{rt}$, hence, $\la U\ra_q$ is a $(t-1)$-dimensional projective subspace of $\Pq$. Moreover, $\la U\ra_q=\{ \la w\ra_{q}\mid w\in U^*\}=\{\la w\ra_q\mid w\in \{\Ff(\alpha v)\mid \alpha \in \F_{q^t}^*\}\}=\la \{ \Ff(\alpha v) \mid \alpha \in \F_{q^t}\}\ra_q=\Fb(\la v\ra_{q^t})$.
\end{proof}

Define the following map, for $v\in \F_{q^t}^r*$:
\begin{eqnarray*}
\Fb:\Pqt&\rightarrow& \Pq\\
\la v\ra_{q^t}&\mapsto& \{\la \Ff(\alpha v)\ra_q \mid \alpha \in \F_{q^t}^\ast\}.
\end{eqnarray*}
This map $\Fb$ is the {\em field reduction map} from $\Pqt$ to $\Pq$, as defined in \cite{FQ11}. Note that the vector field reduction map maps a vector onto a vector whereas the field reduction map maps a projective point onto a projective subspace.

\begin{remark}\label{opmerkinginbedding} 
It is well-known that an element of $\PGammaL(r,q^t)$ acts in a natural way on the points of a projective space: if $\la \xi\ra_{q^t}$ is an element of $\PGammaL(r,q^t)$ and $\la v\ra_{q^t}$ is a point of $\Pqt$, then $\la \xi\ra_{q^t}(\la v\ra_{q^t})$ is defined as $\la \xi(v)\ra_{q^t}$. This action is clearly well-defined.

It seems a natural idea to extend the definition of $\Fb$ to elements of $\PGammaL(r,q^t)$ by defining $\Fb(\la \xi\ra_{q^t})$ as $\la \Ff(\xi)\ra_q$. However, this mapping is not well-defined as $\la \xi \ra_{q^t}=\la \alpha \xi\ra_{q^t}$ but it is not hard too check that $\la \Ff(\alpha \xi)\ra_q$ does not equal $\la \Ff(\xi)\ra_q$ if $\alpha\notin \F_q$. We give some more information on the embedding of $\PGammaL(r,q^t)$ in $\PGammaL(rt,q)$ in Section \ref{appendix}.
\end{remark}

A {\em $(t-1)$-spread} of a projective space $\Pq$ is a partition of the points of $\Pq$ into $(t-1)$-dimensional subspaces.

\begin{lemma}\label{lem6} Consider the set $\D=\{\Fb(\la v\ra_{q^t})\mid v\in \F_{q^t}^{r*}\}$, then $\D$ is a $(t-1)$-spread of $\Pq$.
\end{lemma}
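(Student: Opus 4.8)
The plan is to show that $\D=\{\Fb(\la v\ra_{q^t})\mid v\in \F_{q^t}^{r*}\}$ is a $(t-1)$-spread of $\Pq$ by checking the two defining properties: every member of $\D$ is a $(t-1)$-dimensional $\F_q$-subspace of $\Pq$, and the members of $\D$ partition the point set of $\Pq$. The first property is immediate from Lemma \ref{Bpunt}, which already tells us that each $\Fb(\la v\ra_{q^t})$ is a $(t-1)$-dimensional projective subspace. So the real content is the partition claim, and this is essentially a translation of Lemma \ref{partition} from the vector-space level to the projective level.

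For the partition, I would argue as follows. First, \emph{covering}: let $\la w\ra_q$ be an arbitrary point of $\Pq$, so $w\in (\F_q^{rt})^*$. By Lemma \ref{partition}, the sets $\{\Ff(\alpha v)\mid \alpha\in\F_{q^t}\}$, for $v$ ranging over $\F_{q^t}^{r*}$, form a vector space partition of $\F_q^{rt}$, so $w$ lies in one of them, say $w=\Ff(\alpha_0 v)$ for some $v\in\F_{q^t}^{r*}$ and $\alpha_0\in\F_{q^t}^*$. Then $\la w\ra_q = \la\Ff(\alpha_0 v)\ra_q\in \Fb(\la v\ra_{q^t})$ by the description of $\Fb$ in Lemma \ref{Bpunt}, so $\D$ covers every point. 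Second, \emph{disjointness}: suppose a point $\la w\ra_q$ lies in both $\Fb(\la v\ra_{q^t})$ and $\Fb(\la v'\ra_{q^t})$. Then, again by Lemma \ref{Bpunt}, the $t$-dimensional $\F_q$-subspaces $U=\{\Ff(\alpha v)\mid\alpha\in\F_{q^t}\}$ and $U'=\{\Ff(\alpha' v')\mid\alpha'\in\F_{q^t}\}$ share the nonzero vector $w$; by the second part of Lemma \ref{partition} this forces $U=U'$, hence $\Fb(\la v\ra_{q^t})=\la U\ra_q=\la U'\ra_q=\Fb(\la v'\ra_{q^t})$. Thus distinct members of $\D$ are disjoint, and together with covering this gives a partition of the points of $\Pq$ into $(t-1)$-dimensional subspaces, i.e. a $(t-1)$-spread.

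I do not expect any genuine obstacle here: the statement is a direct corollary of Lemma \ref{partition} and Lemma \ref{Bpunt}, with the only care needed being the bookkeeping between nonzero vectors and projective points (the passage $U=U'\iff \la U\ra_q=\la U'\ra_q$ for subspaces, and the fact that $w\neq 0$ so that membership of $\la w\ra_q$ in $\Fb(\la v\ra_{q^t})$ is equivalent to $w$ being a nonzero vector of the corresponding $U$). If anything, the mildly delicate point is making sure the indexing set is the set of \emph{points} $\la v\ra_{q^t}$ of $\Pqt$ rather than the vectors $v$: since $\Fb$ is well-defined on points (scaling $v$ by $\F_{q^t}^*$ does not change $U$, by Lemma \ref{partition}), the family $\D$ is indexed without redundancy by the $q^{r-1}+\cdots+1 = (q^{rt}-1)/(q^t-1)$ points of $\Pqt$, matching the number of $(t-1)$-spaces needed to partition the $(q^{rt}-1)/(q-1)$ points of $\Pq$; but this count is only a sanity check and is not logically needed for the proof.
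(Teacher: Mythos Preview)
Your proof is correct and follows essentially the same approach as the paper: invoke Lemma~\ref{Bpunt} for the dimension of each $\Fb(\la v\ra_{q^t})$ and Lemma~\ref{partition} for the partition property, with your version simply spelling out the covering and disjointness steps more explicitly. One small slip in your optional sanity check: the number of points of $\Pqt$ should read $(q^t)^{r-1}+\cdots+1$, not $q^{r-1}+\cdots+1$, though as you note this count is not needed for the argument.
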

\begin{proof} We have shown in Lemma \ref{partition} that $\{\{\Ff(\alpha v) \mid \alpha \in \F_{q^t}\} \mid v\in \Fqtrs\}$ is a vector space partition of $\F_q^{rt}$. It follows from Lemma \ref{Bpunt} that for every $v\in \Fqtrs$, $\Fb(\la v\ra_{q^t})$ is a $(t-1)$-dimensional projective subspace of $\Pq$. So we obtain that that $\D$ is a $(t-1)$-spread of $\Pq$.
\end{proof}

We call the set $\D$ in Lemma \ref{lem6}, the {\em spread obtained from the field reduction map $\Ff: \Fqtr\to \F_q^{rt}$}. A $(t-1)$-spread $\mathcal{S}$ in $\Pq$, $r>2$ is called {\em Desarguesian} if the incidence structure $(\mathcal{P},\mathcal{L}, \mathcal{I})$ with as point set $\mathcal{P}$ the elements of $\mathcal{S}$ in $\Pqt$, as line set $\L$ the subspaces spanned by two different elements of $\mathcal{S}$, and where incidence is containment, is isomorphic to the point-line incidence structure of the Desarguesian projective space $\Pqt$. It follows from this definition that a Desarguesian spread $\mathcal{S}$ in $\Pqt$ is {\em normal} (or {\em geometric}), i.e. it has the property that a subspace spanned by elements of $\mathcal{S}$ is partitioned by elements of $\mathcal{S}$.
A $(t-1)$-spread $\mathcal{S}$ in $\mathbb{P}_q^{2t}$, is called {\em Desarguesian} if the projective completion of the incidence structure obtained from the Andr\'e/Bruck-Bose construction starting from $\S$ is isomorphic to $\mathbb{P}(\F_{q^t}^2)=\mathbb{P}_{q^t}^3$.

\begin{lemma} Let $\D$ be the $(t-1)$-spread obtained from the field reduction map $\Ff: \Fqtr\to \F_q^{rt}$ in Lemma \ref{lem6}. Then $\D$ is a Desarguesian spread.
\end{lemma}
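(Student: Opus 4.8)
\noindent\emph{Proof idea.} The plan is to show that the vector field reduction map $\Ff$ translates the lattice of $\F_{q^t}$-subspaces of $\Fqtr$ into the lattice of $\F_q$-subspaces of $\F_q^{rt}$ in an incidence-preserving way, and to observe that this is exactly what the definition of a Desarguesian spread asks for. The key elementary point is that if $U$ is a $k$-dimensional $\F_{q^t}$-subspace of $\Fqtr$, then $\Ff(U):=\{\Ff(u)\mid u\in U\}$ is a $kt$-dimensional $\F_q$-subspace of $\F_q^{rt}$: indeed $\Ff(u)+\l\Ff(u')=\Ff(u+\l u')\in\Ff(U)$ for $\l\in\F_q$ because $U$ is in particular an $\F_q$-subspace, so $\Ff(U)$ is $\F_q$-linear, and $|\Ff(U)|=|U|=q^{kt}$ since $\Ff$ is a bijection. (This is where one uses that $U$ is closed under all of $\F_{q^t}$, not just under $\F_q$.)

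Writing $S_v=\{\Ff(\alpha v)\mid\alpha\in\F_{q^t}\}$ for the $t$-dimensional piece attached to $v\in\Fqtrs$ in Lemma \ref{partition}, I would then record: (i) $S_v\subseteq\Ff(U)$ if and only if $v\in U$ (since $\Ff(\alpha v)\in\Ff(U)\iff\alpha v\in U$, and this holds for all $\alpha$ exactly when $v\in U$), so $\Ff(U)$ is partitioned by those $S_v$ with $\la v\ra_{q^t}\in\la U\ra_{q^t}$; and (ii) if $v,w$ are $\F_{q^t}$-independent then $S_v\cap S_w=\{0\}$ by Lemma \ref{partition}, hence $S_v\oplus S_w=\Ff(\la v,w\ra_{q^t})$ by a dimension count. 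Consequently $\Fb$ is a bijection from the points of $\Pqt$ onto $\D$ (injectivity is again Lemma \ref{partition}), and for a line $m=\la U\ra_{q^t}$ of $\Pqt$ with $\dim_{\F_{q^t}}U=2$, the $(2t-1)$-dimensional subspace $\la\Ff(U)\ra_q$ of $\Pq$ is the subspace spanned by $\Fb(P)$ and $\Fb(Q)$ for any two distinct points $P,Q$ of $m$, and it contains precisely the members $\Fb(P')$ of $\D$ with $P'\in m$.

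For $r>2$ this finishes the proof. The assignment $P\mapsto\Fb(P)$ on points, together with $m\mapsto\la\Ff(U)\ra_q$ on lines, is a bijection from the point-line incidence structure of $\Pqt$ onto the incidence structure $(\mathcal P,\mathcal L,\mathcal I)$ attached to $\D$: every $\ell\in\mathcal L$, being spanned by two members $\Fb(P_1),\Fb(P_2)$ of $\D$, equals $\la\Ff(U)\ra_q$ for $U$ the $2$-dimensional $\F_{q^t}$-span of representatives of $P_1,P_2$ (so the line map is onto), while $\la\Ff(U_1)\ra_q=\la\Ff(U_2)\ra_q$ forces $\Ff(U_1)=\Ff(U_2)$, hence $U_1=U_2$ (so it is injective); and incidence is preserved in both directions by (i). Hence the incidence structure of $\D$ is isomorphic to the point-line incidence structure of $\Pqt$, i.e. $\D$ is Desarguesian.

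For $r=2$ the span-based definition degenerates, since any two members of a $(t-1)$-spread of $\Pq=\PG(2t-1,q)$ already span $\Pq$, and one must instead use the Andr\'e/Bruck-Bose construction. Here I would use the $\F_q$-linear bijection $\Ff$ to identify the point set $\F_q^{2t}$ of the underlying affine space with $\F_{q^t}^2$; under this identification the members of $\D$, lifted to vector subspaces, are exactly the $1$-dimensional $\F_{q^t}$-subspaces of $\F_{q^t}^2$, so the lines of the resulting translation plane, namely the cosets $w+S_v$, correspond precisely to the affine lines of $\AG(2,q^t)$. Thus that plane is $\AG(2,q^t)$, whose projective completion is the Desarguesian plane $\PG(2,q^t)$, so $\D$ is Desarguesian in this case too. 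I expect the main obstacle to lie in the $r=2$ case: pinning down the conventions of the Andr\'e/Bruck-Bose construction carefully enough to recognise the spread cosets as the affine lines of $\AG(2,q^t)$; for $r>2$ the argument is essentially the routine observation that $\Ff$ intertwines the two subspace lattices, to which one only needs to add that $\Ff(U)$ has the expected $\F_q$-dimension.
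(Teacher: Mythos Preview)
Your proposal is correct and follows essentially the same approach as the paper: for $r>2$ you verify, via $\Ff$, that the incidence structure $(\mathcal P,\mathcal L,\mathcal I)$ attached to $\D$ is isomorphic to the point--line structure of $\Pqt$, and for $r=2$ you invoke the Andr\'e/Bruck--Bose definition. Your treatment of the $r=2$ case---identifying the ABB translation plane directly with $\AG(2,q^t)$ by transporting the spread cosets along $\Ff^{-1}$---is in fact a bit more streamlined than the paper's version, which instead builds an explicit model of $\PG(2,q^t)$ over $\F_{q^t}^2\times\F_{q^t}$ and applies field reduction coordinatewise, but the underlying idea is the same.
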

\begin{proof}First suppose that $r>2$. In order to show that $\D$ is a Desarguesian spread, we consider the incidence structure $(\mathcal{P},\mathcal{L},\mathcal{I})$, where $\mathcal{P}$ is the set of elements of $\D$, i.e. the sets $\{\la \Ff(\alpha v)\ra_q\mid \alpha \in \F_{q^t}^*\}$, $v\in \F_{q^t}^{r*}\}$ and the lines $\mathcal{L}$ are the sets spanned by elements of $\D$, i.e. the sets $\{\la \lambda \Ff(\alpha v)+\mu \Ff(\beta w)\ra_q \mid \alpha,\beta\in \F_{q^t}, (\l,\mu)\in (\F_q\times \F_q)\setminus (0,0)\}$ and $\mathcal{I}$ is containment.

It is clear that the set $\mathcal{L}$ consists of the sets of the form $\{ \la \Ff(\alpha'v)+\Ff(\beta'w)\ra_q\mid \alpha',\beta'\in \F_{q^t}\}=\{\la \Ff(\alpha' v+\beta'w)\ra_q\mid \alpha',\beta'\in \F_{q^t}\}$. Since $\Pqt$ has as points the elements $\la \alpha v\mid \alpha \in \F_{q^t}^*\}$, $v\in \F_{q^t}^{r*}$ and as lines the sets $\la \alpha'v+\beta'w\ra_{q^t}$, we see that $\Ff$ provides and isomorphism between $(\mathcal{P},\mathcal{L},\mathcal{I})$ and the point-line incidence structure of $\Pqt$.

If $r=2$, the elements of $\D$ can be considered as the points of a projective line that is contained in $\mathbb{P}_{q^t}^2$.
To see this, first notice that $\mathbb{P}_{q^t}^3$ can be modeled as arising from the vector space $\F_{q^t}^2\times \F_{q^t}$, i.e. the points are the sets $\la (v_1,v_2)\ra_{q^t}$, where $v_1\in \F_{q^t}^2$ and $v_2\in \F_{q^t}$. Likewise, the lines are the sets $\{\la \alpha(v_1,v_2)+\beta(v_1',v_2')\ra_{q^t}\mid\alpha, \beta\in \F_{q^t}\}$, with $(v_1',v_2')\neq \gamma (v_1,v_2)$ where $\gamma\in \F_{q^t}$. Now consider a vfr-map $\Ff_1$ from $\F_{q^t}^2$ to $\F_q^{2t}$ and a a vfr-map $\Ff_2$ from $\F_{q^t}$ to $\F_q$. Then it is clear that the mapping $f:\la (v_1,v_2)\ra_{q^t}\mapsto \{\la \Ff_1(\alpha v_1),\Ff_2(\alpha v_2)\ra_q\mid\alpha\in \F_{q^t}\}$ defines a mapping defined on the points of $\mathbb{P}_{q^t}^3$. The mapping $f$ maps a line $\la \alpha(v_1,v_2)+\alpha'(v_1',v_2')\ra_{q^t}$ onto $\{\la (\l \Ff_1(\beta_1v_1)+\l'\Ff_1(\beta'_1v_1'),\l \Ff_2(\beta_2v_2)+\l'\Ff_2(\beta'_2v_2')\ra_q\mid \beta,\beta'\in \F_{q^t}, (\l,\l')\in (\F_q\times \F_q)\setminus \{(0,0)\}\}$. It is not too hard too check that $f$ preserves incidence and maps the line of $\PG(2,q^t)$ corresponding to the vectors $(v_1,0)$ onto the a set which is isomorphic to the set of elements of $\D$.
\end{proof}

We have seen that the spread $\D$ obtained from the field reduction map $\Ff$ in Lemma \ref{lem6} is a Desarguesian spread; we will see in Corollary \ref{desequiv2} that all Desarguesian $(t-1)$-spreads in $\PG(rt-1,q)$ are $\PGL(rt,q)$-equivalent.


\subsection{The stabiliser of a Desarguesian spread}

We will now investigate the stabiliser of a Desarguesian $(t-1)$-spread in $\Pq$. We will first show that w.l.o.g., we may consider the spread $\D$ obtained from the field reduction map $\Ff: \Fqtr\to \F_q^{rt}$ in Lemma \ref{lem6}. We start with an easy lemma.

\begin{lemma} \label{omgekeerd2} Let $\D$ be the Desarguesian $(t-1)$-spread obtained in Lemma \ref{lem6} from the field reduction map $\Ff: \Fqtr\to \F_q^{rt}$ and let $m_\beta$ be the element of $\GammaL(r,q^t)$ mapping a vector $v$ onto  the vector $\beta v$, then $\la \Ff(m_\beta) \ra_q$ stabilises  $\D$ elementwise. Let $\xi$ in $\GammaL(r,q^t)$, then $\la \Ff(\xi)\ra_q$ stabilises $\D$.
\end{lemma}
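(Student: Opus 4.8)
The plan is to deduce everything from the already-established action of the vector field reduction map on $\GammaL(r,q^t)$ (Definition \ref{def}, Theorem \ref{embedding}) together with Lemma \ref{omgekeerd}, which tells us that $\Ff(\xi)$ stabilises the \emph{Desarguesian vector space partition} of $\F_q^{rt}$. The key observation is that the spread $\D$ of Lemma \ref{lem6} is nothing but the projectivisation of that vector space partition: its elements are exactly the subspaces $\la U\ra_q$ with $U=\{\Ff(\alpha v)\mid\alpha\in\F_{q^t}\}$ ranging over the members of the partition, by Lemma \ref{Bpunt}. So a linear (or semilinear) map of $\F_q^{rt}$ that permutes the members of the vector space partition induces, on passing to the quotient by scalars, a collineation of $\Pq$ that permutes the members of $\D$; one only needs to check this passage is legitimate, i.e. that $\la\Ff(\xi)\ra_q$ is well defined as a projective semilinear map (it is, since $\Ff(\xi)\in\GammaL(rt,q)$) and that it sends projective subspaces to projective subspaces (automatic for any element of $\GammaL(rt,q)$).

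For the second statement, I would argue as follows. Let $\xi\in\GammaL(r,q^t)$. By Lemma \ref{omgekeerd}, $\Ff(\xi)$ maps each member $U$ of the Desarguesian vector space partition onto another member $U'$ of it. Taking the image in $\Pq=\PG(rt-1,q)$, the collineation $\la\Ff(\xi)\ra_q$ maps $\la U\ra_q$ onto $\la U'\ra_q$, and since both $\la U\ra_q$ and $\la U'\ra_q$ are members of $\D$ (Lemma \ref{lem6} and Lemma \ref{Bpunt}), we conclude $\la\Ff(\xi)\ra_q$ stabilises $\D$ setwise. Concretely, using Definition \ref{def}, $\Ff(\xi)(\Ff(\alpha v)) = \Ff(\xi(\alpha v)) = \Ff(\psi(\alpha)\xi(v))$ where $\psi$ is the field automorphism associated with $\xi$; hence $\{\la\Ff(\xi)(\Ff(\alpha v))\ra_q\mid\alpha\in\F_{q^t}^\ast\} = \{\la\Ff(\beta\,\xi(v))\ra_q\mid\beta\in\F_{q^t}^\ast\} = \Fb(\la\xi(v)\ra_{q^t})$, which is again an element of $\D$, as $\psi$ is a bijection of $\F_{q^t}$ and $\xi(v)\neq 0$.

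For the first statement, specialise to $\xi = m_\beta$: then $\xi$ is $\F_{q^t}$-linear with associated automorphism the identity, and $\Ff(m_\beta)(\Ff(\alpha v)) = \Ff(m_\beta(\alpha v)) = \Ff(\alpha\beta v) = \Ff((\alpha\beta) v)$. As $\alpha$ ranges over $\F_{q^t}^\ast$, so does $\alpha\beta$, so $\{\la\Ff(m_\beta)(\Ff(\alpha v))\ra_q\mid\alpha\in\F_{q^t}^\ast\} = \{\la\Ff(\gamma v)\ra_q\mid\gamma\in\F_{q^t}^\ast\} = \Fb(\la v\ra_{q^t})$, i.e. $\la\Ff(m_\beta)\ra_q$ fixes \emph{every} spread element $\Fb(\la v\ra_{q^t})$ of $\D$; it therefore stabilises $\D$ elementwise.

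I do not expect a genuine obstacle here: the lemma is an immediate projectivisation of Lemma \ref{omgekeerd}, and the only thing requiring (minor) care is that passing to projective space is harmless — one must note that $\la\Ff(\xi)\ra_q$ is a well-defined element of $\PGammaL(rt,q)$ and that it maps the spread element $\Fb(\la v\ra_{q^t})$ onto $\Fb(\la \xi(v)\ra_{q^t})$ rather than onto some merely set-theoretic image. This is exactly the content of the displayed computations above, which in turn rest only on Definition \ref{def} and Lemma \ref{Bpunt}.
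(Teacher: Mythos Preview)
Your proof is correct and follows essentially the same approach as the paper: the paper's proof is precisely the two displayed computations $\la\Ff(m_\beta)\ra_q\la\Ff(\alpha v)\ra_q=\la\Ff(\beta\alpha v)\ra_q$ and $\la\Ff(\xi)\ra_q\la\Ff(\alpha v)\ra_q=\la\Ff(\psi(\alpha)\xi(v))\ra_q$ that you give, without the extra framing via Lemma \ref{omgekeerd}. Your additional remarks on the well-definedness of the projectivisation are sound but not needed.
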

\begin{proof} We use that $\langle \Ff(m_\beta)\rangle_q\la\Ff(\alpha v)\ra_q=\la \Ff(m_\beta(\alpha v))\ra_q=\la \Ff(\beta \alpha v)\ra_q$ from which the first part follows. Similarly, let $\psi$ be the field automorphism associated with $\xi$, then we have $\langle \Ff(\xi)\rangle_q\la\Ff(\alpha v)\ra_q=\la \Ff(\xi(\alpha v))\ra_q=\la \Ff(\psi(\alpha)\xi( v))\ra_q$.
\end{proof}

The following theorem is well-known and will be of use later. For a proof for $r=2$, see e.g. \cite[Theorem 25.6.7]{Hirschfeld-Thas} in which $\PGL$-equivalence is shown, for a proof for $r\geq 2$, see e.g. \cite[Corollary 16]{bader}. 
\begin{theorem}\label{desequiv} All Desarguesian $(t-1)$-spreads in $\Pq$ are $\PGammaL(rt,q)$-equivalent. 
\end{theorem}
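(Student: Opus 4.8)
The plan is to reduce the general statement to the concrete model $\D$ constructed in Lemma~\ref{lem6} and then exploit the coordinate-free recognition of the Desarguesian property. First I would observe that it suffices to show that an arbitrary Desarguesian $(t-1)$-spread $\S$ of $\Pq$ is $\PGammaL(rt,q)$-equivalent to $\D$; equivalence among arbitrary Desarguesian spreads then follows by transitivity. Fix the defining isomorphism: for $r>2$, by definition of ``Desarguesian'' there is an isomorphism $\theta$ from the point-line incidence structure $(\P,\L,\I)$ built on $\S$ to the point-line incidence structure of $\PG(r-1,q^t)=\Pqt$. The existing lemmas give the analogous isomorphism $\Fb$ from $(\P_\D,\L_\D,\I)$ to $\Pqt$. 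Composing, $\Fb^{-1}\circ\theta$ is an isomorphism of the two ``spread geometries''; the task is to lift this to an element of $\PGammaL(rt,q)$ carrying $\S$ to $\D$ as spreads.

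The engine for the lift is the fundamental theorem of projective geometry: any isomorphism between the point-line incidence structures of two Desarguesian projective spaces of dimension $\geq 2$ over finite fields is induced by a semilinear map, i.e. lies in $\PGammaL(r,q^t)$ (here we use $r-1\geq 2$, i.e.\ $r>2$). So $\Fb^{-1}\circ\theta$ is realised by some $\la\xi\ra_{q^t}\in\PGammaL(r,q^t)$. The next step is to transfer this back down through field reduction: the element $\la\Ff(\xi)\ra_q\in\PGammaL(rt,q)$ sends $\D$ to $\D$ by Lemma~\ref{omgekeerd2}, which is not yet what we want. Instead, the right move is to argue directly that a collineation $g\in\PGammaL(rt,q)$ inducing $\theta$ on the quotient geometries exists because each spread element of $\S$ (resp.\ $\D$) is recovered intrinsically as the set of points of $\Pq$ lying on no ``transversal line structure'' except those through it — more concretely, one builds $g$ on vectors: choose a basis of $\F_{q^t}^r$, use $\Fb^{-1}\circ\theta$ to match spread elements, and on each $(t-1)$-space fix an $\F_q$-semilinear bijection compatible with the matching on the spanned subspaces; patching these together consistently, using that a Desarguesian spread is normal (geometric), yields a single $\phi\in\GammaL(rt,q)$ with $\la\phi\ra_q$ mapping $\S$ to $\D$. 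I would phrase this as: $\S$ and $\D$, being both Desarguesian, both induce the structure of $\PG(r-1,q^t)$ on their elements, and any two field-reduced copies of $\PG(r-1,q^t)$ inside $\PG(rt-1,q)$ obtained from (possibly different) vfr-maps are $\GL(rt,q)$-equivalent by the first Lemma of the excerpt; the general case reduces to this after applying the FTPG-supplied semilinear map.

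For the degenerate cases I would handle $r=2$ separately, invoking the cited result \cite[Theorem~25.6.7]{Hirschfeld-Thas}: for spreads of $\mathbb{P}_q^{2t}$ the Desarguesian property is defined via the Andr\'e/Bruck--Bose construction yielding $\PG(3,q^t)$, and there the classical theorem already gives $\PGL$-equivalence, hence a fortiori $\PGammaL$-equivalence; the case $r=1$ is vacuous. Assembling: transitivity of ``is $\PGammaL(rt,q)$-equivalent to $\D$'' finishes the proof.

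The main obstacle I anticipate is the lifting step: going from an abstract isomorphism of the quotient (spread) geometries to an honest collineation of $\Pq$ that permutes spread elements accordingly. The subtlety is that the quotient geometry only sees the spread elements and the subspaces they span, not the internal points of a $(t-1)$-space, so one must check that the matching of spread elements can be promoted coherently across all spanned subspaces without contradiction — this is exactly where normality/geometricity of the Desarguesian spread is essential, and where one must be careful that the chosen semilinear bijections on individual spread elements are mutually compatible. In the write-up I would either cite the cited sources (\cite{bader}, \cite{Hirschfeld-Thas}) for this lifting, or reduce it cleanly to the ``two vfr-maps are $\GL$-equivalent'' lemma already proved, rather than reconstruct the FTPG-style argument from scratch.
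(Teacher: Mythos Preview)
The paper does not actually prove Theorem~\ref{desequiv}: it is stated as a known result, with a citation to \cite[Theorem~25.6.7]{Hirschfeld-Thas} for $r=2$ and to \cite[Corollary~16]{bader} for general $r$. So there is no ``paper's own proof'' to compare against; your proposal is an attempt to supply one.

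Your diagnosis of the obstacle is accurate, and in fact that obstacle is the entire content of the theorem. Knowing that the quotient incidence structures built on $\S$ and on $\D$ are both abstractly isomorphic to $\PG(r-1,q^t)$ does \emph{not} by itself yield a collineation of $\PG(rt-1,q)$ carrying $\S$ to $\D$: the definition of ``Desarguesian'' used in the paper only pins down the quotient geometry, not the way the spread sits inside $\Pq$. What one really needs is that every Desarguesian $(t-1)$-spread of $\Pq$ arises from \emph{some} vfr-map $\Ff':\F_{q^t}^r\to\F_q^{rt}$; once that is known, the first lemma of Section~2 (two vfr-maps differ by an element of $\GL(rt,q)$) finishes immediately. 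Your sketch of ``patching $\F_q$-semilinear bijections on individual spread elements coherently using normality'' is pointing at exactly this, but it is not a routine verification: you must reconstruct an $\F_{q^t}$-scalar multiplication on $\F_q^{rt}$ from the spread $\S$ alone, and check it is compatible with the existing $\F_q$-structure. That reconstruction is precisely what the cited references carry out. So your two proposed endgames --- cite \cite{bader,Hirschfeld-Thas}, or reduce to the vfr-equivalence lemma --- are really the same option, since the reduction itself is the cited theorem. As written, the proposal is a correct outline with the hard step delegated, which is also what the paper does.
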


Up to our knowledge, the statement that all Desarguesian $(t-1)$-spreads are $\PGL$-equivalent is not in the literature for $r>2$. However, this statement follows from $\PGammaL$-equivalence, and the following lemma.

\begin{lemma} Let $\mathcal{S}$ be a $(t-1)$-spread of $\Pq$ and let $\D$ be the Desarguesian $(t-1)$-spread obtained in Lemma \ref{lem6} from the field reduction map $\Ff: \Fqtr\to \F_q^{rt}$. If there exists an element $\la \phi\ra_{q}$ of $\PGammaL(rt,q)$ such that $\la\phi\ra_q(\mathcal{S})=\D$, then there exists an element $\la \phi'\ra_q$ of $\PGL(rt,q)$ such that $\la \phi'\ra_q(\mathcal{S})=\D$.
\end{lemma}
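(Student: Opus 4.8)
The plan is to correct the semilinear part of $\phi$ by composing it with an element of $\GammaL(rt,q)$ of the shape $\Ff(\xi_0)$, which by Lemma \ref{omgekeerd2} automatically stabilises $\D$. Write $q=p^{h}$ with $p$ prime. The coset $\la\phi\ra_{q}$ has a well-defined associated field automorphism $\theta\in\Aut(\F_{q})$: any representative $m_{\l}\phi$ with $\l\in\F_{q}^{*}$ has the same associated automorphism, since $m_{\l}$ is $\F_{q}$-linear. Write $\theta\colon x\mapsto x^{p^{i}}$.

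First I would realise $\theta$ inside the stabiliser of $\D$. Since $\F_{q}$ is a subfield of $\F_{q^{t}}$, the map $\psi\colon x\mapsto x^{p^{i}}$ is an element of $\Aut(\F_{q^{t}})$ whose restriction to $\F_{q}$ is $\theta$. Fix a basis $B$ of $\Fqtr$ and put $\xi_{0}:=(id,\psi)_{B}\in\GammaL(r,q^{t})$ (Lemma \ref{lem10}). Running through the computation in the proof of Theorem \ref{embedding}, for $\l\in\F_{q}$ one gets $\Ff(\xi_{0})(\l\,\Ff(v))=\psi(\l)\,\Ff(\xi_{0})(\Ff(v))=\theta(\l)\,\Ff(\xi_{0})(\Ff(v))$, so the field automorphism of $\F_{q}$ associated with $\Ff(\xi_{0})\in\GammaL(rt,q)$ is exactly $\theta$. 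By Lemma \ref{omgekeerd2}, $\la\Ff(\xi_{0})\ra_{q}$ stabilises $\D$; applying the same lemma to $\xi_{0}^{-1}\in\GammaL(r,q^{t})$ and using that $\Ff$ is a homomorphism (Theorem \ref{embedding}), $\la\Ff(\xi_{0}^{-1})\ra_{q}=\la\Ff(\xi_{0})\ra_{q}^{-1}$ also stabilises $\D$.

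Finally set $\phi':=\Ff(\xi_{0})^{-1}\phi$. The field automorphism associated with a composite of semilinear maps is the composite of the associated automorphisms, and the one associated with an inverse is the inverse; hence $\phi'$ has associated automorphism $\theta^{-1}\circ\theta=id$, i.e.\ $\phi'\in\GL(rt,q)$ and $\la\phi'\ra_{q}\in\PGL(rt,q)$. Moreover
$\la\phi'\ra_{q}(\mathcal{S})=\la\Ff(\xi_{0}^{-1})\ra_{q}\bigl(\la\phi\ra_{q}(\mathcal{S})\bigr)=\la\Ff(\xi_{0}^{-1})\ra_{q}(\D)=\D$, as required.

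I do not anticipate a genuine obstacle: the only points that need care are the bookkeeping of associated automorphisms under composition and inversion, and the observation — essentially already contained in the proof of Theorem \ref{embedding} — that the $\F_{q}$-automorphism attached to $\Ff(\xi_{0})$ is just the restriction to $\F_{q}$ of the $\F_{q^{t}}$-automorphism $\psi$ attached to $\xi_{0}$, which is why $\theta$ can always be hit by some $\Ff(\xi_0)$.
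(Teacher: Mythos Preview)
Your proof is correct and follows essentially the same route as the paper: extend the $\F_q$-automorphism $\theta$ attached to $\phi$ to a $\F_{q^t}$-automorphism $\psi$, build $\xi_0=(id,\psi)_B\in\GammaL(r,q^t)$ via Lemma~\ref{lem10}, and precompose $\phi$ with $\Ff(\xi_0)^{-1}$, using Lemma~\ref{omgekeerd2} to see that this factor stabilises $\D$. The only cosmetic difference is that the paper works directly with $\xi_{\tilde\psi^{-1}}=(id,\tilde\psi^{-1})_B$ and verifies $\F_q$-linearity of the composite by an explicit computation, whereas you take the inverse afterwards and appeal to the composition rule for associated automorphisms; since $(id,\psi)_B^{-1}=(id,\psi^{-1})_B$, the two arguments coincide.
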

\begin{proof} Let $\phi\in \GammaL(rt,q)$ with $\la \phi\ra_q(\mathcal{S})=\D$ and suppose that $\psi$ is the field automorphism in $\Aut(\F_q)$, associated with $\phi$. Since $\psi\in \Aut(\F_q)$, $\psi$ is a mapping which maps every $x\in \F_q$ onto $x^{p^k}$, for some fixed integer $k$. Consider the automorphism $\tilde{\psi}$ of $\Aut(\F_{q^t})$, which maps every element $y \in \F_{q^t}$ onto $y^{p^k}$, then it is clear that $\tilde{\psi}(x)=\psi(x)$ for $x\in \F_q$, and hence, $\tilde{\psi}^{-1}(x)=\psi^{-1}(x)$ for $x\in \F_q$.
Pick a basis $B$ for $\F_{q^t}^r$ and consider the element $\la \Ff(\xi_{\tilde{\psi}^-1})\ra_q$ of $\PGammaL(r,q^t)$, where $\xi_{\tilde{\psi}^{-1}}$ is the element of $\GammaL(r,q^t)$ defined as $\xi_{\tilde{\psi}^{-1}}=(id,\tilde{\psi}^{-1})_B$ as in Lemma \ref{lem10}. 

Consider now the element $\la \Ff(\xi_{\tilde{\psi}^-1})\ra_q \la\phi\ra_q$ $=\la \Ff(\xi_{\tilde{\psi}^-1}) \phi\ra_q$ of $\PGammaL(rt,q)$. Then $(\Ff(\xi_{\tilde{\psi}^-1}) \phi) (\lambda v)$, where $\lambda \in \F_q$ and $v\in \F_{q}^{rt}$, equals $\Ff(\xi_{\tilde{\psi}^-1}) \psi(\lambda)\phi(v)$ $=\tilde{\psi}^{-1}(\psi(\lambda))\Ff(\xi_{\tilde{\psi}^-1})\phi(v)$. Since $\lambda\in \F_q$, $\psi(\lambda)\in \F_q$ and $\tilde{\psi}^{-1}(\psi(\lambda))=\psi^{-1}(\psi(\lambda))=\lambda$, and hence, $\Ff(\xi_{\tilde{\psi}^-1}) \phi (\lambda v)=\lambda \Ff(\xi_{\tilde{\psi}^-1})\phi (v)$. This implies that the field automorphism associated with $\la \Ff(\xi_{\tilde{\psi}^-1}) \phi\ra_q$ is the trivial one, and hence, $\la \Ff(\xi_{\tilde{\psi}^-1}) \phi\ra_q$ is an element $\la \phi'\ra_q$ of $\PGL(rt,q)$. Moreover, since by Lemma \ref{omgekeerd2}, $\la \Ff(\xi_{\tilde{\psi}^-1})\ra_q$ stabilises $\D$, we have that  $\la \phi'\ra_q(\mathcal{S})=\la \Ff(\xi_{\tilde{\psi}^-1}) \phi\ra_q(\mathcal{S})=\la \Ff(\xi_{\tilde{\psi}^-1})\ra_q\la \phi\ra_q(\mathcal{S})=\la \Ff(\xi_{\tilde{\psi}^-1})\ra_q(\D)=\D$.
\end{proof}

\begin{corollary}\label{desequiv2} All Desarguesian $(t-1)$-spreads in $\Pq$ are $\PGL(rt,q)$-equivalent.
\end{corollary}

We have seen that to study the stabiliser of a Desarguesian spread, it is sufficient to consider only one particular Desarguesian spread, so we will restrict ourselves to study of the stabiliser of the Desarguesian spread $\D$ obtained from $\Ff: \Fqtr\to \F_q^{rt}$ in Lemma \ref{lem6}.

\begin{theorem} \label{stabiliserspread} Let $\D$ be the Desarguesian $(t-1)$-spread in $\Pq$ obtained from $\Ff: \Fqtr\to \F_q^{rt}$. Suppose that $\la \phi\ra_q$ is an element of $\PGammaL(rt,q)$ stabilising $\D$ elementwise. Then $\phi=\Ff(m_\beta)$, for some $\beta\in \F_{q^t}^*$, where $m_\beta$ is the element of $\GammaL(r,q^t)$ mapping a vector $v$ onto  the vector $\beta v$. Let $\la \phi\ra_q$ be an element of $\PGammaL(rt,q)$ stabilising $\D$, then 
$\phi=\Ff(\xi)$ for some $\xi$ in $\GammaL(r,q^t)$. If $\la\phi\ra_q$ is an element of $\PGL(rt,q)$ stabilising $\D$, then the associated field automorphism $\psi$ fixes every element of $\F_q$.
\end{theorem}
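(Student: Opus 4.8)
The plan is to lift the statement about a projective element $\la\phi\ra_q$ stabilising the spread $\D$ to a statement about a linear representative $\phi\in\GammaL(rt,q)$ stabilising the \emph{vector} space partition underlying $\D$, and then quote Theorem \ref{stabiliservector}. The key observation is that $\D=\{\Fb(\la v\ra_{q^t})\mid v\in\F_{q^t}^{r*}\}$ is, by Lemma \ref{Bpunt} and Lemma \ref{lem6}, precisely the projectivisation of the Desarguesian vector space partition $\S=\{\{\Ff(\alpha v)\mid\alpha\in\F_{q^t}\}\mid v\in\F_{q^t}^{r*}\}$: each $(t-1)$-space $\Fb(\la v\ra_{q^t})$ of $\D$ has as its vectors exactly the $t$-dimensional $\F_q$-space $\{\Ff(\alpha v)\mid\alpha\in\F_{q^t}\}$ of $\S$. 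Hence, for $\phi\in\GammaL(rt,q)$, the projective map $\la\phi\ra_q$ stabilises $\D$ (respectively stabilises $\D$ elementwise) if and only if the linear map $\phi$ stabilises $\S$ (respectively stabilises $\S$ elementwise): a $t$-dimensional $\F_q$-subspace is recovered from the point set of the corresponding $(t-1)$-dimensional projective subspace together with $0$, so no information is lost in passing between $\phi$ and $\la\phi\ra_q$ here, and the choice of linear representative of $\la\phi\ra_q$ is irrelevant since any two differ by a scalar $m_\l$ ($\l\in\F_q^*$), which fixes every $\F_q$-subspace.

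With this dictionary in place, the three assertions follow immediately. First, if $\la\phi\ra_q$ stabilises $\D$ elementwise, pick any representative $\phi\in\GammaL(rt,q)$; then $\phi$ stabilises $\S$ elementwise, so by Theorem \ref{vectorelementwise} we have $\phi=\Ff(m_\beta)$ for some $\beta\in\F_{q^t}^*$. Second, if $\la\phi\ra_q$ merely stabilises $\D$, pick a representative $\phi$; then $\phi$ stabilises $\S$, so by Theorem \ref{stabiliservector} we get $\phi=\Ff(\xi)$ for some $\xi\in\GammaL(r,q^t)$. Third, if moreover $\la\phi\ra_q\in\PGL(rt,q)$, then we may choose the representative $\phi$ to lie in $\GL(rt,q)$ (the kernel $Z$ of $\GammaL(rt,q)\to\PGammaL(rt,q)$ restricted to $\GL$ is still the scalars, and $\PGL(rt,q)=\GL(rt,q)/Z$ sits inside $\PGammaL(rt,q)$ as those classes with a linear representative), and then the final clause of Theorem \ref{stabiliservector} says the field automorphism $\psi$ associated with $\xi$ fixes every element of $\F_q$.

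I expect the only point requiring care—and the natural candidate for the ``main obstacle''—is the claim that the elementwise/setwise stabiliser is genuinely insensitive to the choice of linear representative, i.e. that $\la\phi\ra_q$ stabilising $\D$ (elementwise) is \emph{equivalent} to \emph{some}, equivalently \emph{every}, representative $\phi$ stabilising $\S$ (elementwise). The forward direction uses that scalars $m_\l$, $\l\in\F_q^*$, fix every member of $\S$; the reverse direction uses Lemma \ref{Bpunt} to see that the projective subspace spanned by $\{\la\Ff(\alpha v)\ra_q\mid\alpha\in\F_{q^t}^*\}$ is exactly $\Fb(\la v\ra_{q^t})$, so a $\phi$ permuting the members of $\S$ induces a $\la\phi\ra_q$ permuting the members of $\D$. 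Everything else is a direct appeal to the already-proved Theorems \ref{vectorelementwise} and \ref{stabiliservector}, together with Lemma \ref{omgekeerd2} for the (here unneeded) converse direction. No new computation is involved beyond unwinding these definitions.
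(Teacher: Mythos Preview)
Your proposal is correct and follows exactly the paper's own approach: lift the projective statement to the vector space level by observing that $\la\phi\ra_q$ stabilises $\D$ (elementwise) if and only if any representative $\phi$ stabilises the Desarguesian vector space partition (elementwise), and then invoke Theorems~\ref{vectorelementwise} and~\ref{stabiliservector}. The paper's proof is in fact terser than yours, compressing the projective-to-vector passage into the single observation that $\la\phi(U_1)\ra_q=\la U_2\ra_q$ forces $\phi(U_1)=U_2$.
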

\begin{proof} Suppose that $\la \phi \ra_q (\pi_1)=\pi_2$ for elements $\pi_1,\pi_2\in \D$. Since $\pi_i$ is an $\F_q$-subspace of $\Pq$, we have that $\pi_i=\la U_i\ra_q$ for some $U_i\in \F_{q}^{rt}$, $i=1,2$. This implies that $\la \phi \ra_q (\la U_1\ra_q)=\la \phi(U_1)\ra_q=\la U_2\ra_q$, and hence, that $\phi(U_1)=\lambda U_2=U_2$ for some $\lambda \in \F_q$. Hence, $\phi$ is an element of the stabiliser of the Desarguesian vector space partition and by Theorems \ref{vectorelementwise} and \ref{stabiliservector}, the theorem follows.
\end{proof}

\begin{lemma} \label{transitive}Let $\D$ be the Desarguesian $(t-1)$-spread in $\Pq$ obtained from $\Ff: \Fqtr\to \F_q^{rt}$. The elementwise stabiliser of $\D$ in $\PGammaL(rt,q)$ acts sharply transitively on the points of an element of $\D$.
\end{lemma}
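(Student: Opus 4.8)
The plan is to first pin down the elementwise stabiliser exactly and then compute directly. Combining Lemma~\ref{omgekeerd2} (which shows that every $\la\Ff(m_\beta)\ra_q$, $\beta\in\F_{q^t}^*$, stabilises $\D$ elementwise) with Theorem~\ref{stabiliserspread} (which shows that every element of $\PGammaL(rt,q)$ fixing each member of $\D$ has the form $\la\Ff(m_\beta)\ra_q$), I obtain that the elementwise stabiliser $G$ of $\D$ is precisely $\{\la\Ff(m_\beta)\ra_q\mid\beta\in\F_{q^t}^*\}$. Since $G$ is the \emph{elementwise} stabiliser, it fixes in particular any chosen member $\pi\in\D$ setwise, so it does act on the points of $\pi$.

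Next I would fix such a $\pi$. By Lemma~\ref{Bpunt} together with the construction of $\D$ in Lemma~\ref{lem6}, I may write $\pi=\Fb(\la v\ra_{q^t})=\{\la\Ff(\alpha v)\ra_q\mid\alpha\in\F_{q^t}^*\}$ for some $v\in\Fqtr\setminus\{0\}$, a set of $(q^t-1)/(q-1)$ points. The action is then immediate: $\la\Ff(m_\beta)\ra_q\,\la\Ff(\alpha v)\ra_q=\la\Ff(m_\beta(\alpha v))\ra_q=\la\Ff(\beta\alpha v)\ra_q$. For transitivity, given two points $\la\Ff(\alpha_1 v)\ra_q$ and $\la\Ff(\alpha_2 v)\ra_q$ of $\pi$, the element $\la\Ff(m_{\alpha_2\alpha_1^{-1}})\ra_q\in G$ maps the first onto the second, because $\Ff\big(\alpha_2\alpha_1^{-1}\cdot\alpha_1 v\big)=\Ff(\alpha_2 v)$.

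Finally I would verify that the action is free, which together with transitivity gives sharp (regular) transitivity. Suppose $\la\Ff(m_\beta)\ra_q$ fixes the point $\la\Ff(\alpha v)\ra_q$; then $\Ff(\beta\alpha v)=\l\,\Ff(\alpha v)$ for some $\l\in\F_q^*$, and since $\Ff$ is $\F_q$-linear this is $\Ff(\beta\alpha v)=\Ff(\l\alpha v)$. Injectivity of $\Ff$ forces $\beta\alpha v=\l\alpha v$, hence $\beta=\l\in\F_q^*$, so $\Ff(m_\beta)=\l\cdot\mathrm{id}$ is a scalar and $\la\Ff(m_\beta)\ra_q$ is the identity of $\PGammaL(rt,q)$. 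There is no real obstacle here: the substance is already contained in Theorem~\ref{stabiliserspread}, and what remains is bookkeeping; the only point needing a little care is the step from $\Ff(\beta\alpha v)=\l\,\Ff(\alpha v)$ to $\beta=\l$, which uses both the $\F_q$-linearity and the injectivity of the vfr-map. (Alternatively, one could skip the freeness check by noting $|G|=|\F_{q^t}^*/\F_q^*|=(q^t-1)/(q-1)$ equals the number of points of $\pi$, so that transitivity already yields regularity.)
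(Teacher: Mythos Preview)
Your proposal is correct and follows essentially the same approach as the paper: both identify the elementwise stabiliser via Theorem~\ref{stabiliserspread} and Lemma~\ref{omgekeerd2}, and prove transitivity using the element $\la\Ff(m_{\alpha_2\alpha_1^{-1}})\ra_q$. The only cosmetic difference is that the paper establishes sharpness via the counting argument you mention parenthetically (showing $|G|=(q^t-1)/(q-1)$ equals the number of points of a spread element), whereas you first give a direct freeness computation and then note the counting alternative; both are equally valid.
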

\begin{proof} Let $P$ and $Q$ be two points of the same spread element of $\D$. Then $P=\la \Ff(\alpha_1 v)\ra_q$ and $Q=\la \Ff(\alpha_2 v)\ra_q$ for some $\alpha_1,\alpha_2\in \F_{q^t}^\ast$ and $v\in \F_{q^t}^{r*}$. Consider the element $m_{\frac{\alpha_2}{\alpha_1}}$ of $\GL(r,q^t)$, mapping a vector $w$ onto $\frac{\alpha_2}{\alpha_1}w$. 
Now $\la \Ff(m_{\frac{\alpha_2}{\alpha_1}})\ra_q\la \Ff(\alpha_1 v)\ra_q=\la \Ff(\frac{\alpha_2}{\alpha_1}\alpha_1 v)\ra_q=\la \Ff(\alpha_2 v)\ra_q$ and $\la \Ff(m_{\frac{\alpha_2}{\alpha_1}})\ra_q$ stabilises $\D$ elementwise by Lemma \ref{omgekeerd2}. This shows that the elementwise stabiliser acts transitively on the points of a spread element. Now every element of the elementwise stabiliser in $\PGammaL(rt,q)$ of $\D$ is of the form $\la \Ff(m_\beta)\ra_q$ for some $\beta\in \F_{q^t}^*$, so the stabiliser of $\D$ contains $\frac{q^t-1}{q-1}$ different elements, which is the number of points in one spread element, showing that the elementwise stabiliser acts sharply transitively on the points of a spread element of $\D$.
\end{proof}

For $\PGL(rt,q)$, the following  theorem was already proven in \cite{Dye}, by using different methods.

\begin{theorem}\label{Dye} Let $m_\beta$ be the element of $\GammaL(r,q^t)$ mapping a vector $v$ of $\F_{q^t}$ onto $\beta v$. Let $Z=\{\langle m_\l\rangle_q\mid\l\in \F_q^*\}$. The stabiliser in $\PGammaL(rt,q)$ of a Desarguesian spread in $\Pq$ is isomorphic to $\GL(r,q^t)/Z\rtimes \Aut(\F_{q^t})$. The stabiliser in $\PGL(rt,q)$ of a Desarguesian $(t-1)$-spread in $\Pq$ is isomorphic to $\GL(r,q^t)/Z\rtimes \Aut(\F_{q^t}/\F_q)$.
\end{theorem}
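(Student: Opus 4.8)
The plan is to produce an explicit surjective group homomorphism from $\GammaL(r,q^t)$ onto the spread stabiliser, compute its kernel, and then read off both isomorphisms from the semidirect decomposition $\GammaL(r,q^t)\cong\GL(r,q^t)\rtimes\Aut(\F_{q^t})$ proved earlier. By Corollary~\ref{desequiv2} all Desarguesian $(t-1)$-spreads of $\Pq$ are $\PGL(rt,q)$-equivalent, so their stabilisers are conjugate, hence isomorphic, and we may assume throughout that the spread is the spread $\D$ of Lemma~\ref{lem6} obtained from the vfr-map $\Ff$. By Lemma~\ref{omgekeerd2} every $\langle\Ff(\xi)\rangle_q$ with $\xi\in\GammaL(r,q^t)$ stabilises $\D$, and by Theorem~\ref{stabiliserspread} every element of the stabiliser of $\D$ in $\PGammaL(rt,q)$ has a representative of the form $\Ff(\xi)$ with $\xi\in\GammaL(r,q^t)$; hence the stabiliser $G$ of $\D$ in $\PGammaL(rt,q)$ equals $\{\langle\Ff(\xi)\rangle_q\mid\xi\in\GammaL(r,q^t)\}$.

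Next I would define $\Theta\colon\GammaL(r,q^t)\to\PGammaL(rt,q)$, $\xi\mapsto\langle\Ff(\xi)\rangle_q$. This is a homomorphism: $\Ff$ is a group embedding $\GammaL(r,q^t)\hookrightarrow\GammaL(rt,q)$ by Theorem~\ref{embedding}, and $\Theta$ is the composition of this embedding with the natural projection $\GammaL(rt,q)\to\PGammaL(rt,q)$; its image is $G$ by the previous paragraph. For the kernel, $\Theta(\xi)$ is trivial iff $\Ff(\xi)=m_\lambda$ (scalar multiplication on $\F_q^{rt}$ by some $\lambda\in\F_q^*$), and the defining commutative diagram of $\Ff(\xi)$ then forces $\Ff(\xi(v))=\Ff(\lambda v)$ for all $v$, i.e. $\xi(v)=\lambda v$ for all $v$, i.e. $\xi=m_\lambda$ with $\lambda\in\F_q^*$. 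Thus $\ker\Theta=Z$ and $G\cong\GammaL(r,q^t)/Z$. Since the elements of $Z$ are $\F_{q^t}$-scalar maps, $Z\le\GL(r,q^t)\trianglelefteq\GammaL(r,q^t)$ and $Z$ meets the complement $\{(\mathrm{id},\psi)_B\mid\psi\in\Aut(\F_{q^t})\}$ trivially, so the decomposition $\GammaL(r,q^t)=\GL(r,q^t)\rtimes\Aut(\F_{q^t})$ descends to $G\cong(\GL(r,q^t)/Z)\rtimes\Aut(\F_{q^t})$, which is the first assertion.

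For the second assertion I would restrict $\Theta$. The proof of Theorem~\ref{embedding} shows that the field automorphism associated with $\Ff(\xi)\in\GammaL(rt,q)$ is the restriction to $\F_q$ of the automorphism $\psi$ associated with $\xi$, so $\langle\Ff(\xi)\rangle_q\in\PGL(rt,q)$ exactly when $\psi\in\Aut(\F_{q^t}/\F_q)$; combined with the last sentence of Theorem~\ref{stabiliserspread}, this identifies the stabiliser of $\D$ in $\PGL(rt,q)$ with $\Theta(H)$, where $H=\{\xi\in\GammaL(r,q^t)\mid\psi_\xi\in\Aut(\F_{q^t}/\F_q)\}=\GL(r,q^t)\rtimes\Aut(\F_{q^t}/\F_q)$ (using that $\Aut(\F_{q^t}/\F_q)$ is the subgroup of $\Aut(\F_{q^t})$ generated by $x\mapsto x^q$). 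Since $Z\le\GL(r,q^t)\le H$ and $Z$ is again disjoint from the chosen complement, the semidirect structure of $H$ passes to the quotient and $\Theta(H)\cong(\GL(r,q^t)/Z)\rtimes\Aut(\F_{q^t}/\F_q)$, as claimed.

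The main obstacle is essentially bookkeeping rather than a new idea: one must check carefully that $Z$ is normal, lies inside the normal factor $\GL(r,q^t)$, and is disjoint from the chosen complement, so that the semidirect product genuinely survives the quotient by $Z$; and one must keep straight the relation between the automorphism of $\Ff(\xi)$ acting on $\F_q$ and the automorphism of $\xi$ acting on $\F_{q^t}$, so that the distinction between the $\PGL$- and $\PGammaL$-stabilisers comes out correctly as $\Aut(\F_{q^t}/\F_q)$ versus $\Aut(\F_{q^t})$. Beyond this, everything reduces to Theorem~\ref{embedding}, Lemma~\ref{omgekeerd2} and Theorem~\ref{stabiliserspread} together with Corollary~\ref{desequiv2}.
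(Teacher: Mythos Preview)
Your proposal is correct and uses essentially the same ingredients and overall strategy as the paper: reduce to $\D$ via Corollary~\ref{desequiv2}, invoke Lemma~\ref{omgekeerd2} and Theorem~\ref{stabiliserspread} to identify the stabiliser with the image of $\GammaL(r,q^t)$ under $\xi\mapsto\langle\Ff(\xi)\rangle_q$, and then separate the $\GL$- and $\Aut$-parts. The only organisational difference is that the paper builds the splitting directly on the stabiliser via the map $f\colon\langle\Ff(\xi)\rangle_q\mapsto\psi_\xi$ with kernel $\GL(r,q^t)/Z$, whereas you first realise the stabiliser as $\GammaL(r,q^t)/Z$ and then pass the semidirect decomposition through the quotient; both routes amount to the same verification.
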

\begin{proof} Let $\D'$ be a Desarguesian $(t-1)$-spread in $\Pq$. By Corollary \ref{desequiv2}, there is an element $\psi\in \PGL(rt,q)$ such that $\psi(\D)=\D'$, with $\D$ the spread obtained from $\Ff$ in Lemma \ref{lem6}. This implies that for every element $\psi' \in \PGammaL(rt,q)$ stabilising $\D'$, $\psi^{-1}\psi'\psi$ stabilises $\D$. Moreover, $\psi^{-1}\psi'\psi$ is an element of $\PGL(rt,q)$ if and only if $\psi'\in \PGL(rt,q)$. This implies that the stabiliser in $\PGL(rt,q)$ (resp. in $\PGammaL(rt,q)$) of $\D'$ is isomorphic to the stabiliser of $\D$ in $\PGL(rt,q)$ (resp. in $\PGammaL(rt,q)$).
The group $Z$, i.e. the group of $\F_q$-scalar maps is a normal subgroup of $\GL(r,q^t)$, so we may consider the quotient group $\GL(r,q^t)/Z=\{m_\l \xi'\mid\l\in \F_q^*,\xi'\in \GL(r,q^t)\}$. The group $\GL(r,q^t)/Z$ can be embedded in $\PGL(rt,q)$ by mapping an element $\la \xi\ra_q$ onto $\la \Ff(\xi)\ra_q$ (note that $\Ff$ provides an embedding of $\GL(r,q^t)$ in $\GL(rt,q)$ and $\PGL(rt,q)\cong \GL(rt,q)/Z$). 
By Theorem \ref{stabiliserspread}, every element $\langle\phi\rangle_q$ of the stabiliser of a Desarguesian spread in $\PGammaL(rt,q)$ is of the form $\langle \phi\rangle_q=\langle\Ff(\xi)\rangle_q$, with $\xi\in \GammaL(r,q^t)$. 

Suppose that $\xi$ has associated field automorphism $\psi$. Since $\Ff$ is an embedding of $\GammaL(r,q^t)$ into $\GammaL(rt,q)$, $\xi$ is the unique element of $\GammaL(rt,q)$ such that $\phi=\Ff(\xi)$. Now $(m_\beta \xi) (\alpha v)=m_\beta(\psi(\alpha)\xi(v))=\beta\psi(\alpha)\xi(v)=\psi(\alpha)(m_\beta \xi)(v)$, hence, $m_\beta \xi$ also has $\psi$ as associated field automorphism, implying that the map $f$ which maps an element $\langle\phi\rangle_q=\la\Ff(\xi)\ra_q$ of the stabiliser of $\D$ in $\PGammaL(rt,q)$ onto the associated field autormorphism $\psi$ of $\xi$ is well-defined.
 By Lemma \ref{omgekeerd2}, we have that every element of $\Aut(\F_q)$ is the image under $f$ of some element $\la \phi\ra_q$ from the stabiliser of $\D$ in $\PGammaL(rt,q)$. The kernel of this map are all elements $\la \Ff(\xi)\ra_q$ with trivial field automorphism hence, this is exactly $\GL(r,q^t)/Z$. When considering the restriction of $f$ to the elements of the stabiliser of $\D$ in $\PGL(rt,q)$, one sees that the kernel is again $\GL(r,q^t)/Z$, but the image is now $\Aut(\F_{q^t}/\F_q)$.
%
\end{proof}

\section{Singer cycles and Desarguesian spreads}



A {\em Singer cycle} of $\P_q^n$ is an element $\sigma$ of $\PGL(n,q)$ such that the group $\langle \sigma\rangle$ acts sharply transitively on the points of $\P_q^n$; the group $\langle \sigma \rangle$ is then called a {\em Singer group}.
The aim of this section is to give a complete proof using the framework developed in this paper for the following theorem of Drudge \cite{Drudge}, relating Singer groups of $\PGL(n,q)$ with Desarguesian spreads.
\begin{theorem} \label{tdr}A Singer cycle $\sigma$ of $\PG(n-1,q)$ has an orbit which is a $(d-1)$-spread if and only if $d\mid n$. In this case there is exactly one such orbit $\S$; the $G$-stabiliser, where $G=\la \sigma\ra$ of any $S\in \S$ is $Stab_G(S)=\langle \sigma^{\theta_n/\theta_d}\rangle$. $\S$ is a geometric $(d-1)$-spread and the factor group $G/Stab_G(S)$ is a Singer group of the corresponding $\PG(n/d-1,q^d)$. Here, $\theta_n=\frac{q^n-1}{q-1}$.
\end{theorem}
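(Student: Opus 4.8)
The plan is to reduce the statement to a concrete field model and then translate the question about orbits on subspaces into elementary field theory.

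First I would pass to a convenient model for the Singer group $G=\langle\sigma\rangle$. Since $G$ is cyclic of order $\theta_n$ and acts regularly on the $\theta_n$ points of $\PG(n-1,q)$, its preimage $\widehat G$ in $\GL(n,q)$ is abelian (it is generated by a lift of $\sigma$ together with the central group $Z$ of $\F_q$-scalars) and has order $\theta_n(q-1)=q^n-1$, and it acts irreducibly on $\F_q^n$ (an invariant proper nonzero subspace would give a proper nonempty $G$-invariant set of points, contradicting transitivity of $G$). By Schur's lemma the $\F_q$-algebra generated by $\widehat G$ inside $\mathrm{End}_{\F_q}(\F_q^n)$ is a commutative division algebra, hence a finite field, necessarily $\cong\F_{q^n}$, and $\widehat G$ is its full multiplicative group. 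Thus, after identifying $\F_q^n$ with $\F_{q^n}$, we may assume $G=\{\la m_\omega\ra_q\mid\omega\in\F_{q^n}^*\}\cong\F_{q^n}^*/\F_q^*$, where $m_\omega\colon x\mapsto\omega x$. (Alternatively one may invoke the classical fact that all Singer groups of $\PG(n-1,q)$ are $\PGL(n,q)$-conjugate.)

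Next I would translate the action of $G$ on $(d-1)$-subspaces into field theory. Write such a subspace as $S=\la W\ra_q$ with $W\le\F_{q^n}$ an $\F_q$-subspace of dimension $d$; then $\la m_\omega\ra_q\in\Stab_G(S)$ if and only if $\omega W=W$. The crucial observation is that $R:=\{\omega\in\F_{q^n}^*\mid\omega W=W\}\cup\{0\}$ is a subfield of $\F_{q^n}$: it is evidently closed under products and inverses, and under sums because a nonzero $\F_{q^n}$-multiple of $W$ has the same $\F_q$-dimension as $W$, so containment in $W$ forces equality. Writing $R=\F_{q^e}$, the space $W$ becomes an $\F_{q^e}$-space, so $e\mid d$ (and $e\mid n$ automatically), and $\Stab_G(S)\cong\F_{q^e}^*/\F_q^*$ has order $\theta_e$. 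Since $G$ is transitive on points, $G\cdot S$ covers every point, so a counting argument gives that $G\cdot S$ is a $(d-1)$-spread if and only if $|\Stab_G(S)|=\theta_d$; as $\theta_e<\theta_d$ whenever $e<d$, this forces $e=d$, which is possible precisely when $\F_{q^d}\subseteq\F_{q^n}$, i.e. $d\mid n$ — and then $W=\F_{q^d}$ provides such an $S$. This proves the `if and only if'.

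Now assume $d\mid n$ and put $r=n/d$, so $\F_{q^n}$ is an $r$-dimensional $\F_{q^d}$-space; let $\Ff\colon\F_{q^d}^r\to\F_q^n$ be a vfr-map arising from this structure. If $G\cdot S$ is a spread, then by the previous paragraph $\Stab_G(S)$ corresponds to a subfield of $\F_{q^n}$ of order $q^d$, and such a subfield is unique, namely $\F_{q^d}$; hence $\Stab_G(S)=\F_{q^d}^*/\F_q^*$, the unique subgroup $H$ of order $\theta_d$ of the cyclic group $\langle\sigma\rangle$, i.e. $H=\langle\sigma^{\theta_n/\theta_d}\rangle$. Such an $S$ is an $\F_{q^d}$-line $\la\F_{q^d}v\ra_q=\Fb(\la v\ra_{q^d})$, and $H$ — being the elementwise stabiliser of the corresponding Desarguesian spread — acts sharply transitively on its $\theta_d$ points by Lemma \ref{transitive}, so $S$ is a single $H$-orbit. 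Since $G$ is abelian it permutes the $H$-orbits on points, and being transitive on points it follows that the elements of $G\cdot S$ are exactly all the $H$-orbits on points, a description not involving $S$. Therefore the spread orbit is unique; it is the Desarguesian spread $\D$ obtained from $\Ff$, hence geometric by the earlier lemma.

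Finally, $H$ is precisely the kernel of the action of $G$ on $\D$, so $G/H$ acts faithfully on $\D$; this incidence structure is $\PG(r-1,q^d)$, the action is transitive (as $G$ is transitive on points, hence on $H$-orbits), and $|G/H|=\theta_n/\theta_d=(q^n-1)/(q^d-1)$ equals the number of points of $\PG(r-1,q^d)$, so the action is sharply transitive. Moreover $\la m_\omega\ra_q$ sends $\Fb(\la v\ra_{q^d})$ to $\Fb(\la\omega v\ra_{q^d})$, so on $\D\cong\PG(\F_{q^d}^r)$ it acts as the image in $\PGL(r,q^d)$ of multiplication by $\omega$ regarded as an $\F_{q^d}$-linear (not merely semilinear) map of $\F_{q^d}^r$; hence $G/H$ is a cyclic subgroup of $\PGL(r,q^d)$ acting sharply transitively on points, i.e. a Singer group of $\PG(n/d-1,q^d)$ (in fact $G/H\cong\F_{q^n}^*/\F_{q^d}^*$). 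I expect the main obstacle to be the dictionary step — recognising $\{\omega\mid\omega W=W\}\cup\{0\}$ as a subfield and deducing $e\mid d$ — since it is this that reduces the divisibility criterion, the uniqueness, and the identification $\Stab_G(S)=\langle\sigma^{\theta_n/\theta_d}\rangle$ to bookkeeping; a secondary point is verifying that the induced action on the quotient space is by projectivities rather than merely by collineations.
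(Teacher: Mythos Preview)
Your proof is correct. Both you and the paper reduce to the field model where $G\cong\F_{q^n}^*/\F_q^*$ acts on $\PG(\F_{q^n})$ by multiplication and then exploit subfield structure, but the executions differ at each step. For the reduction, you use Schur's lemma to identify $\widehat G$ with $\F_{q^n}^*$ intrinsically, whereas the paper (Lemma~\ref{toegevoegd}) cites the classical conjugacy of Singer subgroups and then carries a conjugating element $\xi\in\PGL(n,q)$ through the remainder of the argument (Theorem~\ref{drud}). For the characterisation of spread orbits, you observe that $R=\{\omega:\omega W=W\}\cup\{0\}$ is a subfield $\F_{q^e}$ with $e\mid d$, and a point count gives ``spread $\Leftrightarrow e=d$''; the paper instead packages this as Lemma~\ref{orbituniek}, assuming the spread condition up front (its condition (ii)) and concluding that $W$ is a coset of $\F_{q^d}$. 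For uniqueness, your $H$-orbit argument---the spread elements are precisely the point-orbits of $H=\F_{q^d}^*/\F_q^*$, a description independent of $S$---is a clean structural shortcut, while the paper reinvokes Lemma~\ref{orbituniek} to force any competing $\mu$ into the orbit of the reference subspace $\pi$. Your route is more self-contained (no external citation for Singer conjugacy) and more conceptual; the paper's route is more explicit with the stacked vfr-maps $\Ff,\Ff_0$, which suits its programme of illustrating the field-reduction formalism.
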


The existence of Singer groups follows from the following observation.

\begin{lemma} \label{lem8} Let $\Ff_0:\F_{q^{rt}}\to \F_{q^t}^r$ be a vfr-map and let $m_\alpha:\F_{q^{rt}}\to \F_{q^{rt}}: v\mapsto \alpha v$, $\alpha \in \F_{q^{rt}}^\ast$. Then $\la \Ff_0(m_\alpha)\ra_{q^t}$ is an element of $\PGL(r,q^t)$. 

The group $G=\{ \la \Ff_0(m_\alpha)\ra_{q^t}\mid\alpha\in \F_{q^{rt}}^\ast\}$ is a Singer group (acting on the points of $\Pqt$). 
\end{lemma}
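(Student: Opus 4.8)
The plan is to prove the two assertions of Lemma \ref{lem8} in turn, using the field-reduction machinery already set up. First I would recall the setup: $\Ff_0 \colon \F_{q^{rt}} \to \F_{q^t}^r$ is a vector field reduction map, where we now view $\F_{q^{rt}}$ as an $r$-dimensional vector space over $\F_{q^t}$ (so the roles of $q,t,r$ in the earlier sections are being instantiated with base field $\F_{q^t}$, extension degree $r$, and ambient dimension $1$ over the big field $\F_{q^{rt}}$, reduced to dimension $r$ over $\F_{q^t}$). The map $m_\alpha \colon \F_{q^{rt}} \to \F_{q^{rt}}$, $v \mapsto \alpha v$, is an $\F_{q^t}$-linear bijection of the $r$-dimensional $\F_{q^t}$-space $\F_{q^{rt}}$, hence $m_\alpha \in \GL(r,q^t)$; by Theorem \ref{embedding} (with $(r,q^t)$ playing the role of $(rt,q)$ — i.e. applied with parameters so that the big field is $\F_{q^{rt}}$), or simply directly, $\Ff_0(m_\alpha) \in \GL(r,q^t)$, so $\la \Ff_0(m_\alpha)\ra_{q^t} \in \PGL(r,q^t)$. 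This gives the first assertion.

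Next I would check that $G = \{\la \Ff_0(m_\alpha)\ra_{q^t} \mid \alpha \in \F_{q^{rt}}^\ast\}$ is a subgroup of $\PGL(r,q^t)$: the assignment $\alpha \mapsto m_\alpha$ is a group homomorphism $\F_{q^{rt}}^\ast \to \GL(r,q^t)$ (since $m_\alpha m_{\alpha'} = m_{\alpha\alpha'}$), and by Theorem \ref{embedding} the map $\Ff_0$ is an embedding, so $\alpha \mapsto \Ff_0(m_\alpha)$ is a homomorphism $\F_{q^{rt}}^\ast \to \GL(r,q^t)$, and composing with $\GL(r,q^t) \to \PGL(r,q^t)$ we get a homomorphism $\F_{q^{rt}}^\ast \to \PGL(r,q^t)$ whose image is exactly $G$. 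Hence $G$ is a subgroup.

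It then remains to show $G$ acts sharply transitively on the points of $\Pqt = \PG(r-1,q^t)$. The points of $\Pqt$ correspond to the $1$-dimensional $\F_{q^t}$-subspaces of $\F_{q^t}^r$, equivalently (transporting via $\Ff_0^{-1}$) to the sets $\la v\ra_{q^t} := \{\beta v \mid \beta \in \F_{q^t}^\ast\}$ for $v \in \F_{q^{rt}}^\ast$ — these are precisely the cosets of $\F_{q^t}^\ast$ in $\F_{q^{rt}}^\ast$, so there are $\frac{q^{rt}-1}{q^t-1}$ of them, which is $|\Pqt|$. For transitivity: given two nonzero $v, w \in \F_{q^{rt}}$, take $\alpha = w v^{-1} \in \F_{q^{rt}}^\ast$; then $m_\alpha(v) = w$, so $\la \Ff_0(m_\alpha)\ra_{q^t}$ sends the point $\Ff_0(\la v\ra_{q^t})$-type object to that of $w$. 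For sharpness: if $\la \Ff_0(m_\alpha)\ra_{q^t}$ fixes the point corresponding to $\la v\ra_{q^t}$, then $\alpha v = \beta v$ for some $\beta \in \F_{q^t}^\ast$, so $\alpha = \beta \in \F_{q^t}^\ast$, and then $m_\alpha$ is an $\F_{q^t}$-scalar map, so $\la \Ff_0(m_\alpha)\ra_{q^t}$ is the identity in $\PGL(r,q^t)$; equivalently, the kernel of the homomorphism $\F_{q^{rt}}^\ast \to \PGL(r,q^t)$ is exactly $\F_{q^t}^\ast$, so $|G| = \frac{q^{rt}-1}{q^t-1} = |\Pqt|$, and a transitive group of order equal to the number of points acts sharply transitively. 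There is no real obstacle here; the only mild subtlety is keeping the parameter substitution straight (the "$q$" of Theorem \ref{embedding} becomes the "$q^t$" here, and the "$r$" there becomes "$r$", with the auxiliary degree being just $r$ over the top field $\F_{q^{rt}}$), and being careful that the identification of points of $\Pqt$ with cosets of $\F_{q^t}^\ast$ is exactly what $\Ff_0$ provides.
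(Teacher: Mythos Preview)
Your proof is correct and follows essentially the same approach as the paper: establish that $\la \Ff_0(m_\alpha)\ra_{q^t}\in\PGL(r,q^t)$, verify transitivity on the points of $\Pqt$ by taking $\alpha=wv^{-1}$ (the paper writes $u_2=\beta u_1$), and conclude sharp transitivity from the count $|G|=\frac{q^{rt}-1}{q^t-1}=|\Pqt|$. The only cosmetic difference is that you package the counting via the kernel of the homomorphism $\F_{q^{rt}}^\ast\to\PGL(r,q^t)$, whereas the paper first bounds $|G|\le\frac{q^{rt}-1}{q^t-1}$ and then gets equality from transitivity; these are two phrasings of the same argument.
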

\begin{proof} Note that for every element $\alpha \in \F_{q^{rt}}^\ast$, $\la \Ff_0(m_\alpha)\ra_{q^t}$ is the element of $\PGL(r,q^t)$, which maps $\la \Ff_0(v)\ra_{q^t}$ onto $\la \Ff_0(\alpha v)\ra_{q^t}$ and that $\Ff_0$ is an $\F_{q^t}$-linear map. Now if $\alpha'=\l\alpha$, with $\l\in \F_{q^t}$, then $\la \Ff_0(\alpha'v)\ra_{q^t}=\la \Ff_0(\l\alpha v)\ra_{q^t}=\la \l\Ff_0(\alpha v)\ra_{q^t}=\la \Ff_0(\alpha v)\ra_{q^t}$, hence, there are (at most) $\frac{q^{rt}-1}{q^t-1}$ distinct elements in $G$. Now let $\la v_1\ra_{q^t}$ and $\la v_2\ra_{q^t}$ be two points of $\Pqt$, then $v_1=\Ff_0(u_1)$ and $v_2=\Ff_0(u_2)$ for some $u_1,u_2\in\F_{q^{rt}}^\ast$, hence, $u_2=\beta u_1$ for some $\beta \in \F_{q^{rt}}^\ast$. This implies that $\la \Ff_0(m_\beta)\ra_{q^t}$ maps $\la v_1\ra_{q^t}$ onto $\la v_2\ra_{q^t}$, and hence, that $G$ acts transitively on the points of $\Pqt$, which in turn implies that $| G|=\frac{q^{rt}-1}{q^t-1}$ and that $G$ is a Singer group.
\end{proof}
\begin{corollary} Let $\omega\in \F_{q^{rt}}$ be a generator of $\F_{q^{rt}}^\ast$, then $\sigma=\la \Ff_0(m_\omega)\ra_{q^t}$ is a Singer cycle of $\Pqt$.
\end{corollary}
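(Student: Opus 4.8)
The plan is to deduce this at once from Lemma \ref{lem8} by showing that the cyclic group $\langle\sigma\rangle$ generated by $\sigma$ coincides with the Singer group $G=\{\la\Ff_0(m_\alpha)\ra_{q^t}\mid\alpha\in\F_{q^{rt}}^\ast\}$ of that lemma. First I would note that $\Ff_0\colon\F_{q^{rt}}\to\F_{q^t}^r$ is a vfr-map for the extension $\F_{q^{rt}}=\F_{(q^t)^r}$ of $\F_{q^t}$, so by Theorem \ref{embedding} (applied with $q$, $t$, $r$ replaced by $q^t$, $r$, $1$) the map $\Ff_0$ is an embedding of $\GL(1,q^{rt})$ into $\GL(r,q^t)$ that is compatible with composition. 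Hence $\Ff_0(m_\omega)^k=\Ff_0(m_{\omega^k})$ for every integer $k$, and passing to the projective groups, $\sigma^k=\la\Ff_0(m_{\omega^k})\ra_{q^t}$.

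Next, since $\omega$ is a generator of the cyclic group $\F_{q^{rt}}^\ast$, we have $\{\omega^k\mid k\in\mathbb{Z}\}=\F_{q^{rt}}^\ast$, and therefore
$$\langle\sigma\rangle=\{\sigma^k\mid k\in\mathbb{Z}\}=\{\la\Ff_0(m_\alpha)\ra_{q^t}\mid\alpha\in\F_{q^{rt}}^\ast\}=G.$$
By Lemma \ref{lem8}, $\sigma=\la\Ff_0(m_\omega)\ra_{q^t}$ lies in $\PGL(r,q^t)$ and $G$ is a Singer group, i.e. $G=\langle\sigma\rangle$ acts (sharply) transitively on the points of $\Pqt$; indeed $|G|=\frac{q^{rt}-1}{q^t-1}$ equals the number of points of $\Pqt$. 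Thus $\langle\sigma\rangle$ acts sharply transitively on the points of $\Pqt$, which is exactly the definition of $\sigma$ being a Singer cycle of $\Pqt$.

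I do not expect a genuine obstacle here: the corollary is essentially a restatement of Lemma \ref{lem8}. The only step needing a little care is the identification $\langle\sigma\rangle=G$, which relies on the fact that $\Ff_0$ respects the multiplicative structure — so that the powers of $m_\omega$ are precisely the $m_{\omega^k}$ — and this is immediate from the compatibility of $\Ff_0$ with composition established in Theorem \ref{embedding}.
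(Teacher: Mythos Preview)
Your proposal is correct and is exactly the argument the paper leaves implicit: the corollary is stated without proof, and the intended reasoning is precisely that $\Ff_0$ is multiplicative (by Theorem \ref{embedding}), so $\langle\sigma\rangle$ coincides with the group $G$ of Lemma \ref{lem8}, which is a Singer group. There is nothing to add.
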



The following lemma is an easy corollary of the fact that all Singer subgroups are conjugate (see e.g. \cite{Huppert} for $\GL$ or \cite[Section 4.2]{Hirschfeld} for $\PGL$).
\begin{lemma}\label{toegevoegd} Let $\Ff_0:\F_{q^{rt}}\to \F_{q^t}^r$ be a vfr-map. Let $g$ be a Singer cycle of $\Pqt$, then $g=\xi \sigma' \xi^{-1}$, for some $\xi \in \PGL(r,q^t)$ and $\sigma'= \la \Ff_0(m_{\omega'})\ra_{q^t}$, where $\omega'$ is some generator of $\F_{q^{rt}}^\ast$
\end{lemma}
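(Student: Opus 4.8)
The plan is to reduce the statement to the standard fact that all Singer cycles of $\PG(r-1,q^t)$ are conjugate in $\PGL(r,q^t)$, and then to identify one concrete representative with the prescribed form $\la \Ff_0(m_{\omega'})\ra_{q^t}$. First I would recall from Lemma \ref{lem8} and its corollary that, for any generator $\omega$ of $\F_{q^{rt}}^\ast$, the element $\sigma=\la\Ff_0(m_\omega)\ra_{q^t}$ is indeed a Singer cycle of $\Pqt$; this already exhibits a cyclic subgroup $\la\sigma\ra\le\PGL(r,q^t)$ of order $\theta_{rt}/\theta_t=\frac{q^{rt}-1}{q^t-1}$ (in the obvious notation) acting sharply transitively on the $\frac{q^{rt}-1}{q^t-1}$ points of $\Pqt$.

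Next, given an arbitrary Singer cycle $g$ of $\Pqt$, the group $\la g\ra$ is a Singer group, hence by the cited conjugacy of Singer subgroups (e.g.\ \cite{Huppert}, \cite[Section 4.2]{Hirschfeld}) there is $\xi\in\PGL(r,q^t)$ with $\xi^{-1}\la g\ra\xi=\la\sigma\ra$. Thus $\xi^{-1}g\xi$ is a generator of $\la\sigma\ra$, so $\xi^{-1}g\xi=\sigma^k$ for some $k$ coprime to $|\la\sigma\ra|$. The point is that each such power is again of the required shape: since $\sigma^k=\la\Ff_0(m_\omega)\ra_{q^t}^k=\la\Ff_0(m_\omega)^k\ra_{q^t}$ and $\Ff_0(m_\omega)^k$ represents multiplication by $\omega^k$ on $\F_{q^{rt}}$ (using that $\Ff_0$ is $\F_{q^t}$-linear and that $\Ff_0(m_\alpha)\Ff_0(m_\beta)=\Ff_0(m_{\alpha\beta})$, which is Definition \ref{def} together with Theorem \ref{embedding} applied to the reduction $\F_{q^{rt}}\to\F_{q^t}^r$), we get $\sigma^k=\la\Ff_0(m_{\omega^k})\ra_{q^t}$. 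Because $\gcd(k,|\la\sigma\ra|)=1$ and $\omega$ generates $\F_{q^{rt}}^\ast$, the element $\omega':=\omega^k$ still generates $\F_{q^{rt}}^\ast$: indeed any power of $\omega$ whose exponent is coprime to $q^{rt}-1$ is a generator, and $k$ coprime to $\frac{q^{rt}-1}{q^t-1}$ suffices after adjusting $k$ modulo $|\la\sigma\ra|$ by a multiple of $|\la\sigma\ra|$ to also be coprime to $q^t-1$ — more cleanly, replace $\sigma^k$ by $\sigma^{k}$ with $k$ chosen in its residue class mod $|\la\sigma\ra|$ so that $\gcd(k,q^{rt}-1)=1$, which is possible by the Chinese Remainder Theorem since $\gcd(k,\frac{q^{rt}-1}{q^t-1})=1$. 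Setting $\sigma'=\sigma^k=\la\Ff_0(m_{\omega'})\ra_{q^t}$ then gives $g=\xi\sigma'\xi^{-1}$ as claimed.

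The only genuinely non-routine point is the last one, namely upgrading "$\sigma^k$ is a generator of the cyclic group $\la\sigma\ra$ of order $\frac{q^{rt}-1}{q^t-1}$" to "$\sigma^k=\la\Ff_0(m_{\omega^k})\ra_{q^t}$ with $\omega^k$ a generator of the \emph{whole} field $\F_{q^{rt}}^\ast$". I expect this arithmetic adjustment (choosing the exponent in its class modulo $|\la\sigma\ra|$ to be coprime to $q^{rt}-1$, via CRT) to be the main obstacle, though it is elementary; everything else is bookkeeping with the embedding $\Ff_0$ established in Theorem \ref{embedding} and Lemma \ref{lem8}. Alternatively, one can sidestep it entirely: the statement of the lemma only requires \emph{some} generator $\omega'$ realizing the given $g$ up to conjugacy, so it is enough to observe that among the $\varphi(q^{rt}-1)$ generators $\omega'$ of $\F_{q^{rt}}^\ast$ the elements $\la\Ff_0(m_{\omega'})\ra_{q^t}$ exhaust a full conjugacy-closed family of Singer cycles — because they already form all of $\la\sigma\ra$'s generators that arise from field generators — and then conjugacy of Singer subgroups finishes the proof.
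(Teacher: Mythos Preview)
Your approach is essentially the same as the paper's: invoke conjugacy of Singer subgroups in $\PGL(r,q^t)$ to write $g=\xi\sigma^m\xi^{-1}$, then identify $\sigma^m$ with $\la\Ff_0(m_{\omega^m})\ra_{q^t}$ and argue that the exponent can be taken so that $\omega^m$ generates $\F_{q^{rt}}^\ast$. In fact you are \emph{more} careful than the paper on the last point: the paper asserts that the order of $\sigma^m$ in $\PGL(r,q^t)$ forces $\omega^m$ to have the same order as $\omega$ in $\F_{q^{rt}}^\ast$, but the projective order only gives $\gcd(m,\frac{q^{rt}-1}{q^t-1})=1$, not $\gcd(m,q^{rt}-1)=1$; your CRT adjustment (replacing $m$ within its residue class modulo $\frac{q^{rt}-1}{q^t-1}$ by an exponent coprime to $q^{rt}-1$) is exactly what is needed to close this, and your justification that such a representative exists is correct. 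Your ``alternative'' at the end is not really a sidestep, since showing that the $\la\Ff_0(m_{\omega'})\ra_{q^t}$ exhaust the generators of $\la\sigma\ra$ amounts to the same CRT fact.
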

\begin{proof} We have seen that if $\omega\in \F_{q^{rt}}$ is a generator of $\F_{q^{rt}}^\ast$, then $\sigma=\la \Ff_0(m_\omega)\ra_{q^t}$ is a Singer cycle of $\Pqt$. Since all Singer subgroups are conjugate in $\PGL(n,q)$, we have that $g=\xi \sigma^m \xi^{-1}$ for some $m$. Now $\sigma^m=\la \Ff_0(m_{\omega^m})\ra_{q^t}$, and as the order of $\sigma^m$ in $\PGL(r,q^t)$ is the order of $g$ in $\PGL(r,q^t)$, $\omega^m$ has to have the same order as $\omega$ in $\F_{q^{rt}}^\ast$, and hence, $\omega^m$ is a generator $\omega'$. 
\end{proof}

We will need the following lemma which characterises cosets of a subfield in $\F_{q^n}$.
\begin{lemma} \label{orbituniek}Let $S$ be a set of elements of $\F_{q^n}$ satisfying the following conditions:
\begin{itemize}
 \item[(i)] $S$ is closed under taking $\F_q$-linear combinations;
\item[(ii)] for all $\beta\in \F_{q^n}$ either $\beta S=S$ or $\beta S\cap S=\{0\}$;
\item[(iii)] $|S|=q^{d}$, $d\mid n$.
\end{itemize}
Then $S=\alpha V$ with $V=\F_{q^{d}}$ for some $\alpha\in \F_{q^n}^\ast$.
\end{lemma}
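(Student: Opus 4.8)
The plan is to normalise so that the set contains $1$, and then recognise it as a subfield. Since $|S|=q^d$ and we may assume $d\ge 1$ (the case $d=0$ being degenerate), $S$ contains a nonzero element $s_0$. I would set $S':=s_0^{-1}S=\{s_0^{-1}s\mid s\in S\}$, so that $1\in S'$, and first check that $S'$ again satisfies (i)--(iii). Closure under $\F_q$-linear combinations is immediate because $S'$ is a fixed scalar multiple of $S$ and (i) holds for $S$. For (ii), note that $\beta S'=s_0^{-1}(\beta S)$, so $\beta S'=S'$ iff $\beta S=S$, and $\beta S'\cap S'=s_0^{-1}(\beta S\cap S)$, so $\beta S'\cap S'=\{0\}$ iff $\beta S\cap S=\{0\}$; hence the dichotomy in (ii) transfers verbatim from $S$ to $S'$. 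Finally $|S'|=|S|=q^d$. Thus it suffices to prove the lemma when $1\in S$, with the sharper conclusion $S'=\F_{q^d}$; then $S=s_0S'=s_0\F_{q^d}$ gives the statement with $\alpha=s_0$ and $V=\F_{q^d}$.

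The key step is the observation that $xS'=S'$ for every nonzero $x\in S'$. Indeed, since $1\in S'$ we have $x=x\cdot 1\in xS'$, and also $x\in S'$, so $x$ is a nonzero element of $xS'\cap S'$; condition (ii) applied to $S'$ then rules out the alternative $xS'\cap S'=\{0\}$ and forces $xS'=S'$.

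From this I would conclude directly that $S'$ is a subfield of $\F_{q^n}$. By (i) it is an additive subgroup containing $0$; it contains $1$ by construction; it is closed under multiplication, since for $x,y\in S'$ either $x=0$ (and then $xy=0\in S'$) or $x\ne 0$ (and then $xy\in xS'=S'$); and it is closed under inverses, since for $x\ne 0$ the equality $xS'=S'$ together with $1\in S'$ yields some $y\in S'$ with $xy=1$. Hence $S'$ is a finite subring of $\F_{q^n}$ in which every nonzero element is invertible, i.e. a subfield, of cardinality $q^d$. Since $d\mid n$, the unique subfield of $\F_{q^n}$ of that order is $\F_{q^d}$, so $S'=\F_{q^d}$, which completes the argument.

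I do not expect a genuine obstacle here; the only points requiring a little care are the transfer of hypotheses (i)--(iii) under the normalisation $S\mapsto s_0^{-1}S$, and the bookkeeping in the degenerate case $d=0$ (where $S=\{0\}$), both of which I would dispose of at the outset. An alternative route would be to show first that $T=\{\beta\in\F_{q^n}^\ast\mid\beta S=S\}\cup\{0\}$ is a subfield and that $S$ is a $T$-vector space, then argue $|T|=|S|$ using a variant of the key step; but the normalisation above makes the subfield structure visible immediately and avoids this detour.
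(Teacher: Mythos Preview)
Your proof is correct and follows essentially the same route as the paper: normalise by dividing through by a nonzero element so that $1$ lies in the set, then use condition (ii) together with $1\in S'$ to force $xS'=S'$ for every nonzero $x\in S'$, hence closure under multiplication and the subfield conclusion. Your write-up is in fact a bit more careful than the paper's, since you verify explicitly the transfer of (i)--(iii) to $S'$, the closure under inverses, and the identification of the unique subfield of order $q^d$ in $\F_{q^n}$.
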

\begin{proof} Let $S$ be a set of elements of $\F_{q^n}$ satisfying the three required properties. Put $S=\{s_1,\ldots,s_{q^{d}}\}$, where $s_1\neq 0$, and $V=\{1,v_2,v_3,\ldots,v_d\}$, where $v_i=s_i/s_1$. It is readily checked that the set $V$ satisfies properties $(i)-(ii)-(iii)$. Moreover, since $1\in V$, and $v_i\in V$, the set $v_iV\cap V$ contains $v_i$, and hence, $v_iV=V$, which implies that for all $i$ and $j$, $v_iv_j$ is contained in $V$. It follows that $V$ is closed under multiplication, and since $V$ is also closed under addition by property (i), $V$ is a subfield.
\end{proof}

It is clear from Lemma \ref{lem8} that if we consider the field reduction map $\Ff\Ff_0:\F_{q^{rt}}\to \F_{q}^{rt}$ and as before, $m_\alpha:\F_{q^{rt}}\to \F_{q^{rt}}: v\mapsto \alpha v$, then $G=\{ \la (\Ff \Ff_0)(m_\alpha)\ra_{q}\mid\alpha\in \F_{q^{rt}}^*\}$ is a Singer group (acting on the points of $\Pq$). In the following lemma, we will prove Theorem \ref{tdr} for the Singer group $G$, and later extend it to all Singer groups.

\begin{lemma} \label{orbit1} Let $d\mid n$. Consider vector field reduction maps $ \Ff_0:\F_{q^n}\to \F_{q^d}^{n/d}$ and $\Ff:\F_{q^d}^{n/d}\to \F_q^n$. Let $\sigma=\la (\Ff \Ff_0)( m_\omega) \ra_{q}$ be a Singer cycle of $\P_q^n$, where $\omega$ is a generator of $\F_{q^n}^\ast$. Consider the $(d-1)$-dimensional subspace $\pi=\{\la (\Ff \Ff_0)(\beta v)\ra_q\mid \beta\in \F_{q^d}^\ast\}$ in $\P_q^n$, where $v$ is fixed in $\F_{q^n}^\ast$. The orbit of $\pi$ under the Singer group $\la \sigma\ra$ is a $(d-1)$-spread  $\S$ of $\P_q^n$. Moreover, there is exactly one such orbit and the spread $\S$ is Desarguesian.
\end{lemma}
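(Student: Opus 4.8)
The plan is to identify the orbit of $\pi$ under $\la\sigma\ra$ with the Desarguesian $(d-1)$-spread $\D$ obtained from the vfr-map $\Ff\colon\F_{q^d}^{n/d}\to\F_q^n$ in Lemma~\ref{lem6} (applied with $r=n/d$, $t=d$), and then to deduce the uniqueness statement from Lemma~\ref{orbituniek}.

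First I would record the relevant bookkeeping. The composite $\theta:=\Ff\Ff_0\colon\F_{q^n}\to\F_q^n$ is again a vfr-map, and $\sigma=\la\theta(m_\omega)\ra_q$ acts on the points of $\P_q^n$ by $\la\theta(u)\ra_q\mapsto\la\theta(\omega u)\ra_q$; moreover, chasing the diagram of Definition~\ref{def} gives $(\Ff\Ff_0)(m_\omega)=\Ff(\Ff_0(m_\omega))$ with $\Ff_0(m_\omega)\in\GL(n/d,q^d)$, so by Lemma~\ref{omgekeerd2} the map $\sigma$, and hence $\la\sigma\ra$, stabilises $\D$. Since $\Ff_0$ is $\F_{q^d}$-linear, for $\beta\in\F_{q^d}$ one has $\theta(\beta u)=\Ff(\beta\Ff_0(u))$, so $\pi=\{\la\Ff(\beta\Ff_0(v))\ra_q\mid\beta\in\F_{q^d}^\ast\}=\Fb(\la\Ff_0(v)\ra_{q^d})$, which is a $(d-1)$-subspace by Lemma~\ref{Bpunt} and an element of $\D$; likewise $\sigma^k(\pi)=\{\la\Ff(\beta\Ff_0(\omega^k v))\ra_q\mid\beta\in\F_{q^d}^\ast\}=\Fb(\la\Ff_0(\omega^k v)\ra_{q^d})$. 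As $\omega$ generates $\F_{q^n}^\ast$ and $v\neq0$, the element $\omega^k v$ runs through all of $\F_{q^n}^\ast$, so $\Ff_0(\omega^k v)$ runs through all of $(\F_{q^d}^{n/d})^\ast$; hence the orbit of $\pi$ under $\la\sigma\ra$ is exactly $\D$, which is a $(d-1)$-spread by Lemma~\ref{lem6} and is Desarguesian (we proved earlier that a spread obtained from a vfr-map is Desarguesian). This settles the first and last assertions of the lemma.

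For uniqueness, suppose $\S'$ is an orbit of $\la\sigma\ra$ that is itself a $(d-1)$-spread, and fix $\pi'\in\S'$. Write $\pi'=\la W\ra_q$ with $W$ a $d$-dimensional $\F_q$-subspace of $\F_q^n$, and put $S:=\theta^{-1}(W)\subseteq\F_{q^n}$. Since $\theta$ is $\F_q$-linear, $S$ is a $d$-dimensional $\F_q$-subspace; in particular it is closed under $\F_q$-linear combinations and $|S|=q^d$ with $d\mid n$, giving conditions (i) and (iii) of Lemma~\ref{orbituniek}. For condition (ii): the vector subspace underlying $\sigma^j(\pi')$ corresponds under $\theta$ to $\omega^j S$, and because $\S'$ is a spread any two of the $\sigma^j(\pi')$ are either equal --- forcing $\omega^{j-j'}S=S$ --- or disjoint --- forcing $\omega^{j-j'}S\cap S=\{0\}$; as the powers of $\omega$ exhaust $\F_{q^n}^\ast$ this is precisely condition (ii). Lemma~\ref{orbituniek} then yields $S=\alpha\F_{q^d}$ for some $\alpha\in\F_{q^n}^\ast$, so $\pi'=\{\la\Ff(\beta\Ff_0(\alpha))\ra_q\mid\beta\in\F_{q^d}^\ast\}=\Fb(\la\Ff_0(\alpha)\ra_{q^d})\in\D$. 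Since $\D$ is $\la\sigma\ra$-invariant, $\S'\subseteq\D$, and as both are $(d-1)$-spreads of $\P_q^n$ (hence of the same cardinality $\frac{q^n-1}{q^d-1}$), we conclude $\S'=\D$.

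The step I expect to be the main obstacle is the bookkeeping with the two nested field reductions, and in particular using that $\Ff_0$ is only $\F_{q^d}$-linear and not $\F_{q^n}$-linear: this is exactly what lets $m_\omega$ permute the members of $\D$ nontrivially while still leaving $\D$ invariant, and it is what makes the orbit of $\pi$ fill out all of $\D$. Once the identifications $\pi=\Fb(\la\Ff_0(v)\ra_{q^d})$ and $(\Ff\Ff_0)(m_\omega)=\Ff(\Ff_0(m_\omega))$ are in place, the spread and Desarguesian assertions follow directly from Lemma~\ref{lem6}, and uniqueness reduces to checking the (routine) hypotheses of Lemma~\ref{orbituniek}.
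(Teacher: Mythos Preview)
Your proof is correct and follows essentially the same route as the paper's: both identify the orbit of $\pi$ with the Desarguesian spread $\D$ obtained from $\Ff$, and both derive uniqueness by verifying the hypotheses of Lemma~\ref{orbituniek}. Your organisation is slightly more streamlined in that you front-load the identification with $\D$ (via $\pi\in\D$, $\sigma$ stabilising $\D$ by Lemma~\ref{omgekeerd2}, and surjectivity onto $\D$), which yields the spread and Desarguesian properties simultaneously, whereas the paper first checks covering and disjointness directly and only identifies the orbit with $\D$ at the end.
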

\begin{proof} As $\la\sigma\ra$ acts transitively on the points of $\P_q^n$, it is clear that the orbit of $\pi$ under $\la \sigma\ra$ covers every point of $\P_q^n$. 

Now assume that there is a point of $\P_q^n$ contained in $\sigma^i(\pi)\cap \sigma^j(\pi)$, then there is a point contained in $\pi\cap \sigma^{j-i}(\pi)$, or equivalently $\la(\Ff \Ff_0)(\beta v)\ra_{q}=\sigma^{j-i}\la (\Ff \Ff_0)(\beta' v)\ra_q$ for some $\beta,\beta'\in \F_{q^d}^*$, i.e. $\beta v=\l \omega^{j-i}\beta' v$ for some $\l\in \F_q^*$. This implies that $\omega^{j-i}\in \F_{q^d}$ and that $\pi=\sigma^{j-i}(\pi)$. Moreover, this also implies that the orbit of $\pi$ is indeed a $(d-1)$-spread.
We now show that there is exactly one such orbit. Consider a $(d-1)$-space $\mu$ of $\P_q^n$ such that the orbit of $\mu$ under $\sigma$ is a $(d-1)$-spread. Then it is not hard to see that the points of $\mu$ correspond to a set of $q^d$ elements of $\F_{q^n}$ satisfying the properties of Lemma \ref{orbituniek}. This implies that the vectors of $\F_{q^n}$ corresponding to the points of $\mu$ are an $\F_{q^n}$-multiple of the vectors corresponding to $\pi$, which exactly means that $\mu$ is in the orbit of $\pi$ under $\sigma$, and hence, that there is exactly one orbit of $\sigma$ that is a $(d-1)$-spread.

Finally, note that by definition, the set of $(d-1)$-spaces $\{\{\la \Ff(\alpha v)\ra_q\mid\alpha \in \F_{q^d}^\ast\}, v\in \F_{q^d}^{n/d*}\}$ is a Desarguesian spread $\D$. Now $\{v\mid v\in \F_{q^d}^{n/d*}\}=\{\Ff_0(v')\mid v'\in \F_{q^n}^*\}$ and hence, $\D$ equals the set $\{\{\la \Ff(\alpha \Ff_0(v'))\ra_q\mid \alpha \in \F_{q^d}^*\}, v'\in \F_q^{n*}\}$. Since $\Ff_0$ is $\F_{q^d}$-linear, this equals $\{\la (\Ff \Ff_0)(\alpha v')\ra_q\mid \alpha \in \F_{q^d}^*\}, v'\in \F_{q^n}^*\}$. From this, we see that the elements $\sigma^j(\pi)$ are contained in $\D$, and hence, that $\S=\D$ is a Desarguesian spread.
\end{proof}
\begin{theorem}\label{drud} Let $d\mid n$, then a Singer cycle of $\P_q^n=\PG(n-1,q)$ has a unique orbit which is a $(d-1)$-spread $\S$. This spread is necessarily Desarguesian. The $G$-stabiliser, where $G=\la g\ra$ of any $S\in \S$, where $\S$ is the unique $(d-1)$-spread obtained as an orbit of a $(d-1)$-space under $G$, is $G_S=\langle g^{\frac{q^n-1}{q^d-1}}\rangle$. The factor group $G/G_S$ is a Singer group of the corresponding projective space $\P_{q^d}^{n/d}=\PG(n/d-1,q^d)$.
\end{theorem}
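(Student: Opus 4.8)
The plan is to reduce the whole statement to the explicit Singer cycle treated in Lemma~\ref{orbit1} and then transport everything by conjugation. First I fix vfr-maps $\Ff_0\colon\F_{q^n}\to\F_{q^d}^{n/d}$ and $\Ff\colon\F_{q^d}^{n/d}\to\F_q^n$; then $\Ff\Ff_0\colon\F_{q^n}\to\F_q^n$ is again a vfr-map, being an $\F_q$-linear bijection (recall $\Ff_0$ is $\F_{q^d}$-linear, hence $\F_q$-linear). Let $g$ be an arbitrary Singer cycle of $\P_q^n$ and put $G=\la g\ra$. Applying Lemma~\ref{toegevoegd} with $r=n$, $t=1$ and with $\Ff\Ff_0$ in the role of the map $\Ff_0$ there, we obtain $\xi\in\PGL(n,q)$ and a generator $\omega'$ of $\F_{q^n}^\ast$ with $g=\xi\sigma'\xi^{-1}$, where $\sigma'=\la(\Ff\Ff_0)(m_{\omega'})\ra_q$ is of exactly the shape handled in Lemma~\ref{orbit1}. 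Since $\xi$ permutes the $(d-1)$-subspaces of $\P_q^n$ and conjugation carries $\la\sigma'\ra$-orbits onto $\la g\ra$-orbits, it is enough to establish the four assertions for $\sigma'$ and then apply $\xi$. For $\sigma'$, Lemma~\ref{orbit1} already provides the unique orbit-spread $\S'$ together with the fact that it coincides with the Desarguesian spread $\D$ obtained from $\Ff$; transporting by $\xi$ and using that Desarguesianness of a spread is preserved under $\PGammaL(n,q)$-equivalence (immediate from the definition) yields the first two claims for $g$.

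For the stabiliser, let $\pi=\{\la(\Ff\Ff_0)(\beta v)\ra_q\mid\beta\in\F_{q^d}^\ast\}$ be the spread element appearing in Lemma~\ref{orbit1}. A short computation, essentially contained in the proof of Lemma~\ref{orbit1} (the reverse implication being a direct check), shows that ${\sigma'}^{k}(\pi)=\pi$ exactly when ${\omega'}^{k}\in\F_{q^d}^\ast$; since $\F_{q^d}^\ast$ is the unique subgroup of order $q^d-1$ of the cyclic group $\F_{q^n}^\ast=\la\omega'\ra$, this is equivalent to $\frac{q^n-1}{q^d-1}\mid k$. Independently, ${\sigma'}^{k}$ is trivial in $\PGL(n,q)$ precisely when $(\Ff\Ff_0)(m_{{\omega'}^{k}})$ is an $\F_q$-scalar map, i.e.\ when ${\omega'}^{k}\in\F_q^\ast$, i.e.\ $\theta_n\mid k$, so $\sigma'$ has order $\theta_n$. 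As $\frac{q^n-1}{q^d-1}=\theta_n/\theta_d$ divides $\theta_n$, we conclude $\Stab_{\la\sigma'\ra}(\pi)=\la{\sigma'}^{(q^n-1)/(q^d-1)}\ra$. Because $G$ is abelian and $\S$ is a single $G$-orbit, every element of $\S$ has the same $G$-stabiliser, and conjugating by $\xi$ gives $G_S=\la g^{(q^n-1)/(q^d-1)}\ra$ for each $S\in\S$.

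For the factor group, $|G/G_S|=\theta_n/\theta_d=\frac{q^n-1}{q^d-1}$, which is the number of points of $\P_{q^d}^{n/d}$. Since $G\le\PGL(n,q)$ stabilises $\S$, it acts on the incidence structure formed by the elements of $\S$ and their spanned lines, which by Desarguesianness is a copy of $\P_{q^d}^{n/d}$; abelianness together with transitivity on $\S$ forces $G_S$ to be the kernel of this action, so $G/G_S$ embeds into the collineation group of $\P_{q^d}^{n/d}$ and acts transitively, hence sharply transitively, on its points. To see it lies inside $\PGL(n/d,q^d)$, I specialise to $\sigma'$: under the identification of the elements of $\D$ with the points of $\P_{q^d}^{n/d}$ via the projective field reduction map $\Fb$ of $\Ff$, the element $\sigma'$ acts on the spread elements exactly as $\la\Ff_0(m_{\omega'})\ra_{q^d}$ acts on $\P_{q^d}^{n/d}$, and by Lemma~\ref{lem8} (with $r=n/d$, $t=d$) and its corollary this is a Singer cycle of $\P_{q^d}^{n/d}$, so $\la\sigma'\ra/\Stab_{\la\sigma'\ra}(\pi)$ is a Singer group. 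Transporting to $g$ through an identification of $\S$ with $\P_{q^d}^{n/d}$ chosen compatibly with $\xi$ (namely, compose the one for $\D$ with the collineation $\xi^{-1}|_{\S}$, which is linear since $\xi^{-1}\in\PGL(n,q)$) turns the action of $G/G_S$ into that of a Singer group; independence of this conclusion from the identification used follows from $\PGL(n/d,q^d)\trianglelefteq\PGammaL(n/d,q^d)$ together with the fundamental theorem of projective geometry.

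The first three items (the unique spread, its Desarguesian nature, and the stabiliser formula) reduce quickly to Lemma~\ref{orbit1} and a short coset count, and the degenerate case $d=n$ is vacuous ($\frac{q^n-1}{q^d-1}=1$, $\S=\{\P_q^n\}$). The step I expect to require the most care is the last one: pinning down the induced action of the quotient $G/G_S$ on the projective space attached to the Desarguesian spread $\S$ as a bona fide Singer group inside $\PGL(n/d,q^d)$. This forces one to track precisely how the identification $\S\leftrightarrow\P_{q^d}^{n/d}$ interacts with the conjugator $\xi$, and to invoke the normality of $\PGL$ in $\PGammaL$ so that the notion of ``Singer group'' is unambiguous regardless of that identification.
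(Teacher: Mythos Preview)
Your proof is correct and follows essentially the same approach as the paper: conjugate an arbitrary Singer cycle to the explicit one $\sigma=\la(\Ff\Ff_0)(m_\omega)\ra_q$ via Lemma~\ref{toegevoegd}, apply Lemma~\ref{orbit1} for the unique Desarguesian orbit-spread and the stabiliser, and transport everything back along the conjugator $\xi$. The only difference is in presentation of the factor group: the paper writes down $G/G_S\cong\la\la(\Ff\Ff_0)(m_\omega)\ra_{q^d}\ra$ in one line, whereas you pass through the induced action on the incidence structure of $\S$ before specialising to $\sigma'$ (and your extra remarks on independence of the identification are not strictly needed for the statement as phrased), but this is a matter of detail rather than of method.
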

\begin{proof} By Lemma \ref{toegevoegd}, a Singer cycle $g$ of $\P_q^n$ can be written as $\xi\sigma\xi^{-1}$, where $\sigma=\la(\Ff \Ff_0)(m_\omega)\ra_q$ for some generator $\omega$ of $\F_{q^n}^\ast$ and $\xi\in \PGL(n,q)$. By Lemma \ref{orbit1}, there is a $(d-1)$-space $\pi$ such that the orbit of $\pi$ under $\sigma$ is a Desarguesian $(d-1)$-spread. Since $\xi \sigma^m=g^m\xi$, this implies that the orbit of $\xi(\pi)$ under $g$ is a $(d-1)$-spread, which is Desarguesian because is the image of a Desarguesian spread under the collineation $\xi$. If $\mu$ is a $(d-1)$-space such that the orbit of $\mu$ under $g$ is a $(d-1)$-spread, then it follows that the orbit of $\xi^{-1}(\mu)$ under $\sigma$ is a $(d-1)$-spread, and hence, by the unicity of this orbit, proven in Lemma \ref{orbit1}, that $\xi^{-1}(\mu)=\sigma^j(\pi)$ for some $j$. This implies that $\mu=\xi\sigma^j(\pi)=g^j\xi(\pi)$, and hence, the orbit of $\mu$ under $g$ is the same as the orbit of $\xi(\pi)$ under $g$. 

As $|G_S|.|S^G|=|G|$, and $S^G$ is the spread $\S$, we have that $|G_S|=\frac{q^n-1}{q-1}/\frac{q^n-1}{q^d-1}=\frac{q^d-1}{q-1}$. Note that $\omega^{\frac{q^n-1}{q^d-1}}\in \F_{q^d}^*$. Since $g=\xi\sigma\xi^{-1}$, $|\langle g^{\frac{q^n-1}{q^d-1}}\rangle|=\frac{q^d-1}{q-1}$ and this group stabilises $\xi(\pi)$, where $\pi=\{\la (\Ff \Ff_0)(\beta v)\ra_q\mid \beta\in \F_{q^d}^\ast\}$, and clearly also all other elements of $S$, which are each contained in the orbit of $\xi(\pi)$ under $g$. This proves $G_S=\la g^{\frac{q^n-1}{q^d-1}}\ra$.

As $G\cong \la \la (\Ff \Ff_0)( m_\omega) \ra_{q}\ra$, $G_S\cong \la \la (\Ff \Ff_0)(m_\omega^{\frac{q^n-1}{q^d-1}})\ra_q\ra\cong \la \la (\Ff \Ff_0)( m_\alpha) \ra_{q}\ra $, where $\alpha\in \F_q^{d*}$, we have that $G/G_S\cong \la \la (\Ff \Ff_0)( m_\omega) \ra_{q^d}\ra$, and this last group is precisely a Singer group of $\P_{q^d}^{n/d}=\PG(n/d-1,q^d)$.
\end{proof}



%

\section{Subspreads of a Desarguesian spread}
A $(t'-1)$-subspread of a Desarguesian $(t-1)$-spread $\D$ in $\P_q^{rt}$ is a $(t'-1)$-spread which partitions the elements of $\D$. It is clear that a $(t'-1)$-subspread of $\D$ can only exist if $t'\mid t$. 
It is our goal to show that for every $t'\mid t$, a Desarguesian $(t-1)$-spread $\D$ has a unique Desarguesian $(t'-1)$-subspread. A proof of the uniqueness of Desarguesian $1$-subspreads of a Desarguesian $3$-spread in $\PG(7,q)$ was given in \cite[Theorem 2.4]{BCQ}. The general theorem was later proven in \cite{ABB}. Both proofs use the theory of indicator sets.


\begin{lemma} \label{deelruimtespread} Let $\Ff_1:\F_{q^{t}}^{r} \to \F_{q^{t'}}^{rt/{t'}}$ and $\Ff_2:\F_{q^{t'}}^{rt/{t'}}\to \F_q^{rt}$ be vfr-maps. Let $T$ be a $(t-1)$-space of $\Pq$ partitioned by elements of the form $\{\la \Ff_2\Ff_1(\beta v)\ra_q\mid \beta\in \F_{q^{t'}}^*\}$, where $v\in \F_{q^t}^{r*}$, then $T=\{\la \Ff_2(U)\ra_q\}$ where $U$ is a $t/t'$-dimensional vector subspace of $\F_{q^{t'}}^{rt/{t'}}$. \end{lemma}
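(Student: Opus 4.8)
The plan is to transport the configuration back to $\F_{q^{t'}}^{rt/t'}$ through the bijection $\Ff_2$ and to verify that the preimage of the vector space underlying $T$ is an $\F_{q^{t'}}$-subspace of dimension $t/t'$; that preimage will be the required $U$. Write $\hat T$ for the $t$-dimensional $\F_q$-vector subspace of $\F_q^{rt}$ with $T=\la \hat T\ra_q$, and put $W:=\Ff_2^{-1}(\hat T)$. Since $\Ff_2$ is an $\F_q$-linear bijection, $W$ is a $t$-dimensional $\F_q$-subspace of $\F_{q^{t'}}^{rt/t'}$, so $|W|=q^t$. Everything then reduces to showing that $W$ is closed under multiplication by $\F_{q^{t'}}$.

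To get there I would first rewrite the hypothesis on $T$ at the level of vectors. As $\Ff_1$ is a vfr-map it is $\F_{q^{t'}}$-linear, so $\{\Ff_1(\beta v)\mid\beta\in\F_{q^{t'}}\}=\la\Ff_1(v)\ra_{q^{t'}}$ for each $v\in\F_{q^t}^{r*}$, and applying the $\F_q$-linear injection $\Ff_2$, the set of nonzero vectors lying on the points of the spread element $\{\la\Ff_2\Ff_1(\beta v)\ra_q\mid\beta\in\F_{q^{t'}}^*\}$ is exactly $\Ff_2(\la\Ff_1(v)\ra_{q^{t'}})\setminus\{0\}$, a punctured $t'$-dimensional $\F_q$-space (here one uses $\{\lambda\beta\mid\lambda\in\F_q^*,\ \beta\in\F_{q^{t'}}^*\}=\F_{q^{t'}}^*$ to absorb the scalars hidden in passing from projective points to vectors). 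Hence the hypothesis that the points of $T$ are partitioned by such spread elements says that $\hat T\setminus\{0\}$ is the disjoint union of the corresponding sets $\Ff_2(\la\Ff_1(v)\ra_{q^{t'}})\setminus\{0\}$; applying $\Ff_2^{-1}$ shows $W$ is the union of the corresponding $\F_{q^{t'}}$-lines $\la\Ff_1(v)\ra_{q^{t'}}$ through the origin. In particular $\la w\ra_{q^{t'}}\subseteq W$ for every $w\in W$.

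Now $W$ is additively closed, being an $\F_q$-subspace, and closed under every scalar of $\F_{q^{t'}}$ by the previous step, so $W$ is an $\F_{q^{t'}}$-subspace of $\F_{q^{t'}}^{rt/t'}$; since $t'\mid t$ and $|W|=q^t$, it has $\F_{q^{t'}}$-dimension $t/t'$. Taking $U:=W$ yields $\Ff_2(U)=\hat T$ and hence $T=\la\Ff_2(U)\ra_q$ with $U$ a $(t/t')$-dimensional $\F_{q^{t'}}$-subspace, which is the assertion.

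I do not expect a real obstacle: the heart of the matter is the routine fact that an $\F_q$-subspace that happens to be a union of $\F_{q^{t'}}$-lines through $0$ is an $\F_{q^{t'}}$-subspace. The only step requiring care is the bookkeeping that turns the \emph{projective} partition hypothesis on $T$ into the \emph{vector-space} statement about $W$ --- that is, checking that the $\F_q^*$-rescalings introduced when one replaces projective points by representative vectors are swallowed by $\F_{q^{t'}}^*$, which holds simply because $\F_q^*\subseteq\F_{q^{t'}}^*$.
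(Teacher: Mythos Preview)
Your argument is correct and follows essentially the same route as the paper: both define the preimage $U=\Ff_2^{-1}(\hat T)$ (the paper writes it as the set of $\Ff_1(\beta v)$ whose image under $\Ff_2$ lands in $T$, which is the same set) and then verify it is an $\F_{q^{t'}}$-subspace by combining the $\F_q$-linearity coming from $T$ being a projective subspace with the $\F_{q^{t'}}$-scalar closure coming from the partition hypothesis. The only difference is presentational---you separate these two closure properties cleanly, while the paper checks the single combined condition $u_1+\beta'' u_2\in U$ in one step.
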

\begin{proof} Consider the set $U$ of all vectors $\Ff_1(\beta v)$ in $\F_{q^{t'}}^{rt/{t'}}$, with $\beta\in \F_{q^{t'}}^*$ and $v\in \F_{q^t}^{r*}$, such that $\la \Ff_2(\Ff_1(\beta v))\ra_q$ is a point of $T$. We will show that $U\cup \{0\}$ is a vector subspace of  $\F_{q^{t'}}^{rt/{t'}}$. Take $\Ff_1(\beta v)$  and $\Ff_1(\beta' v')$ in $U$, then $\Ff_1(\beta v)+\beta''\Ff_1(\beta' v')$, where $\beta''\in \F_{q^{t'}}^*$, equals $\Ff_1(\beta v+\beta''\beta'v')$ since $\Ff_1$ is an $\F_{q^{t'}}$-linear map. We have that $\la\Ff_2\Ff_1(\beta'v')\ra_q$ is a point of $T$, and hence, since $T$ is partitioned by elements of the form $\{\la \Ff_2\Ff_1(\beta v)\ra_q\mid \beta\in \F_{q^{t'}}\}$, and $\beta''\in \F_{q^{t'}}^*$, also $\la\Ff_2\Ff_1(\beta'\beta''v')\ra_q$ is a point of $T$. Since $T$ is a subspace of $\Pq$, this implies that $\la\Ff_2\Ff_1(\beta v)+\Ff_2\Ff_1(\beta'\beta''v')\ra_q=\la \Ff_2(\Ff_1(\beta v)+\Ff_1(\beta'\beta''v'))\ra_q$ is a point of $T$, and hence, that $\Ff_1(\beta v)+\beta''\Ff_1(\beta'v')$ is in $U$.
\end{proof}

The following theorem already appeared in \cite{ABB}, but, as said before, it was proven using the theory of indicator sets.
\begin{theorem} \label{uniek} Let $\D$ be a Desarguesian $(t-1)$-spread in $\P_q^{rt}=\PG(rt-1,q)$. For every $t'\mid t$, there is a unique Desarguesian $(t'-1)$-subspread in $\D$.
\end{theorem}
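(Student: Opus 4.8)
The plan is to establish existence via an explicit construction and then to obtain uniqueness by a degree/counting argument together with the classification of the stabiliser proven in Theorem~\ref{Dye}. For existence, I would fix a vfr-map $\Ff:\Fqtr\to\F_q^{rt}$ realising $\D$ as the spread of Lemma~\ref{lem6}, and fix vfr-maps $\Ff_1:\F_{q^t}^r\to\F_{q^{t'}}^{rt/t'}$ and $\Ff_2:\F_{q^{t'}}^{rt/t'}\to\F_q^{rt}$. By the functoriality of field reduction (the composition $\Ff_2\Ff_1$ is itself a vfr-map from $\F_{q^t}^r$ to $\F_q^{rt}$, since $\Ff_1$ is $\F_{q^{t'}}$-linear and $\Ff_2$ is $\F_q$-linear, so the composite is $\F_q$-linear and a bijection), Lemma~\ref{omgekeerd2} and Corollary~\ref{desequiv2} let me assume $\D$ is exactly the spread obtained from $\Ff_2\Ff_1$. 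Then the set $\mathcal{S}'=\{\{\la\Ff_2\Ff_1(\beta v)\ra_q\mid\beta\in\F_{q^{t'}}^*\}\mid v\in\F_{q^t}^{r*}\}$ is, by Lemma~\ref{lem6} applied to the vfr-map $\Ff_2$ composed with the $\F_{q^{t'}}$-identification, a Desarguesian $(t'-1)$-spread of $\Pq$; and since $\Ff_1$ is $\F_{q^{t'}}$-linear, each element of $\D$, being of the form $\{\la\Ff_2\Ff_1(\alpha v)\ra_q\mid\alpha\in\F_{q^t}^*\}$, is a union of elements of $\mathcal{S}'$ (take the $\F_{q^{t'}}$-subspace $\la v\ra_{q^t}$ of $\F_{q^t}^r$, map it under $\Ff_1$ to a $(t/t')$-dimensional $\F_{q^{t'}}$-space, and reduce). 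Hence $\mathcal{S}'$ is a Desarguesian $(t'-1)$-subspread of $\D$.

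For uniqueness, suppose $\mathcal{S}''$ is another Desarguesian $(t'-1)$-subspread of $\D$. By Corollary~\ref{desequiv2} applied inside the ambient space, there is an element $\la\phi\ra_q\in\PGL(rt,q)$ (or $\PGammaL(rt,q)$, but one can reduce to $\PGL$ exactly as in the lemma preceding Corollary~\ref{desequiv2}) with $\la\phi\ra_q(\mathcal{S}'')=\mathcal{S}'$. The key observation is that $\la\phi\ra_q$ must then also stabilise $\D$: indeed, each element of $\D$ is spanned by the $t/t'$ elements of $\mathcal{S}''$ it contains (using normality of the Desarguesian subspread, or directly the incidence-geometric characterisation of "Desarguesian subspread" via the Andr\'e/Bruck--Bose or spread-point-line structure), so $\la\phi\ra_q$ maps each element of $\D$ to a span of $t/t'$ elements of $\mathcal{S}'$; but a span of $t/t'$ elements of a Desarguesian subspread is again a $(t-1)$-space which is an element of $\D$ only if those $t/t'$ elements are precisely the ones refining a single $\D$-element. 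One has to check that the refinement pattern is preserved, which follows because the "spanned by these $t/t'$ subspread-elements" relation is intrinsic. Thus $\la\phi\ra_q$ lies in the stabiliser of $\D$, so by Theorem~\ref{Dye} (and Theorem~\ref{stabiliserspread}) $\phi=\Ff_2\Ff_1(\xi)$ for some $\xi\in\GammaL(r,q^t)$.

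It then remains to show that any such $\la\Ff_2\Ff_1(\xi)\ra_q$ with $\xi\in\GammaL(r,q^t)$ fixes $\mathcal{S}'$ setwise: writing $\xi=(\xi',\psi)_B$, one has $\Ff_2\Ff_1(\xi)(\Ff_2\Ff_1(\beta v))=\Ff_2\Ff_1(\xi(\beta v))=\Ff_2\Ff_1(\psi(\beta)\xi(v))$, and since $\psi$ fixes the subfield $\F_{q^{t'}}$ (Section~2.2, the remark that field automorphisms fix every subfield), this equals $\Ff_2(\psi(\beta)\Ff_1(\xi(v)))$ because $\psi(\beta)\in\F_{q^{t'}}$ and $\Ff_1$ is $\F_{q^{t'}}$-linear — wait, more carefully: $\Ff_1(\psi(\beta)\xi(v))=\psi(\beta)\Ff_1(\xi(v))$ as $\psi(\beta)\in\F_{q^{t'}}$, so the whole thing is $\Ff_2(\psi(\beta)\Ff_1(\xi(v)))=\la\text{element of }\mathcal{S}'\text{ through }\Ff_2\Ff_1(\xi(v))\ra$. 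Hence $\la\Ff_2\Ff_1(\xi)\ra_q(\mathcal{S}')=\mathcal{S}'$, so $\mathcal{S}''=\la\phi\ra_q^{-1}(\mathcal{S}')=\mathcal{S}'$, proving uniqueness. The main obstacle I anticipate is the middle step: carefully justifying that an isomorphism $\mathcal{S}''\to\mathcal{S}'$ of Desarguesian subspreads extends to (equivalently, is induced by) a collineation stabilising $\D$, i.e. that a Desarguesian subspread "remembers" its enveloping spread $\D$. This should follow from normality — the $(t-1)$-spaces of $\D$ are exactly the maximal subspaces that are partitioned by subspread elements and carry the induced Desarguesian structure — but making this precise, or alternatively invoking the uniqueness of the subspread's own ambient projective space under the Bruck--Bose / field-reduction correspondence, is where the real work lies; Lemma~\ref{deelruimtespread} is designed precisely to control which $(t-1)$-spaces arise this way and will be the main tool.
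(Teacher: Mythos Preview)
Your existence argument is fine and matches the paper's setup. The uniqueness argument, however, has a genuine gap at exactly the point you flag as the ``main obstacle'', and the obstacle is not merely technical: the claim that a collineation $\langle\phi\rangle_q$ carrying $\mathcal{S}''$ to $\mathcal{S}'$ must stabilise $\D$ is \emph{false}. A Desarguesian $(t'-1)$-subspread does not remember its enveloping $(t-1)$-spread. Concretely, via Lemma~\ref{deelruimtespread} the Desarguesian $(t-1)$-spreads of $\Pq$ refined by $\mathcal{S}'$ correspond bijectively to the Desarguesian $(t/t'-1)$-spreads of $\P_{q^{t'}}^{rt/t'}$, and there are many of the latter; for any $\psi\in\GL(rt/t',q^{t'})$ not stabilising the spread $\mathcal{E}$ upstairs, the element $\langle\Ff_2(\psi)\rangle_q$ fixes $\mathcal{S}'$ but moves $\D$. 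Taking $\mathcal{S}''=\mathcal{S}'$ and $\phi=\Ff_2(\psi)$ already gives a counterexample to your key step. Your proposed intrinsic characterisation of $\D$-elements as ``maximal subspaces partitioned by subspread elements'' fails for the same reason: every $(t-1)$-space of the form $\langle\Ff_2(U)\rangle_q$ with $U$ a $(t/t')$-dimensional $\F_{q^{t'}}$-subspace is so partitioned, and most of these are not in $\D$.

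The paper's proof repairs this by working with the \emph{other} stabiliser. Rather than trying to force $\xi$ into the stabiliser of $\D$ (the image of $\GammaL(r,q^t)$), it shows that $\xi^{-1}$ lies in the stabiliser of $\mathcal{S}'$ (the image of $\GammaL(rt/t',q^{t'})$), a strictly larger group. The mechanism: Lemma~\ref{deelruimtespread} shows $\xi(\D)$ comes from a Desarguesian spread $\mathcal{E}'$ of $\P_{q^{t'}}^{rt/t'}$; $\PGL$-equivalence \emph{in that intermediate space} produces $\psi\in\GL(rt/t',q^{t'})$ with $\langle\psi\rangle_{q^{t'}}(\mathcal{E})=\mathcal{E}'$; then $\xi':=\langle\Ff_2(\psi)\rangle_q$ satisfies $\xi'(\D)=\xi(\D)$, so $\xi^{-1}\xi'$ stabilises $\D$ and Theorem~\ref{stabiliserspread} gives $\xi^{-1}\xi'=\langle\Ff(\chi)\rangle_q$ for some $\chi\in\GammaL(r,q^t)$. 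Hence $\xi^{-1}=\langle\Ff_2(\Ff_1(\chi)\psi^{-1})\rangle_q$ lies in the image of $\GammaL(rt/t',q^{t'})$ and therefore stabilises $\mathcal{S}'$. Your closing computation---that elements coming from $\GammaL(r,q^t)$ preserve $\mathcal{S}'$---is correct but is only the easy inclusion of stabilisers; the substance of the argument is placing $\xi^{-1}$ in the larger one.
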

\begin{proof} Let $\Ff_1:\F_{q^{t}}^{r} \to \F_{q^{t'}}^{rt/{t'}}$, let $\Ff_2:\F_{q^{t'}}^{rt/{t'}}\to \F_q^{rt}$, and let $\Ff=\Ff_2\Ff_1$ be vector field reduction maps. Since all Desarguesian spreads are $\PGL$-equivalent (Corollary \ref{desequiv2}), we may choose $\D$ to be the Desarguesian spread with elements $\{\{\la \Ff(\alpha v)\ra_q \mid \alpha \in \F_{q^t}^\ast\},v\in \F_{q^t}^{r\ast}\}$. Suppose that there is a Desarguesian $(t'-1)$-subspread $\D_1$ of $\D$ and let $\D_2$ be the Desarguesian $(t'-1)$-spread with elements $\{\la \Ff(\beta v)\ra_q \mid \beta \in \F_{q^{t'}}^\ast\}$, for all $v\in \F_{q^t}^{r*}$. It is clear that $\D_2$ is $(t'-1)$-subspread of $\D$. Again using that all Desarguesian spreads are $\PGL$-equivalent, we find that there is an element $ \xi$ of $\PGL(rt,q)$ such that $\xi(\D_1)=\D_2$. 

By definition,  every element of $\D$ is of the form $\{\la \Ff_2(\Ff_1(\alpha v)\ra_q\mid \alpha\in \F_{q^{t}}^*\}$. Moreover, $\{\{\la \Ff_1(\alpha v)\ra_{q^{t'}}\mid \alpha\in \F_{q^{t}}^*\},v\in \F_{q^t}^{r*}\}$ forms a Desarguesian spread $\mathcal{E}$ in $\P_{q^{t'}}^{rt/t'}$. Let $\la U\ra_{q^{t'}}\in \mathcal{E}$ and $\beta \in \F_{q^{t'}}$, then $U=\{\Ff_1(\alpha v)\mid \alpha \in \F_{q^t}\}$ and $\beta U=\{\beta\Ff_1(\alpha v)\mid \alpha\in \F_{q^t}\}=\{\Ff_1(\alpha' v)\mid \alpha'\in \F_{q^t}\}=U$ since $\Ff_1$ is $\F_{q^{t'}}$-linear.
Since every element of $\D$ is partitioned by elements of $\D_1$, every element of $\xi(\D)$ is partioned by elements of $\xi(\D_1)=\D_2$. By Lemma \ref{deelruimtespread}, we find that every element $F_i$, $i=1,\ldots,\frac{q^{rt}-1}{q^t-1}$ of $\xi(\D)$ is of the form $\{\la \Ff_2(U'_i)\ra_q\}$ for some vector subspace $U'_i$ of $\F_{q^t}^r$.  It is clear that the vector subspaces $U'_i$, $i=1,\ldots, \frac{q^{rt}-1}{q^t-1}$ form a vector space partition of $\F_{q^{t'}}^{rt/t'}$, and hence, determine a $(t/t'-1)$-spread $\mathcal{E'}$ of $\P_{q^{t'}}^{rt/t'}$. We have that if  $\la U'\ra_{q^{t'}}\in \mathcal{E'}$, then $\la \Ff_2(U')\ra_q\in \xi(\D)$. Moreover, the incidence structure with points the elements $\la U'_i\ra_{q^{t'}}$ and lines the spaces spanned by two of such elements is isomorphic to the incidence structure with points the elements of the Desarguesian spread $\xi(\D)$ and lines the spaces spanned by two of such elements, which implies that the spread $\E'$ is Desarguesian. Again using that all Desarguesian spreads of $\P_{q^{t'}}^{rt/t'}$ are $\PGL$-equivalent, yields that there is an element $\la \psi\ra_{q^{t'}}$, where $\psi \in \GL(rt/t',t')$ such that $\la \psi \ra_{q^{t'}}(\mathcal{E})=\mathcal{E'}$.

Let $D\in \D$, then $D=\la \Ff_2(U)\ra_q$ for some $\la U\ra_{q^{t'}}\in \mathcal{E}$. Consider the mapping $\xi':\la \Ff_2(\psi)\ra_q$, then $\xi'(D)=\la \Ff_2(\psi)\ra_q(\la \Ff_2(U)\ra_q)=\la \Ff_2(\psi(U))\ra_q$. Now, since $\la U\ra_{q^{t'}}\in \mathcal{E}$, and $\la \psi\ra_{q^{t'}}(\mathcal{E})=\mathcal{E'}$, $\la \psi\ra_{q^{t'}}(\la U\ra_{q^{t'}})=\la \psi(U)\ra_{q^{t'}}$ is contained in $\mathcal{E'}$. But this implies that $\la \psi(U)\ra_{q^{t'}}=\la U'\ra_{q^{t'}}$ for some $U'$ with $\la \Ff_2(U')\ra_q\in \xi(\D)$ for all $D\in \D$. Hence, $\xi'(D)$ is contained in $\xi(\D)$ for all $D\in \D$. We conclude that $\xi'(\D)=\xi(\D)$. This implies that $\xi^{-1} \xi'$ is an element of the stabiliser of $\D$ in $\PGammaL(rt,q)$, and hence by Theorem \ref{stabiliserspread}, is of the form $\la\Ff(\chi)\ra_q$, where $\chi\in \GammaL(r,q^t)$. This implies that $\xi^{-1}=\la\Ff(\chi)\ra_q \la \Ff_2(\psi)\ra_q^{-1}=\la \Ff_2(\Ff_1(\chi) \psi^{-1})\ra_q$ and hence, is an element of $\PGammaL(rt/t',q^{t'})$. This implies that $\xi^{-1}$ maps an element $(\la \beta v\ra_{q}\mid \beta \in \F_{q^{t'}})$ of $\D_2$ onto an element contained in $\D_2$, and hence, $\D_1=\xi^{-1}(\D_2)=\D_2$.
\end{proof}

\section{Linear sets and Condition (A) of Csajb\'ok-Zanella} \label{cor}
{\em Linear sets} have been intensively used in recent years in order to classify, construct or characterise various structures related to finite incidence geometry. Besides their relation to blocking sets, introduced in \cite{guglielmo}, linear sets also appear in the study of translation ovoids (see e.g. \cite{LP}), and are of high relevance in the theory of semifields (see e.g. \cite{LaPo}).

Roughly speaking, linear sets are point sets in a projective space that are obtained from the inverse image of a vector space under some field reduction map, but we will now give an exact definition. Recall that an $\F_{q^t}$-subspace of the projective space $\Pqt$ is a set $\{\la v\ra_{q^t}\mid v \in U^*\}$, where $U$ is a vector subspace of $\Fqtr$. Now if $\Ff:\Fqtr\rightarrow\F_q^{rt}$ is a vfr-map, then, we can consider the set
$$\B(U)=\{\la v\ra_{q^t}\mid v \in \Ff^{-1}(U^*)\}, \textrm{where}\ U\ \textrm{is a vector subspace of}\ \F_q^{rt}.$$
The set $\B(U)$ consists of points of $\Pqt$ and forms an $\F_{q^t}$-subspace of $\Pqt$ if and only if $\Ff^{-1}(U)$ is a vector subspace of $\Fqtr$ (over $\F_{q^t}$), which is certainly not always the case. But for $v,w\in \Ff^{-1}(U)$, it does hold that $v+\l w$, with $\l\in \F_q$ is an element of $\Ff^{-1}(U)$. For this reason, $\B(U)$ is called an {\em $\F_q$-linear set.}

\begin{definition}
Let $\Ff:\Fqtr\rightarrow\F_q^{rt}$ be a vfr-map and $\pi$ be a subspace of $\Pq$, say $\pi=\la U\ra_q$ with $U$ a vector subspace of $\F_q^{rt}$. Then we define
$$\B(\pi)=\B(U) (=\{\la v\ra_{q^t}\mid v \in \Ff^{-1}(U^*)\}).$$
\end{definition}
We see that, since  $\l U=U$ if $\l \in \F_q$, $\B(\pi)$ is well defined. 
%

For the remainder of this section, we will fix the vector field reduction map $\Ff:\Fqtr\rightarrow \F_q^{tr}$, and the Desarguesian spread $\D$ obtained from $\Ff$. In this way, the notation $\B(U)$ is well-defined.

In \cite{corrado}, Csajb\'ok and Zanella consider the following Condition (A) for $(L,n)$, where $L$ is a linear set:
\begin{itemize}
\item[(A)]For any two $(n-1)$-subspaces $\pi,\pi'\subset \PG(rt-1,q)$, such that $\B(\pi)=L=\B(\pi')$, a collineation $\gamma\in \PGammaL(rt,q)$ exists such that 
\begin{itemize}
\item[(a)]$\gamma(\pi)=\pi'$
\item[(b)] $\gamma$ preserves the Desarguesian spead $\D$
\item[(c)] the induced map on $\PG(r,q^t)$ is a collineation.
\end{itemize}
\end{itemize}
In \cite{corrado}, the authors show that Condition (A) has to hold for $(L,n)$ in order for the equivalence statement for linear sets of \cite{michel1} to be true for $L$. They provide an example of a linear set not satisfying Condition (A):
\begin{theorem}\cite{corrado}
The scattered linear set $L$ of pseudoregulus type, $L=\{(\lambda,\lambda^q)\mid \lambda \in \F_{q^5}\}$ does not satisfy Condition (A).
\end{theorem}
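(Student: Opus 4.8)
\emph{Proof plan.} Here $r=2$, $t=5$ and $rt=10$, and $L$ is scattered of rank $n=5$, so Condition~(A) concerns $4$-dimensional subspaces $\pi$ of $\PG(9,q)$ with $\B(\pi)=L$. The plan is to exhibit two such subspaces $\pi_1,\pi_2$ that lie in different orbits of the stabiliser of $\D$ in $\PGammaL(10,q)$. This is enough: by Theorem~\ref{stabiliserspread} every $\gamma\in\PGammaL(10,q)$ that preserves $\D$ has the form $\la\Ff(\xi)\ra_q$ with $\xi\in\GammaL(2,q^5)$, and such a $\gamma$ maps a spread element $\Fb(\la v\ra_{q^5})$ to $\Fb(\la\xi v\ra_{q^5})$, hence induces on $\PG(1,q^5)$ (the space whose points are the elements of $\D$) the collineation attached to $\xi$; so condition~(c) is automatic once (b) holds, and Condition~(A) fails as soon as no $\gamma$ in the stabiliser of $\D$ maps $\pi_1$ onto $\pi_2$.

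For $i=1,2$ put $U_i=\{(x,x^{q^i})\mid x\in\F_{q^5}\}$, a $5$-dimensional $\F_q$-subspace of $\F_{q^5}^2$, and $\pi_i=\la\Ff(U_i)\ra_q$. Since $\Ff^{-1}(\Ff(U_i))=U_i$, we have $\B(\pi_i)=\{\la v\ra_{q^5}\mid v\in U_i\setminus\{0\}\}$, which equals $L$ by definition when $i=1$. For $i=2$ the identity $\la(x,x^{q^2})\ra_{q^5}=\la x^q\cdot(x,x^{q^2})\ra_{q^5}=\la(x^{q+1},(x^{q+1})^q)\ra_{q^5}$, valid for every $x\in\F_{q^5}^\ast$, shows $\B(\pi_2)\subseteq L$; the reverse inclusion follows by an analogous computation (or by a point count, as both are scattered of rank $5$). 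Finally $\pi_1\neq\pi_2$, since $U_1\cap U_2=\{(x,x)\mid x\in\F_q\}$ has $\F_q$-dimension $1$.

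The core step is to show that no $\xi\in\GammaL(2,q^5)$ satisfies $\xi(U_1)=U_2$ --- equivalently, using Definition~\ref{def} and the fact that $\lambda U_2=U_2$ for $\lambda\in\F_q^\ast$ (because $U_2$ is $\F_q$-linear, not $\F_{q^5}$-linear), that no $\la\Ff(\xi)\ra_q$ maps $\pi_1$ onto $\pi_2$. Write such a $\xi$, with companion automorphism $x\mapsto x^{q^s}$, as $\xi(a,b)=(\alpha a^{q^s}+\beta b^{q^s},\,\gamma a^{q^s}+\delta b^{q^s})$ with $\alpha\delta-\beta\gamma\neq0$. Substituting $u=x^{q^s}$ gives $\xi(U_1)=\{(\alpha u+\beta u^q,\,\gamma u+\delta u^q)\mid u\in\F_{q^5}\}$, so $\xi(U_1)=U_2$ would force $\gamma u+\delta u^q=(\alpha u+\beta u^q)^{q^2}=\alpha^{q^2}u^{q^2}+\beta^{q^2}u^{q^3}$ for all $u\in\F_{q^5}$. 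This is a $q$-linearised polynomial of $q$-degree at most $3<5$ vanishing on all of $\F_{q^5}$, hence it is the zero polynomial, and $\alpha=\beta=\gamma=\delta=0$ contradicts $\alpha\delta-\beta\gamma\neq0$. Thus $\pi_1$ and $\pi_2$ are in distinct orbits of the stabiliser of $\D$, no $\gamma$ satisfies (a) and (b) simultaneously, and Condition~(A) fails for $L$.

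I expect the main thing to watch is the translation between the two languages: one must allow an arbitrary Frobenius twist $x\mapsto x^{q^s}$ inside $\xi$ in the core step (so the argument uses linearised polynomials, not merely matrices), and one must invoke Theorem~\ref{stabiliserspread} to know that preserving $\D$ pins $\gamma$ down to the shape $\la\Ff(\xi)\ra_q$ --- which is also exactly why (b) entails (c). Granting that, the two computations involved --- that $\B(\pi_1)=\B(\pi_2)=L$, and that $U_1$ and $U_2$ are inequivalent --- are short and only use linearised polynomials and $(q-1)$-th powers in $\F_{q^5}^\ast$. One could instead deduce $\B(\pi_1)=\B(\pi_2)$ from the known fact that the maps $x\mapsto x^{q^i}$ with $\gcd(i,5)=1$ all produce the same point set in $\PG(1,q^5)$, but the explicit substitution keeps things self-contained.
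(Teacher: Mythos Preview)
The paper does not give its own proof of this theorem: it is quoted verbatim from \cite{corrado} as motivation for the subsequent discussion of Condition~(A). So there is nothing in the paper to compare your argument against line by line.

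That said, your proof is correct and fits neatly into the paper's framework. What you are really doing is verifying the negation of the reformulation~(A$'$) in Theorem~\ref{AenA'}: you produce two $5$-dimensional $\F_q$-subspaces $U_1,U_2\subset\F_{q^5}^2$ with $\B(U_1)=\B(U_2)=L$ and show that no $\xi\in\GammaL(2,q^5)$ carries $U_1$ onto $U_2$. Your invocation of Theorem~\ref{stabiliserspread} to reduce ``$\gamma$ preserves $\D$'' to ``$\gamma=\la\Ff(\xi)\ra_q$'' is exactly right, and is the same reduction the paper makes in Lemma~\ref{c} and Theorem~\ref{AenA'}. The linearised-polynomial argument that forces $\alpha=\beta=\gamma=\delta=0$ is clean and decisive.

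One small point of care: you write the companion automorphism of $\xi$ as $x\mapsto x^{q^s}$, but if $q=p^e$ with $e>1$ then $\Aut(\F_{q^5})$ has order $5e$ and not every automorphism is a power of $x\mapsto x^q$. This does not damage your argument, because after the substitution $u=\sigma(x)$ (for an arbitrary $\sigma\in\Aut(\F_{q^5})$) one still gets $\xi(U_1)=\{(\alpha u+\beta u^q,\gamma u+\delta u^q)\mid u\in\F_{q^5}\}$, using only that $\sigma$ is bijective and commutes with $x\mapsto x^q$. It would be cleanest to phrase the core step with a generic $\sigma$ rather than $x\mapsto x^{q^s}$.
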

The goal of this section is to have a better understanding of Condition (A) by providing equivalent statements and by giving more information about examples of linear sets that are known to (not) satisfy Condition (A).

We start with an easy observation.

\begin{lemma} \label{c}Property $(c)$ in Condition (A) follows from $(b)$ (and is thus superfluous).
\end{lemma}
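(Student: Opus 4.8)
The plan is to show that any collineation $\gamma\in\PGammaL(rt,q)$ that preserves the Desarguesian spread $\D$ automatically induces a well-defined collineation on $\PG(r-1,q^t)$, so that condition $(c)$ is implied by $(b)$. The point is to exploit Theorem \ref{stabiliserspread}: since $\gamma=\la\phi\ra_q$ stabilises $\D$, we have $\phi=\Ff(\xi)$ for some $\xi\in\GammaL(r,q^t)$. The element $\xi$ then acts on $\Fqtr$ in the usual semilinear way, hence it defines an element $\la\xi\ra_{q^t}$ of $\PGammaL(r,q^t)$, which is precisely a collineation of $\PG(r-1,q^t)$.

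The key steps, in order, are as follows. First I would recall that the points of $\PG(r-1,q^t)$ are in bijection with the spread elements of $\D$ via $\la v\ra_{q^t}\leftrightarrow\Fb(\la v\ra_{q^t})=\{\la\Ff(\alpha v)\ra_q\mid\alpha\in\F_{q^t}^\ast\}$ (Lemma \ref{Bpunt} and Lemma \ref{lem6}), and that the lines of $\PG(r-1,q^t)$ correspond to the subspaces of $\Pq$ spanned by two spread elements (this is exactly the content of the Desarguesian property, as spelled out before Lemma \ref{lem6}'s applications). Second, since $\gamma$ preserves $\D$, it permutes the spread elements, hence induces a permutation $\bar\gamma$ of the point set of $\PG(r-1,q^t)$; because $\gamma$ is a collineation of $\Pq$ it maps a subspace spanned by two spread elements to another such subspace, so $\bar\gamma$ maps lines of $\PG(r-1,q^t)$ to lines. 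By the fundamental theorem of projective geometry (for $r\ge 3$), any line-preserving bijection of the point set is a collineation; for $r=2$ one instead uses the identification of $\bar\gamma$ with $\la\xi\ra_{q^t}$ directly. Third, and most cleanly, I would simply invoke Theorem \ref{stabiliserspread} to write $\phi=\Ff(\xi)$ with $\xi\in\GammaL(r,q^t)$, and observe that for every point $\la v\ra_{q^t}$ of $\PG(r-1,q^t)$,
$$\gamma\bigl(\Fb(\la v\ra_{q^t})\bigr)=\la\phi\ra_q\bigl(\{\la\Ff(\alpha v)\ra_q\mid\alpha\in\F_{q^t}^\ast\}\bigr)=\{\la\Ff(\xi(\alpha v))\ra_q\mid\alpha\in\F_{q^t}^\ast\}=\Fb\bigl(\la\xi(v)\ra_{q^t}\bigr),$$
so the induced map on $\PG(r-1,q^t)$ is exactly $\la\xi\ra_{q^t}\in\PGammaL(r,q^t)$, which is a collineation by definition. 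Since the induced map is uniquely determined by $\gamma$ (the spread-element-to-point correspondence is a bijection), there is nothing left to check.

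The only subtlety — and the part one must be slightly careful about rather than genuinely hard — is the well-definedness of the induced map: a priori $\gamma$ is an element of $\PGammaL(rt,q)$, a coset $\la\phi\ra_q$, and one should check that different representatives $\phi,\l\phi$ (with $\l\in\F_q^\ast$) and different lifts $\xi$ (differing by an $\F_q$-scalar) give the same map on $\PG(r-1,q^t)$; but an $\F_q$-scalar on $\Fqtr$ is in particular an $\F_{q^t}$-scalar, so it is absorbed in passing to $\PGammaL(r,q^t)$. I expect no real obstacle here: the statement is essentially a bookkeeping consequence of Theorem \ref{stabiliserspread} together with the bijection between spread elements of $\D$ and points of $\PG(r-1,q^t)$, and the displayed computation above is the whole proof.
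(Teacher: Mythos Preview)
Your proposal is correct and the core argument---invoking Theorem \ref{stabiliserspread} to write $\phi=\Ff(\xi)$ with $\xi\in\GammaL(r,q^t)$ and concluding that the induced map on $\Pqt$ is $\la\xi\ra_{q^t}\in\PGammaL(r,q^t)$---is exactly the paper's proof. Your surrounding discussion (the geometric argument via the fundamental theorem, the well-definedness check for different representatives) is extra scaffolding; the paper dispenses with it in two sentences, since once $\phi=\Ff(\xi)$ is known the induced map is $\la\xi\ra_{q^t}$ by construction and there is nothing further to verify.
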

\begin{proof} From Theorem \ref{stabiliserspread}, we obtain that an element $\gamma$ which satisfies (b) is of the form $\la \Ff(\xi)\ra_q$ where $\xi\in \GammaL(r,q^t)$. This implies that the induced map of $\gamma$ on $\Pqt$ is $\la \xi\ra_{q^t}$, which is an element of $\PGammaL(r,q^t)$ and hence, a collineation.
\end{proof}

We can now rewrite condition (A) as follows.
\begin{theorem} \label{AenA'} Condition (A) is equivalent to the following.
\begin{itemize}
\item[(A')] For any two $U,U'$ vector subspaces of dimension $n$ of $\F_q^{rt}$ with $\B(U)=\B(U')$, there is an element $\xi\in \GammaL(r,q^t)$ such that $\Ff(\xi)(U)= U'$.
\end{itemize}
\end{theorem}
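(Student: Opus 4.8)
The plan is to prove the equivalence by establishing both implications, each of which is simply a translation between the projective picture of Condition (A) --- $(n-1)$-dimensional subspaces $\pi$ of $\Pq$ and collineations $\gamma\in\PGammaL(rt,q)$ --- and the vector picture of (A$'$) --- $n$-dimensional subspaces $U$ of $\F_q^{rt}$ and elements $\Ff(\xi)$ of the embedded group $\Ff(\GammaL(r,q^t))$. The three facts I will invoke are Lemma \ref{c} (condition (c) is implied by (b)), Lemma \ref{omgekeerd2} (for every $\xi\in\GammaL(r,q^t)$ the collineation $\la\Ff(\xi)\ra_q$ stabilises $\D$), and its converse Theorem \ref{stabiliserspread} (any $\la\phi\ra_q\in\PGammaL(rt,q)$ stabilising $\D$ has $\phi=\Ff(\xi)$ for some $\xi\in\GammaL(r,q^t)$). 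Throughout I will use the elementary dictionary: an $(n-1)$-dimensional projective subspace of $\Pq$ is $\la U\ra_q$ for a unique $n$-dimensional vector subspace $U$ of $\F_q^{rt}$, so that $\la W_1\ra_q=\la W_2\ra_q$ if and only if $W_1=W_2$, and $\gamma=\la\phi\ra_q$ sends $\la W\ra_q$ to $\la\phi(W)\ra_q$; finally, by definition $\B(\la U\ra_q)=\B(U)$.

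First I would prove that (A) implies (A$'$). Given $n$-dimensional subspaces $U,U'$ of $\F_q^{rt}$ with $\B(U)=\B(U')$, set $\pi=\la U\ra_q$ and $\pi'=\la U'\ra_q$. These are $(n-1)$-subspaces of $\Pq$ with $\B(\pi)=\B(U)=\B(U')=\B(\pi')$, call it $L$. Condition (A) yields $\gamma=\la\phi\ra_q\in\PGammaL(rt,q)$ with $\gamma(\pi)=\pi'$ and $\gamma$ preserving $\D$. By Theorem \ref{stabiliserspread}, $\phi=\Ff(\xi)$ for some $\xi\in\GammaL(r,q^t)$, and the equality $\gamma(\pi)=\pi'$ reads $\la\Ff(\xi)(U)\ra_q=\la U'\ra_q$; since a vector subspace of $\F_q^{rt}$ is determined by the projective subspace it spans, this forces $\Ff(\xi)(U)=U'$, which is exactly (A$'$).

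Conversely, to see that (A$'$) implies (A), I take $(n-1)$-subspaces $\pi,\pi'$ of $\Pq$ with $\B(\pi)=L=\B(\pi')$, write $\pi=\la U\ra_q$ and $\pi'=\la U'\ra_q$ with $U,U'$ of dimension $n$, and observe $\B(U)=\B(\pi)=\B(\pi')=\B(U')$. Condition (A$'$) produces $\xi\in\GammaL(r,q^t)$ with $\Ff(\xi)(U)=U'$. Put $\gamma=\la\Ff(\xi)\ra_q\in\PGammaL(rt,q)$: then $\gamma(\pi)=\la\Ff(\xi)(U)\ra_q=\la U'\ra_q=\pi'$, so (a) holds; $\gamma$ stabilises $\D$ by Lemma \ref{omgekeerd2}, so (b) holds; and (c) follows from (b) by Lemma \ref{c}. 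Hence Condition (A) holds.

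I do not expect a deep obstacle here: all the structural content has already been supplied by Theorem \ref{stabiliserspread}, and what remains is bookkeeping. The only step that genuinely deserves care is the two-way passage between the projective identity $\gamma(\pi)=\pi'$ and the vector identity $\Ff(\xi)(U)=U'$ --- that is, checking that the $\F_q$-scalar indeterminacy inherent in the definition of $\B$ and in the action of $\PGammaL(rt,q)$ does no harm, which comes down precisely to the fact that a vector subspace of $\F_q^{rt}$ is recovered uniquely from its projectivisation.
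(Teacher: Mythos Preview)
Your proof is correct and follows essentially the same route as the paper's: both directions are handled by translating between $(n-1)$-subspaces $\pi=\la U\ra_q$ and $n$-dimensional vector subspaces $U$, invoking Theorem~\ref{stabiliserspread} (and Lemma~\ref{omgekeerd2}) to pass between collineations stabilising $\D$ and elements $\Ff(\xi)$, and using Lemma~\ref{c} to dispose of (c). The only cosmetic difference is that where you directly use that $\la W_1\ra_q=\la W_2\ra_q$ forces $W_1=W_2$ for vector subspaces, the paper instead writes $\lambda\Ff(\xi')(U)=U'$ for some $\lambda\in\F_q$ and then absorbs the scalar via $\xi=m_\lambda\xi'$; your formulation is in fact slightly cleaner.
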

\begin{proof} By Lemma \ref{c}, Property (c) does not need to be checked. We have by Theorem \ref{stabiliserspread} that Condition (A) is equivalent to the following condition: for any two $(n-1)$-subspaces $\pi,\pi'\subset \Pq$ with $\B(\pi)=\B(\pi')$, there is an element $\la \Ff(\xi')\ra_{q}$, with $\xi\in \GammaL(r,q^t)$ such that $\la \Ff(\xi')\ra_q (\pi)=\pi'$.

Now any two $(n-1)$-subspaces $\pi,\pi'\subset \Pq$ are of the form $\la U\ra_q, \la U'\ra_q$, for some $U,U'$ $n$-dimensional vector subspaces of $\F_q^{rt}$. We see from the definition that $\B(\la U\ra_q)=\B(\la U'\ra_q)$ if and only if $\B(U)=\B(U')$. Moreover, there is an element $\la \Ff(\xi')\ra_{q}$, with $\xi'\in \GammaL(r,q^t)$ such that $\la \Ff(\xi')\ra_q (\pi)=\pi'$ if and only if $\la \Ff(\xi')\ra_{q}(\la U\ra_q)=\la U'\ra_q$, and hence, if and only if there is and element $\xi'\in \GammaL(r,q^t)$ and some $\lambda\in \F_q$ such that $\lambda\Ff(\xi')(U)=U'$, or equivalently, (put $\xi=m_\lambda  \xi'$) if and only if there is an element $\xi\in \GammaL(r,q^t)$ such that $\Ff(\xi)(U)= U'$.
\end{proof}

It is now natural to define the following second condition for a linear set $L$:

\begin{itemize}
\item[(B)] Let $U$ and $U'$ be two $n$-dimensional subspaces of $\F_q^{rt}$ with $\B(U)=L=\B(U')$, then $U'=\Ff(m_\beta)(U)$ for some $\beta\in \F_{q^t}^*$.
\end{itemize}

The following theorem links property (B) with the representation problem for linear sets, as described in \cite{michel1}.
\begin{theorem}\label{rep} Let $\pi$ and $\pi'$ be two $(n-1)$-spaces of $\Pq$ intersecting in a point $P=\la \Ff(v)\ra_q$ with $\pi\cap \{\la \Ff(\alpha v)\ra_q\mid \alpha \in \F_{q^t}^*\}=\pi'\cap\{\la\Ff(\alpha v)\ra_q\mid\alpha \in \F_{q^t}^*\}=\{P\}$. Suppose that $\B(\pi)=\B(\pi')$ and that Condition (B) holds, then $\pi=\pi'$.
\end{theorem}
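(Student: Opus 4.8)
\textbf{Proof plan for Theorem \ref{rep}.}

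The plan is to use Condition (B) to get a very rigid relation between $\pi$ and $\pi'$, and then exploit the hypothesis about their common point lying on a fixed spread element to force the scalar to lie in $\F_q$. First I would write $\pi = \la U\ra_q$ and $\pi' = \la U'\ra_q$ for $n$-dimensional vector subspaces $U,U'$ of $\F_q^{rt}$. Since $\B(\pi) = \B(\pi')$, by definition $\B(U) = \B(U')$, so Condition (B) applies directly: there is some $\beta\in \F_{q^t}^*$ with $U' = \Ff(m_\beta)(U)$. The goal is then to show $\beta$ can be taken in $\F_q^*$, because in that case $\Ff(m_\beta)$ acts as a scalar on $\F_q^{rt}$, so $U' = U$ and hence $\pi = \pi'$.

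The key step is to use the point $P = \la \Ff(v)\ra_q$. Since $P\in\pi=\la U\ra_q$, we have $\Ff(v)\in U$ (up to an $\F_q$-scalar, which we may absorb by rescaling $v$). Applying $\Ff(m_\beta)$, the vector $\Ff(m_\beta)(\Ff(v)) = \Ff(\beta v)$ lies in $U' $, so the point $\la \Ff(\beta v)\ra_q$ lies in $\pi'$. But $\la \Ff(\beta v)\ra_q$ also lies in the spread element $\{\la \Ff(\alpha v)\ra_q \mid \alpha\in \F_{q^t}^*\}$ through $P$ (this is exactly the Desarguesian spread element $\Fb(\la v\ra_{q^t})$ of Lemma \ref{Bpunt}). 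By hypothesis, $\pi'$ meets this spread element only in the point $P$, so $\la \Ff(\beta v)\ra_q = \la \Ff(v)\ra_q = P$, which means $\Ff(\beta v) = \lambda \Ff(v) = \Ff(\lambda v)$ for some $\lambda\in \F_q^*$. Since $\Ff$ is a bijection, $\beta v = \lambda v$, and as $v\neq 0$ this gives $\beta = \lambda \in \F_q^*$.

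Finally I would conclude: with $\beta\in \F_q^*$, the map $m_\beta$ on $\F_{q^t}^r$ becomes an $\F_q$-scalar map, so $\Ff(m_\beta)$ is the scalar map $w\mapsto \beta w$ on $\F_q^{rt}$ (this follows from $\Ff(m_\beta)(\Ff(v)) = \Ff(\beta v) = \beta\Ff(v)$ using $\F_q$-linearity of $\Ff$). Hence $U' = \Ff(m_\beta)(U) = \beta U = U$, and therefore $\pi = \la U\ra_q = \la U'\ra_q = \pi'$, as required. The main obstacle is making sure the intersection hypothesis is invoked at the right moment: one must check that $\la\Ff(\beta v)\ra_q$ genuinely lies in $\pi'$ (which needs $\Ff(v)\in U$, i.e.\ that $P$ is really a point of $\pi$, and the compatibility $\Ff(m_\beta)(\Ff(v))=\Ff(\beta v)$ from Definition \ref{def}), and that it lies on the correct spread element through $P$; once both memberships are established, the hypothesis $\pi\cap\{\la\Ff(\alpha v)\ra_q\}=\pi'\cap\{\la\Ff(\alpha v)\ra_q\}=\{P\}$ does all the work.
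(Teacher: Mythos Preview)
Your proposal is correct and follows essentially the same approach as the paper: apply Condition~(B) to obtain $U'=\Ff(m_\beta)(U)$, track the image of the point $P=\la\Ff(v)\ra_q$ into $\pi'$ and into the spread element $\Fb(\la v\ra_{q^t})$, use the intersection hypothesis to force $\la\Ff(\beta v)\ra_q=P$, deduce $\beta\in\F_q^*$, and conclude $U'=U$. The paper phrases the middle step via the projective map $\la\Ff(m_\beta)\ra_q$ stabilising $\D$ elementwise (Lemma~\ref{omgekeerd2}), while you compute directly with vectors, but the arguments are the same.
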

\begin{proof} Let $\pi$ and $\pi'$ be two $(n-1)$-spaces satifying the properties of the statement. Let $\pi=\la U\ra_q$ and $\pi'=\la U'\ra_q$. Since $\B(\la U\ra_q)=\B(\la U'\ra_q)$, $\B(U)=\B(U')$ and by Condition (B), $U'=\Ff(m_\beta)(U)$ for some $\beta\in \F_{q^t}^*$. Now $\la \Ff(m_\beta)\ra_q(\la U\ra_q)=\la U'\ra_q$ and $\la \Ff(m_\beta)\ra_q$ stabilises $\D$ elementwise by Lemma \ref{omgekeerd2}, which implies that $\la \Ff(m_\beta)\ra_q(\la \Ff(v)\ra_q)=\la \Ff(v)\ra_q$. From this, we see that $\beta\in \F_q^*$, and hence, $\pi'=\la U'\ra_q=\la \Ff(m_\beta)\ra_q(\la U\ra_q)=\la U\ra_q=\pi$.
\end{proof}
Since Condition (A) is equivalent to Condition (A'), we immediately obtain the following.
\begin{theorem} \label{BimpliesA} Condition (B) implies Condition (A).
\end{theorem}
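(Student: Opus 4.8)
The plan is to reduce immediately to Condition (A$'$) via Theorem \ref{AenA'}, and then to observe that the scalar map $m_\beta$ provided by Condition (B) is already an admissible choice for the semilinear map required in (A$'$).

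First I would invoke Theorem \ref{AenA'}, which states that Condition (A) holds for $L$ if and only if Condition (A$'$) does: for any two $n$-dimensional $\F_q$-subspaces $U,U'$ of $\F_q^{rt}$ with $\B(U)=\B(U')$, there exists $\xi\in\GammaL(r,q^t)$ with $\Ff(\xi)(U)=U'$. Hence it suffices to check that (A$'$) follows from Condition (B).

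Then, taking arbitrary $n$-dimensional $\F_q$-subspaces $U,U'$ of $\F_q^{rt}$ with $\B(U)=L=\B(U')$, I would apply Condition (B) to obtain $\beta\in\F_{q^t}^*$ with $U'=\Ff(m_\beta)(U)$. Since $m_\beta\colon v\mapsto\beta v$ is an invertible $\F_{q^t}$-linear map, it lies in $\GL(r,q^t)\subseteq\GammaL(r,q^t)$, so $\xi=m_\beta$ is an admissible witness for (A$'$), namely $\Ff(\xi)(U)=U'$. By Theorem \ref{AenA'}, Condition (A) follows.

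There is essentially no obstacle here: the implication is entirely powered by Theorem \ref{AenA'}. Once Condition (A) has been rephrased as Condition (A$'$), the deduction is immediate, because Condition (B) is the stronger statement --- it forces the connecting map to be a scalar multiplication $m_\beta$, whereas (A$'$) only asks for some element of $\GammaL(r,q^t)$. The sole verification needed is that $m_\beta\in\GammaL(r,q^t)$, which is clear.
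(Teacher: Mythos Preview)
Your argument is correct and matches the paper's own reasoning exactly: the paper simply notes that since Condition (A) is equivalent to Condition (A$'$), the implication is immediate, and you have just spelled out that immediacy by observing $m_\beta\in\GL(r,q^t)\subseteq\GammaL(r,q^t)$.
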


In \cite{corrado}, the authors note that it is shown in \cite[Proposition 2.3]{bonoli} that condition (A) holds for linear blocking sets. This is true, but in fact it is shown in \cite[Proposition 2.3]{bonoli} that the stronger condition (B) also holds. Condition (A) is not equivalent to Condition (B) since it follows from \cite{corrado} that the scattered linear set $L=\{\la (\lambda,\lambda^q)\ra_{q^3}\mid \lambda\in \F_{q^3}\}$ satisfies Condition (A). However, it follows from Theorem \ref{rep} that it does not satisfy Condition (B): through a point $P$ of $\PG(5,q)$, contained in an element of the Desarguesian spread corresponding to $L$, there are exactly two planes $\pi$, $\pi'$, both meeting the spread element through $P$ exactly in the point $P$, such that $\B(\pi)=\B(\pi')=L$ (for a proof, see \cite{michel1}).  

Our goal is to prove an equivalent condition for Condition (A) in Theorem \ref{equiv}.

\begin{lemma}\label{lem3}
Suppose that there is an element $\xi\in \GammaL(r,q^t)$ such that $\Ff(\xi)(U)=U'$. Then $\la \xi\ra_{q^t}(\B(U))=\B(U')$.
\end{lemma}
\begin{proof} Let $\la v\ra_{q^t}$ be a point of $\B(U)$, then $\Ff(v)=u$ for some $u\in U^*$. Now $\Ff(\xi)(U)=U'$, so $\Ff(\xi)(\Ff(v))=\Ff(\xi(v))\in U'^*$. This implies that $\xi(v)$ in $\Ff^{-1}(U'^*)$, and hence, that $\la \xi(v)\ra_{q^t} \in \B(U')$. This shows that $\la \xi\ra_{q^t}\la v\ra_{q^t}\in \B(U')$ for all $\la v\ra_{q^t}\in \B(U)$.
\end{proof}

\begin{lemma}\label{lem4} Suppose that Condition (A) holds and let $\pi$ and $\pi'$ be $(n-1)$-dimensional subspaces of $\Pq$ with $\B(\pi)=\B(\pi')$. The element $\la \xi\ra_{q^t}$ obtained from the equivalent condition (A') is an element of the stabiliser of $\B(\pi)$ in $\PGammaL(r,q^t)$. \end{lemma}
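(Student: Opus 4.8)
The plan is to unwind the definitions and then invoke Lemma \ref{lem3} essentially verbatim. First I would write $\pi=\la U\ra_q$ and $\pi'=\la U'\ra_q$ for $n$-dimensional vector subspaces $U,U'$ of $\F_q^{rt}$ (recall $\B$ is well-defined on projective subspaces since $\l U=U$ for $\l\in \F_q^*$). As in the proof of Theorem \ref{AenA'}, the hypothesis $\B(\pi)=\B(\pi')$ is exactly the statement $\B(U)=\B(U')$, and the element $\la\xi\ra_{q^t}$ supplied by the equivalent Condition (A') corresponds to an element $\xi\in\GammaL(r,q^t)$ with $\Ff(\xi)(U)=U'$.

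Next I would apply Lemma \ref{lem3} to the pair $U,U'$: from $\Ff(\xi)(U)=U'$ it yields $\la\xi\ra_{q^t}(\B(U))=\B(U')$. Now I substitute $\B(U')=\B(U)$ on the right-hand side, so that $\la\xi\ra_{q^t}(\B(U))=\B(U)$; since $\B(\pi)=\B(U)$, this says precisely that $\la\xi\ra_{q^t}$ fixes the linear set $\B(\pi)$ setwise. As $\la\xi\ra_{q^t}$ is by construction an element of $\PGammaL(r,q^t)$, it therefore lies in the stabiliser of $\B(\pi)$ in $\PGammaL(r,q^t)$, which is the assertion.

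I do not expect a genuine obstacle here; the statement is a short corollary of Lemma \ref{lem3} once the notation is aligned. The only points requiring a little care are the standard bookkeeping translation between $(n-1)$-subspaces of $\Pq$ and $n$-dimensional vector subspaces of $\F_q^{rt}$, and making sure one works with the reformulation (A$'$) of Condition (A) from Theorem \ref{AenA'} (so that the relevant map is literally $\Ff(\xi)$ with $\xi\in\GammaL(r,q^t)$) rather than the original formulation; after that it is a direct chain of equalities.
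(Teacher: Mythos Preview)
Your proposal is correct and is essentially identical to the paper's own proof: write $\pi=\la U\ra_q$, $\pi'=\la U'\ra_q$, use Theorem~\ref{AenA'} to get $\xi\in\GammaL(r,q^t)$ with $\Ff(\xi)(U)=U'$, apply Lemma~\ref{lem3} to obtain $\la\xi\ra_{q^t}(\B(U))=\B(U')=\B(U)$, and conclude. There is nothing to add or correct.
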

\begin{proof} Let $\pi=\la U\ra_q$ and $\pi'=\la U'\ra_q$, then $\B(U)=\B(U')$. Since Condition (A) is equivalent to Condition (A') by Theorem \ref{AenA'}, we have that there is an element $\xi\in \GammaL(r,q^t)$ such that $\Ff(\xi)(U)=U'$. By Lemma \ref{lem3}, $\la \xi\ra_{q^t}(\B(U))=\B(U')$, and hence $\la \xi\ra_{q^t}(\B(\pi))=\B(\pi')=\B(\pi)$. This implies that $\la \xi\ra_{q^t}$ stabilises $\B(\pi)$.
\end{proof}

Recall that $\bar{\Ff}$ denotes the mapping that maps a point $\la v\ra_{q^t}$ of $\Pqt$ onto the element $\{\la \Ff(\alpha v)\ra_q\mid\alpha \in \F_{q^t}^\ast\}$ of the Desarguesian spread $\D$. Then $\bar{\Ff}(\B(\pi))$ is the set of spread elements of $\D$ corresponding to the points of $\B(\pi)$, and it is easy to see that $\bar{\Ff}(\B(\pi))$ is precisely the set of spread elements meeting $\pi$. 
\begin{lemma} \label{lem5} $|\PGammaL(rt,q)_{\D,\bar{\Ff}(\B(\pi))}|=|\PGammaL(r,q^t)_{\B(\pi)}|(q^t-1)/(q-1)$.
\end{lemma}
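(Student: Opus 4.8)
The plan is to exhibit $\PGammaL(rt,q)_{\D,\bar{\Ff}(\B(\pi))}$ as the full preimage of $\PGammaL(r,q^t)_{\B(\pi)}$ under a natural surjective homomorphism and then divide by the (already known) order of its kernel. First I would use Theorem~\ref{stabiliserspread}: every element of the stabiliser $\PGammaL(rt,q)_{\D}$ of the Desarguesian spread is of the form $\la\Ff(\xi)\ra_q$ for some $\xi\in\GammaL(r,q^t)$, and conversely each such element stabilises $\D$ by Lemma~\ref{omgekeerd2}. I would then define
$$h\colon \PGammaL(rt,q)_{\D}\longrightarrow \PGammaL(r,q^t),\qquad \la\Ff(\xi)\ra_q\longmapsto \la\xi\ra_{q^t}.$$
That $h$ is well defined needs a short check with the two scalar groups: since $\Ff$ is an embedding (Theorem~\ref{embedding}) sending $\F_q$-scalar maps to $\F_q$-scalar maps, the equality $\la\Ff(\xi)\ra_q=\la\Ff(\xi')\ra_q$ forces $\xi'=\lambda\xi$ for some $\lambda\in\F_q^\ast\subseteq\F_{q^t}^\ast$, whence $\la\xi'\ra_{q^t}=\la\xi\ra_{q^t}$. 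That $h$ is a homomorphism is immediate from $\Ff(\xi_1)\Ff(\xi_2)=\Ff(\xi_1\xi_2)$ (Theorem~\ref{embedding}), and $h$ is visibly surjective.

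Next I would compute $\ker h$: an element $\la\Ff(\xi)\ra_q$ lies in it precisely when $\la\xi\ra_{q^t}$ is trivial, i.e. when $\xi=m_\mu$ for some $\mu\in\F_{q^t}^\ast$. Thus $\ker h=\{\la\Ff(m_\mu)\ra_q\mid\mu\in\F_{q^t}^\ast\}$, which is exactly the elementwise stabiliser of $\D$ in $\PGammaL(rt,q)$, a group of order $(q^t-1)/(q-1)$ by Lemma~\ref{transitive}. The geometric content of $h$ is that the permutation $\la\Ff(\xi)\ra_q$ induces on the spread $\D$ agrees, under the bijection $\bar{\Ff}$ between the points of $\Pqt$ and the elements of $\D$, with the action of $\la\xi\ra_{q^t}$ on $\Pqt$: indeed $\la\Ff(\xi)\ra_q\bigl(\bar{\Ff}(\la v\ra_{q^t})\bigr)=\bar{\Ff}(\la\xi(v)\ra_{q^t})$, since $\{\Ff(\psi(\alpha)\xi(v))\mid\alpha\in\F_{q^t}\}=\{\Ff(\beta\xi(v))\mid\beta\in\F_{q^t}\}$. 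Consequently $\la\Ff(\xi)\ra_q$ fixes the subset $\bar{\Ff}(\B(\pi))$ of $\D$ setwise if and only if $\la\xi\ra_{q^t}$ fixes $\B(\pi)$ setwise.

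Putting this together, $\PGammaL(rt,q)_{\D,\bar{\Ff}(\B(\pi))}=h^{-1}\bigl(\PGammaL(r,q^t)_{\B(\pi)}\bigr)$. Since $\ker h$ (the elementwise stabiliser of $\D$) is contained in this preimage and $h$ restricts to a surjection of the preimage onto $\PGammaL(r,q^t)_{\B(\pi)}$ — a lift of $\la\xi\ra_{q^t}\in\PGammaL(r,q^t)_{\B(\pi)}$ being $\la\Ff(\xi)\ra_q$, which stabilises $\D$ by Lemma~\ref{omgekeerd2} and hence $\bar{\Ff}(\B(\pi))$ by the previous paragraph — the first isomorphism theorem gives $\PGammaL(rt,q)_{\D,\bar{\Ff}(\B(\pi))}/\ker h\cong\PGammaL(r,q^t)_{\B(\pi)}$, so that $|\PGammaL(rt,q)_{\D,\bar{\Ff}(\B(\pi))}|=|\PGammaL(r,q^t)_{\B(\pi)}|\,(q^t-1)/(q-1)$, as claimed. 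The part that genuinely needs care — and the step I would regard as the main obstacle — is the bookkeeping with the two scalar groups $\F_q^\ast$ and $\F_{q^t}^\ast$: one must be sure that $h$ is well defined on projective classes (not merely on representatives) and that its kernel is precisely the elementwise stabiliser of $\D$, since it is exactly there that the factor $(q^t-1)/(q-1)$ arises.
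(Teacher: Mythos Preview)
Your proof is correct and follows essentially the same route as the paper: both identify $\PGammaL(rt,q)_{\D,\bar{\Ff}(\B(\pi))}$ with the preimage of $\PGammaL(r,q^t)_{\B(\pi)}$ under the natural surjection from the spread stabiliser onto $\PGammaL(r,q^t)$, using Theorem~\ref{stabiliserspread} and Lemma~\ref{omgekeerd2}, and then divide by the elementwise stabiliser of size $(q^t-1)/(q-1)$. The only cosmetic difference is that the paper lifts to a common subgroup $G\subseteq\GammaL(r,q^t)$ and divides by the two scalar groups separately, whereas you package this directly at the projective level via the homomorphism $h$ and the first isomorphism theorem.
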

\begin{proof} Recall the fact that $\Ff$ provides an embedding of $\GammaL(r,q^t)$ into $\GammaL(rt,q)$ (Theorem \ref{embedding}). Consider the subgroup $G$ of $\GammaL(r,q^t)$ consisting of all elements $\xi$ such that $\la \xi\ra_{q^t}$ stabilises $\B(\pi)$. We see that every element of the subgroup $\{\Ff(g)\mid g\in G\}$ of $\GammaL(rt,q)$ has the property that $\la \Ff(g)\ra_q$ stabilises $\D$ and $\bar{\Ff}(\B(\pi))$. On the other hand, if we have an element $\la \psi\ra_q$ of $\PGammaL(rt,q)$ such that $\la \psi\ra_q$ stabilises $\D$, then we have shown in Theorem \ref{stabiliserspread} that $\psi=\Ff(\xi)$ for some $\xi\in \GammaL(r,q^t)$; if $\la \Ff(\xi)\ra_q$ stabilises $\bar{\Ff}(\B(\pi))$, then $\la\xi\ra_{q^t}(\B(\pi))=\B(\pi)$. 
This implies that $|\PGammaL(r,q^t)_{\B(\pi)}|=|G|/(q^t-1)$ and $|\PGammaL(rt,q)_{\D,\bar{\Ff}(\B(\pi))}|=|\Ff(G)|/(q-1)=|G|/(q-1)$.
\end{proof}

\begin{theorem} \label{equiv}Condition (A) holds for $(L=\B(\pi),n)$, where $\pi$ is an $(n-1)$-space in $\Pq$ if and only if the number of $(n-1)$-spaces $\pi'$ in $\Pq$ with $\B(\pi')=L$ equals
$$\frac{|\PGammaL(r,q^t)_{\B(\pi)}|}{|\PGammaL(rt,q)_{\D,\pi}|}\frac{(q^t-1)}{(q-1)}.$$
\end{theorem}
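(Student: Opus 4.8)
The plan is to realise the $(n-1)$-spaces $\pi'$ with $\B(\pi')=L$ as the elements of a set on which a suitable subgroup of $\PGammaL(rt,q)$ acts, and then to apply the orbit-stabiliser theorem. Write $\Sigma=\{\pi'\mid \pi'\text{ an }(n-1)\text{-space of }\Pq \text{ with } \B(\pi')=L\}$, so that $|\Sigma|$ is the number occurring in the statement, and note $\pi\in\Sigma$. Let $K=\PGammaL(rt,q)_{\D,\Fb(\B(\pi))}$ be the stabiliser in $\PGammaL(rt,q)$ of both the Desarguesian spread $\D$ and the set $\Fb(\B(\pi))$ of spread elements meeting $\pi$. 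First I would check that $K$ acts on $\Sigma$: by Theorem \ref{stabiliserspread} every element of $K$ is of the form $\la\Ff(\xi)\ra_q$ with $\xi\in\GammaL(r,q^t)$, its induced action on the elements of $\D$ is that of $\la\xi\ra_{q^t}$ on the points of $\Pqt$, and since $\Fb$ is a bijection from the points of $\Pqt$ onto the elements of $\D$, stabilising $\Fb(\B(\pi))$ is the same as stabilising $\B(\pi)=L$ pointwise as a set; combining this with Lemma \ref{lem3} gives $\B(\la\Ff(\xi)\ra_q(\pi'))=\la\xi\ra_{q^t}(\B(\pi'))=L$ for every $\pi'\in\Sigma$, so $K$ does act on $\Sigma$, and the orbit $\pi^K$ is contained in $\Sigma$.

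The crucial step is to prove that Condition (A) is equivalent to $K$ acting transitively on $\Sigma$. For one direction, assume $K$ is transitive on $\Sigma$; given $\pi=\la U\ra_q$ and $\pi'=\la U'\ra_q$ in $\Sigma$, pick $\la\Ff(\xi)\ra_q\in K$ with $\la\Ff(\xi)\ra_q(\pi)=\pi'$, so $\Ff(\xi)(U)=\mu U'$ for some $\mu\in\F_q^*$. Replacing $\xi$ by $m_{\mu^{-1}}\xi\in\GammaL(r,q^t)$ and using that $\Ff$ is a homomorphism (Theorem \ref{embedding}) together with the fact that $\Ff(m_{\mu^{-1}})$ acts as the scalar $\mu^{-1}$ on $\F_q^{rt}$, we obtain $\xi'\in\GammaL(r,q^t)$ with $\Ff(\xi')(U)=U'$; this is exactly Condition (A'), hence Condition (A) by Theorem \ref{AenA'}. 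Conversely, if Condition (A) holds and $\pi,\pi'\in\Sigma$, then Condition (A') produces $\xi\in\GammaL(r,q^t)$ with $\Ff(\xi)(U)=U'$; by Lemma \ref{omgekeerd2} the element $\la\Ff(\xi)\ra_q$ stabilises $\D$, and by Lemma \ref{lem4} the element $\la\xi\ra_{q^t}$ stabilises $\B(\pi)=L$, whence (again using that $\Fb$ is a bijection intertwining the two actions) $\la\Ff(\xi)\ra_q$ stabilises $\Fb(\B(\pi))$. Thus $\la\Ff(\xi)\ra_q\in K$ and maps $\pi$ to $\pi'$, so $K$ is transitive on $\Sigma$.

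Granting this equivalence, the proof concludes with a count. The orbit $\pi^K$ has size $|K|/|K_\pi|$, where $K_\pi$ is the stabiliser of $\pi$ in $K$. Any element of $\PGammaL(rt,q)$ fixing both $\D$ and $\pi$ automatically permutes the spread elements and sends a spread element meeting $\pi$ to a spread element meeting $\pi$, hence fixes $\Fb(\B(\pi))$; therefore $K_\pi=\PGammaL(rt,q)_{\D,\pi}$. By Lemma \ref{lem5}, $|K|=|\PGammaL(r,q^t)_{\B(\pi)}|\,(q^t-1)/(q-1)$, so
$$|\pi^K|=\frac{|\PGammaL(r,q^t)_{\B(\pi)}|}{|\PGammaL(rt,q)_{\D,\pi}|}\frac{q^t-1}{q-1}.$$
Since $\pi^K\subseteq\Sigma$ always, $K$ acts transitively on $\Sigma$ — equivalently, by the previous paragraph, Condition (A) holds for $(L,n)$ — if and only if $|\Sigma|=|\pi^K|$, which is precisely the stated formula.

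The step I expect to be the main obstacle is the equivalence "$K$ transitive on $\Sigma$ $\iff$ Condition (A)": one must move carefully between the $\F_q$-scalars implicit in $\PGL(rt,q)$ and the $\F_{q^t}$-scalars in $\GammaL(r,q^t)$, and invoke Lemma \ref{lem4} at exactly the right moment so that the collineation manufactured from (A') genuinely lies in $K$ and not merely in the stabiliser of $\D$. The identification $K_\pi=\PGammaL(rt,q)_{\D,\pi}$, while short, also needs the explicit remark that fixing $\pi$ inside the stabiliser of $\D$ forces the set of spread elements meeting $\pi$ to be fixed.
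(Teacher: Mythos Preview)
Your argument is correct and follows essentially the same route as the paper: you work with the group $K=\PGammaL(rt,q)_{\D,\Fb(\B(\pi))}$, show that Condition~(A) is equivalent to $K$ acting transitively on the set $\Sigma$ of $(n-1)$-spaces defining $L$, and then compute the orbit length via orbit--stabiliser together with Lemma~\ref{lem5} and the identification $K_\pi=\PGammaL(rt,q)_{\D,\pi}$. Your write-up is in fact more explicit than the paper's (you spell out why $K$ acts on $\Sigma$ and handle the $\F_q$-scalar adjustment carefully), but the underlying strategy is the same.
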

\begin{proof} Let $X$ be the number of $(n-1)$-spaces $\pi'$ in $\Pq$ with $\B(\pi')=L$. Note that if $\la \xi\ra_{q^t}$ stabilises $\B(\pi)$, then $\la \Ff(\xi)\ra_q$ stabilises $\bar{\Ff}(\B(\pi))\ra_q$. Using Lemma \ref{lem4}, we see that Condition (A) holds if and only if every $\pi'$ for which $\B(\pi')=L$ is in the orbit of $\pi$ under $\PGammaL(rt,q)_{\D,\bar{\Ff}(\B(\pi))}$. Hence, if and only if $X$ equals the length of this orbit, which is $|\PGammaL(rt,q)_{\D,\bar{\Ff}(\B(\pi))}|/|\PGammaL(rt,q)_{\D,\bar{\Ff}(\B(\pi)),\pi}|$. Now since $\bar{\Ff}(\B(\pi))$ is the set of elements of $\D$ meeting $\pi$, it is clear that $\PGammaL(rt,q)_{\D,\bar{\Ff}(\B(\pi)),\pi}$ equals $\PGammaL(rt,q)_{\D,\pi}$ and by Lemma \ref{lem5}, $|\PGammaL(rt,q)_{\D,\bar{\Ff}(\B(\pi))}|=|\PGammaL(r,q^t)_{\B(\pi)}|(q^t-1)/(q-1)$.
\end{proof}
\begin{lemma} If $X$ is the number of $(n-1)$-spaces $\pi$ in $\Pq$ with $\B(\pi)=L$, then the number of $(n-1)$-spaces through a fixed point $Q\in \B(\pi)$ equals $X\frac{q-1}{q^t-1}.|\B(Q)\cap \pi|$.
\end{lemma}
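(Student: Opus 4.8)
The plan is a double count. Write $\mathcal{P}_L$ for the set of $(n-1)$-spaces $\pi'$ of $\Pq$ with $\B(\pi')=L$, so $|\mathcal{P}_L|=X$, and let $S=\Fb(Q)$ be the element of $\D$ determined by the point $Q$ of $L$, so that $|\B(Q)\cap\pi|$ is the number of points of $\pi$ lying on $S$. I would count, in two ways, the set of pairs $(\pi',P)$ with $\pi'\in\mathcal{P}_L$ and $P$ a point of $\pi'$ lying on $S$. Two preliminary facts are needed. First, every $\pi'\in\mathcal{P}_L$ meets $S$: since $\Fb(\B(\pi'))$ is exactly the set of elements of $\D$ meeting $\pi'$ and $Q\in\B(\pi')=L$, the element $S=\Fb(Q)$ meets $\pi'$. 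Second, the elementwise stabiliser $G_0$ of $\D$ in $\PGammaL(rt,q)$ leaves $\mathcal{P}_L$ invariant: by the proof of Lemma~\ref{transitive}, $G_0=\{\la\Ff(m_\beta)\ra_q\mid\beta\in\F_{q^t}^\ast\}$ has order $\frac{q^t-1}{q-1}$ and acts sharply transitively on the points of $S$; applying Lemma~\ref{lem3} with $\xi=m_\beta$ and using that $\la m_\beta\ra_{q^t}$ is the identity of $\PGammaL(r,q^t)$, one gets $\B(\Ff(m_\beta)(U'))=\la m_\beta\ra_{q^t}(\B(U'))=\B(U')$, so $\la\Ff(m_\beta)\ra_q$ maps each $\pi'\in\mathcal{P}_L$ to another member of $\mathcal{P}_L$ (the dimension is preserved as $\Ff(m_\beta)\in\GL(rt,q)$).

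Counting the pairs by their second coordinate: for a point $P$ of $S$, let $N_P$ be the number of $\pi'\in\mathcal{P}_L$ through $P$. Because $G_0$ acts transitively on the $\frac{q^t-1}{q-1}$ points of $S$ while permuting $\mathcal{P}_L$, the number $N_P$ is the same for every $P\in S$; denote this common value by $N$, which is exactly ``the number of $(n-1)$-spaces through the fixed point $Q$'' in the statement (the point $Q$ of $\Pqt$ being understood through the spread element $S$ it determines). Hence the number of pairs equals $N\cdot\frac{q^t-1}{q-1}$. Counting the pairs by their first coordinate gives $\sum_{\pi'\in\mathcal{P}_L}|S\cap\pi'|$. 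Equating the two counts yields
\[
N\cdot\frac{q^t-1}{q-1}=\sum_{\pi'\in\mathcal{P}_L}|S\cap\pi'|.
\]

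It then remains to show that $|S\cap\pi'|=|S\cap\pi|=|\B(Q)\cap\pi|$ for every $\pi'\in\mathcal{P}_L$; granting this, the right-hand side is $X\cdot|\B(Q)\cap\pi|$ and we obtain $N=X\frac{q-1}{q^t-1}|\B(Q)\cap\pi|$, as claimed. Writing $\pi'=\la U'\ra_q$ and letting $T$ be the $t$-dimensional $\F_q$-subspace with $S=\la T\ra_q$, one has $S\cap\pi'=\la U'\cap T\ra_q$, and $\Ff^{-1}(U'\cap T)$ is the $\F_q$-subspace of the $\F_{q^t}$-line $Q=\la w\ra_{q^t}$ consisting of those $\alpha w$ that lie in $\Ff^{-1}(U')$; its $\F_q$-dimension is the weight of $Q$ in the linear set $L=\B(U')$. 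The main obstacle is precisely this last point: one must check that this weight — equivalently $|S\cap\pi'|$ — depends only on $L$ and $Q$, and not on the chosen representation $U'$ of $L$. Everything else is formal bookkeeping together with the sharp transitivity of Lemma~\ref{transitive} and the invariance of $\mathcal{P}_L$ under $G_0$.
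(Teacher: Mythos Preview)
Your approach---double count the incidences between $\mathcal{P}_L$ and the points of the spread element $S=\Fb(Q)$, using the sharp transitivity of the elementwise stabiliser (Lemma~\ref{transitive}) to make $N_P$ constant on $S$---is exactly the paper's argument; the paper's proof is the one-sentence averaging version of what you wrote.

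The obstacle you isolate is genuine and is not resolved in the paper either. From the double count one only gets
\[
N\cdot\frac{q^t-1}{q-1}=\sum_{\pi'\in\mathcal{P}_L}|S\cap\pi'|,
\]
and the passage to $X\cdot|\Fb(Q)\cap\pi|$ requires that the weight of $Q$ be the same in every representation $\pi'\in\mathcal{P}_L$. This is \emph{not} automatic: the weight of a point in a linear set can in general depend on the defining subspace, which is precisely the phenomenon underlying the later literature on ``simple'' linear sets and on Conditions~(A) and~(B) discussed in this very section. So as written, the lemma needs either the extra hypothesis that all $\pi'\in\mathcal{P}_L$ give $Q$ the same weight, or the right-hand side should be replaced by $\frac{q-1}{q^t-1}\sum_{\pi'\in\mathcal{P}_L}|\Fb(Q)\cap\pi'|$. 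You were right to flag it; the paper's terse proof makes the same implicit leap without comment.
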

\begin{proof} As the elementwise stabiliser of $\D$ acts transitively on the points of $\B(\pi)$ by Lemma \ref{transitive}, for each point $Q$, the number of $(n-1)$-spaces $\pi$ through $Q\in \B(\pi)$ equals the average number of such spaces, i.e. $X\frac{q-1}{q^t-1}.|\B(Q)\cap \pi|$.
\end{proof}

\begin{corollary} \label{gevA} Condition (A) holds for $(L=\B(\pi),n)$, where $\pi$ is an $(n-1)$-space in $\Pq$ if and only if the number of $(n-1)$-spaces $\pi'$ in $\Pq$ with $\B(\pi')=L$ through a fixed point $Q$ in $\B(\pi)$ equals
$$\frac{|\PGammaL(r,q^t)_{\B(\pi)}|}{|\PGammaL(rt,q)_{\D,\pi}|}|\B(Q)\cap \pi|.$$
\end{corollary}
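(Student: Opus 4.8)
The plan is to obtain Corollary~\ref{gevA} directly by feeding the counting lemma that immediately precedes it into the criterion of Theorem~\ref{equiv}. Write $X$ for the number of $(n-1)$-spaces $\pi'$ in $\Pq$ with $\B(\pi')=L$, and write $Y$ for the number of such $\pi'$ passing through a fixed point $Q$ of $\B(\pi)$. The two facts I would combine are: first, Theorem~\ref{equiv}, stating that Condition~(A) holds for $(L,n)$ if and only if
$$X=\frac{|\PGammaL(r,q^t)_{\B(\pi)}|}{|\PGammaL(rt,q)_{\D,\pi}|}\frac{q^t-1}{q-1};$$
second, the preceding lemma, which gives the exact relation $Y=X\,\frac{q-1}{q^t-1}\,|\B(Q)\cap\pi|$.

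First I would record that in this relation the quantity $|\B(Q)\cap\pi|$ does not depend on the chosen point $Q$ of $\B(\pi)$ and is nonzero: this is exactly what the proof of the preceding lemma exploits, namely that by Lemma~\ref{transitive} the elementwise stabiliser of $\D$ in $\PGammaL(rt,q)$ acts transitively on the points of a spread element (each of its elements inducing the identity on $\Pqt$, hence fixing $L$ and every $\B(\pi')$), so the number of relevant $(n-1)$-spaces through a point equals the average over all points of $\B(\pi)$, and $\pi$ itself is one such space through $Q$. Consequently the relation may be solved for $X$:
$$X=Y\,\frac{q^t-1}{q-1}\,\frac{1}{|\B(Q)\cap\pi|}.$$

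Substituting this expression for $X$ into the equivalence of Theorem~\ref{equiv}, Condition~(A) holds for $(L,n)$ if and only if
$$Y\,\frac{q^t-1}{q-1}\,\frac{1}{|\B(Q)\cap\pi|}=\frac{|\PGammaL(r,q^t)_{\B(\pi)}|}{|\PGammaL(rt,q)_{\D,\pi}|}\frac{q^t-1}{q-1};$$
cancelling the common nonzero factor $\frac{q^t-1}{q-1}$ and multiplying through by $|\B(Q)\cap\pi|$ turns this into
$$Y=\frac{|\PGammaL(r,q^t)_{\B(\pi)}|}{|\PGammaL(rt,q)_{\D,\pi}|}\,|\B(Q)\cap\pi|,$$
which is precisely the asserted statement. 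Since every step is a formal consequence of Theorem~\ref{equiv} and the counting lemma, there is no genuine obstacle; the only points that warrant care are the $Q$-independence and non-vanishing of $|\B(Q)\cap\pi|$ (supplied by Lemma~\ref{transitive} together with $Q\in\B(\pi)$) and the legitimacy of cancelling the factor $\frac{q^t-1}{q-1}$.
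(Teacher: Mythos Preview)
Your approach is correct and is precisely what the paper intends: the corollary carries no separate proof in the paper because it is the immediate combination of Theorem~\ref{equiv} with the counting lemma that directly precedes it, exactly as you outline.

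One small correction, though it does not affect the argument: the quantity $|\B(Q)\cap\pi|$ is \emph{not} independent of $Q$ in general. It is the weight (with respect to $\pi$) of the point of $L$ corresponding to the spread element through $Q$, and a linear set may have points of different weights. Lemma~\ref{transitive} only gives transitivity on the points of a \emph{single} spread element, so it yields constancy of $Y$ (and trivially of $|\B(Q)\cap\pi|$) within one spread element, not across all of $\B(\pi)$. Fortunately your proof never uses this independence: for any fixed $Q$ whose spread element meets $\pi$ one has $|\B(Q)\cap\pi|\geq 1$, so the relation $Y=X\frac{q-1}{q^t-1}|\B(Q)\cap\pi|$ from the lemma can be inverted and fed into Theorem~\ref{equiv}, yielding the stated equivalence for that $Q$. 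Since both sides of the corollary's displayed equation depend on $Q$ in the same way (via the lemma), the equivalence with Condition~(A) holds for each choice of $Q$.
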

Finally, we end this section with two corollaries providing equivalent descriptions for Conditions (A) and (B).
\begin{lemma} \label{gevB} Condition (B) holds for $(L=\B(\pi),n)$, where $\pi$ is an  $(n-1)$-space in $\Pq$ if and only if the number of $(n-1)$-spaces $\pi'$ in $\Pq$ with $\B(\pi')=L$ in $\B(\pi)$ equals
$$\frac{q^t-1}{q-1},$$ and if and only if the number of $(n-1)$-spaces $\pi'$ in $\Pq$ with $\B(\pi')=L$ in $\B(\pi)$ through any fixed point $Q$ in $\B(\pi)$ equals
$$|\B(Q)\cap \pi|.$$
\end{lemma}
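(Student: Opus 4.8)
The plan is to run, \emph{mutatis mutandis}, the orbit-counting argument that proves the Condition (A) statements (Theorem~\ref{equiv} and Corollary~\ref{gevA}), but with the setwise stabiliser $\PGammaL(rt,q)_{\D,\bar{\Ff}(\B(\pi))}$ replaced by the \emph{elementwise} stabiliser of $\D$ in $\PGammaL(rt,q)$. Recall from Theorem~\ref{stabiliserspread}, together with the Corollary following Theorem~\ref{vectorelementwise} and Lemma~\ref{transitive}, that this elementwise stabiliser is $H=\{\la\Ff(m_\beta)\ra_q\mid\beta\in\F_{q^t}^\ast\}$ and has order $(q^t-1)/(q-1)$. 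Throughout, write $\mathcal{P}_L$ for the set of $(n-1)$-spaces $\pi'$ of $\Pq$ with $\B(\pi')=L$.

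First I would recast Condition (B) as the assertion that $\mathcal{P}_L$ is a single $H$-orbit. Writing $\pi=\la U\ra_q$ and $\pi'=\la U'\ra_q$ with $U,U'$ the corresponding $n$-dimensional $\F_q$-subspaces, the relation $U'=\Ff(m_\beta)(U)$ is, after absorbing $\F_q$-scalars via $\lambda\,\Ff(m_\beta)=\Ff(m_{\lambda\beta})$ (Theorem~\ref{embedding}), equivalent to $\pi'=\la\Ff(m_\beta)\ra_q(\pi)$; and, by Lemma~\ref{lem3} applied to the scalar map $m_\beta$ (whose projective image $\la m_\beta\ra_{q^t}$ is the identity of $\PGammaL(r,q^t)$), any such $\la\Ff(m_\beta)\ra_q$ indeed maps $\mathcal{P}_L$ into itself. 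Hence Condition (B) holds for $(L,n)$ precisely when $\mathcal{P}_L=\pi^H$. Applying the orbit--stabiliser theorem, $|\pi^H|=|H|/|H_\pi|$, where $H_\pi$ consists of the $\la\Ff(m_\beta)\ra_q$ with $\Ff(m_\beta)(U)=U$; the set $\{\gamma\in\F_{q^t}:\Ff(m_\gamma)(U)=U\}\cup\{0\}$ is a subfield of $\F_{q^t}$ containing $\F_q$, and (since $q$ is the maximum field of linearity of $L$, equivalently no $U$ with $\B(U)=L$ is $\F_{q'}$-invariant for a proper overfield $\F_{q'}$ of $\F_q$) this subfield is $\F_q$ itself, so $H_\pi$ is trivial and every $H$-orbit in $\mathcal{P}_L$ has length exactly $(q^t-1)/(q-1)$. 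Therefore $\mathcal{P}_L$ is a single orbit if and only if $|\mathcal{P}_L|=(q^t-1)/(q-1)$, which is the first stated equivalence. The second equivalence is then immediate from the lemma preceding Corollary~\ref{gevA}: for a point $Q$ covered by the spaces of $\mathcal{P}_L$, the number of $\pi'\in\mathcal{P}_L$ through $Q$ is $|\mathcal{P}_L|\cdot\frac{q-1}{q^t-1}\cdot|\B(Q)\cap\pi|$, which equals $|\B(Q)\cap\pi|$ exactly when $|\mathcal{P}_L|=(q^t-1)/(q-1)$.

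The only step that requires genuine care is the triviality of $H_\pi$, i.e.\ that $H$ acts \emph{freely}, not merely transitively, on $\mathcal{P}_L$: this is exactly the standard non-degeneracy assumption that $q$ is the (maximum) field of linearity of $L$. Without it, some $U$ with $\B(U)=L$ is invariant under multiplication by an element of a proper overfield $\F_{q'}$ of $\F_q$, the orbit length drops to $(q^t-1)/(q-1)\big/|H_\pi|=(q^t-1)/(q'-1)$, and the count $(q^t-1)/(q-1)$ in the lemma (and correspondingly $|\B(Q)\cap\pi|$) would have to be replaced by these divided values. Everything else is the same bookkeeping as in the Condition (A) case, so I expect this freeness point to be the sole obstacle.
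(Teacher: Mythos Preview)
The paper states Lemma~\ref{gevB} without proof, so there is nothing to compare against; your task was really to supply the missing argument, and you have done so along exactly the lines the paper's set-up suggests. Your reformulation of Condition~(B) as the statement that $\mathcal P_L$ is a single orbit under the elementwise stabiliser $H=\{\la\Ff(m_\beta)\ra_q:\beta\in\F_{q^t}^*\}$ is correct (this is just Theorem~\ref{stabiliserspread} at the projective level), and the passage from the first equivalence to the second via the unnumbered lemma before Corollary~\ref{gevA} is immediate.

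You are also right to isolate the freeness of the $H$-action as the one genuine issue. Your argument that $\{\gamma\in\F_{q^t}:\Ff(m_\gamma)(U)=U\}$ is a subfield of $\F_{q^t}$ containing $\F_q$ is sound, so $H_\pi$ is trivial precisely when no $U$ with $\B(U)=L$ is invariant under a proper overfield of $\F_q$; under that hypothesis every $H$-orbit in $\mathcal P_L$ has length $(q^t-1)/(q-1)$ and both equivalences follow as you say. This hypothesis is not stated in the lemma, and without it the forward implication can fail (if $H_\pi$ is nontrivial then $|\mathcal P_L|=|\pi^H|=(q^t-1)/((q-1)|H_\pi|)$ even when Condition~(B) holds). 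So your caveat is not an obstacle to your proof but a genuine omission in the statement of Lemma~\ref{gevB}; the lemma should be read under the standing convention that $q$ is the maximum field of linearity of $L$, and with that convention your argument is complete.
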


\begin{corollary} If Condition (B) holds for $(L=\B(\pi),n)$ then $|\PGammaL(r,q^t)_{\B(\pi)}|=|\PGammaL(rt,q)_{\D,\pi}|$. If $|\PGammaL(r,q^t)_{\B(\pi)}|=|\PGammaL(rt,q)_{\D,\pi}|$ then Conditions (A) and (B) are equivalent for $(L=\B(\pi),n)$. \end{corollary}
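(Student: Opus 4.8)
The plan is to deduce this corollary purely formally from the two counting characterisations already established, namely Theorem~\ref{equiv} for Condition~(A) and Lemma~\ref{gevB} for Condition~(B), together with the implication (B)$\Rightarrow$(A) of Theorem~\ref{BimpliesA}. Throughout the argument I would write $X$ for the number of $(n-1)$-spaces $\pi'$ in $\Pq$ with $\B(\pi')=L$; this is the single quantity that occurs (with different prescribed values) in both Theorem~\ref{equiv} and Lemma~\ref{gevB}, so the whole proof amounts to comparing those prescribed values. I would say so explicitly at the outset, since the only real bookkeeping point is to identify the count in Lemma~\ref{gevB} with the count in Theorem~\ref{equiv}.

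For the first assertion I would argue as follows. Assume Condition~(B) holds for $(L=\B(\pi),n)$. By Theorem~\ref{BimpliesA}, Condition~(A) then holds as well. Applying Lemma~\ref{gevB} to (B) gives $X=\frac{q^t-1}{q-1}$, while applying Theorem~\ref{equiv} to (A) gives
$$X=\frac{|\PGammaL(r,q^t)_{\B(\pi)}|}{|\PGammaL(rt,q)_{\D,\pi}|}\cdot\frac{q^t-1}{q-1}.$$
Equating these two expressions for $X$ and cancelling the nonzero integer $\frac{q^t-1}{q-1}$ yields $|\PGammaL(r,q^t)_{\B(\pi)}|=|\PGammaL(rt,q)_{\D,\pi}|$, which is the claim.

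For the second assertion, suppose $|\PGammaL(r,q^t)_{\B(\pi)}|=|\PGammaL(rt,q)_{\D,\pi}|$. Since (B)$\Rightarrow$(A) always holds by Theorem~\ref{BimpliesA}, it suffices to prove (A)$\Rightarrow$(B) under this hypothesis. So assume Condition~(A) holds. By Theorem~\ref{equiv}, $X=\frac{|\PGammaL(r,q^t)_{\B(\pi)}|}{|\PGammaL(rt,q)_{\D,\pi}|}\cdot\frac{q^t-1}{q-1}$, which by the hypothesis equals $\frac{q^t-1}{q-1}$. Now Lemma~\ref{gevB} (in its first formulation, counting all $(n-1)$-spaces $\pi'$ with $\B(\pi')=L$) states precisely that $X=\frac{q^t-1}{q-1}$ is equivalent to Condition~(B). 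Hence (A)$\Rightarrow$(B), and combined with Theorem~\ref{BimpliesA} we conclude that Conditions~(A) and~(B) are equivalent for $(L=\B(\pi),n)$.

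Since the substance of the argument is already contained in Theorems~\ref{equiv} and~\ref{BimpliesA} and Lemma~\ref{gevB}, I do not expect any serious obstacle. The only points requiring a little care are (i) checking that the quantity $X$ counted in Lemma~\ref{gevB} is literally the same as the one counted in Theorem~\ref{equiv}, and (ii) observing that $\frac{q^t-1}{q-1}$ is a nonzero integer, so that the cancellation used in the first assertion is legitimate. If desired, one could also note in passing that the hypothesis of the second assertion says exactly that the index $[\PGammaL(rt,q)_{\D,\bar{\Ff}(\B(\pi))}:\PGammaL(rt,q)_{\D,\pi}]$ equals $\frac{q^t-1}{q-1}$, via Lemma~\ref{lem5}, but this interpretation is not needed for the proof.
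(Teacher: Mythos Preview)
Your proposal is correct and follows essentially the same approach as the paper's own proof: both derive the corollary by combining Theorem~\ref{BimpliesA}, Theorem~\ref{equiv}, and Lemma~\ref{gevB}, equating the two expressions for the count $X$ and cancelling the factor $\frac{q^t-1}{q-1}$. Your write-up is slightly more explicit about the bookkeeping (identifying the counts and justifying the cancellation), but the argument is the same.
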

\begin{proof} By Theorem \ref{BimpliesA}, if Condition (B) holds, then Condition (A) holds, and hence, by Lemma \ref{equiv}, the number of $(n-1)$-spaces $\pi'$ in $\Pq$ with $\B(\pi')=L$, say $X$, equals $\frac{|\PGammaL(r,q^t)_{\B(\pi)}|}{|\PGammaL(rt,q)_{\D,\pi}|}\frac{q^t-1}{q-1}$. But by Corollary \ref{gevB}, Condition (B) holds if and only if $X=\frac{q^t-1}{q-1}$, so this implies that $|\PGammaL(r,q^t)_{\B(\pi)}|=|\PGammaL(rt,q)_{\D,\pi}|$. 

Vice versa, suppose that $|\PGammaL(r,q^t)_{\B(\pi)}|=|\PGammaL(rt,q)_{\D,\pi}|$, and that Condition (A) holds, then by Lemma \ref{equiv}, $X=\frac{|\PGammaL(r,q^t)_{\B(\pi)}|}{|\PGammaL(rt,q)_{\D,\pi}|}\frac{q^t-1}{q-1}=\frac{q^t-1}{q-1}$. Now Corollary \ref{gevB} shows that Condition (B) holds.
\end{proof}

\section{Appendix: a remark on the embeddability of $\PGL(r,q^t)$ in $\PGL(rt,q)$}\label{appendix}
We have seen in Remark \ref{opmerkinginbedding} that the natural extension of the field reduction map $\bar{\Ff}$ to elements of $\PGL(r,q^t)$ does not provide an embedding of $\PGL(r,q^t)$ in $\PGL(rt,q)$. In \cite{guglielmo}, the author claims to have constructed an embedding of a certain group $G$ isomorphic to $\PGL(r,q^t)$ embedded in $\PGL(rt,q)$ but one can check that the assertion that the subgroup $G$ is isomorphic to $\PGL(r,q^t)$ is not correct: the group $G$ is isomorphic to $\GL(r,q^t)/Z$ in $\PGL(rt,q)$, and is exactly the stabiliser of the Desarguesian $(t-1)$-spread in $\PG(rt-1,q)$. 
But these facts of course do not imply that it is in general impossible to embed $\PGL(r,q^t)$ in $\PGL(rt,q)$; we have the following theorem.
\begin{lemma} If $\gcd(q^t-1,r)=1$, then $\PGL(r,q^t)$ can be embedded in $\PGL(rt,q)$.
\end{lemma}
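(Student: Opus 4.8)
The plan is to exploit the fact that, under the hypothesis $\gcd(q^t-1,r)=1$, the group $\GL(r,q^t)$ is a direct product of its centre $Z_0=\{m_\beta\mid\beta\in\F_{q^t}^\ast\}$ and the subgroup $\SL(r,q^t)$; indeed, the map $\GL(r,q^t)\to\F_{q^t}^\ast$, $g\mapsto\det(g)$, restricted to $Z_0$ sends $m_\beta$ to $\beta^r$, which is a bijection of $\F_{q^t}^\ast$ precisely when $\gcd(q^t-1,r)=1$. Hence $Z_0\cap\SL(r,q^t)=1$ and $Z_0\SL(r,q^t)=\GL(r,q^t)$, so $\GL(r,q^t)=Z_0\times\SL(r,q^t)$. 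Consequently $\PGL(r,q^t)\cong\GL(r,q^t)/Z_0\cong\SL(r,q^t)$ (as abstract groups, since the centre of $\SL(r,q^t)$ is the scalar matrices of $r$-th-power-one determinant, which is again trivial under the coprimality assumption).

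First I would make this identification precise: show $\PGL(r,q^t)\cong\SL(r,q^t)$ via the composite $\SL(r,q^t)\hookrightarrow\GL(r,q^t)\twoheadrightarrow\PGL(r,q^t)$, checking injectivity (the kernel is $Z_0\cap\SL(r,q^t)=1$) and surjectivity (every coset $m_\beta g Z_0$ with $g\in\SL$ already meets $\SL$, because we may absorb any scalar into $Z_0$ using that $\beta\mapsto\beta^r$ is onto). Next I would invoke Theorem \ref{embedding}: the vector field reduction map $\Ff$ gives an embedding of $\GL(r,q^t)$ into $\GL(rt,q)$, hence in particular an embedding $\iota\colon\SL(r,q^t)\hookrightarrow\GL(rt,q)$, namely $g\mapsto\Ff(g)$. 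Composing with the natural projection $\GL(rt,q)\twoheadrightarrow\PGL(rt,q)$ yields a homomorphism $\bar\iota\colon\SL(r,q^t)\to\PGL(rt,q)$, $g\mapsto\la\Ff(g)\ra_q$.

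The remaining point, and the only place where something could go wrong, is that $\bar\iota$ is injective: its kernel consists of those $g\in\SL(r,q^t)$ with $\Ff(g)$ an $\F_q$-scalar map on $\F_q^{rt}$, i.e. $\Ff(g)=\Ff(m_\mu)$ for some $\mu\in\F_q^\ast$ (using that the $\F_q$-scalars are exactly the $\Ff(m_\mu)$, $\mu\in\F_q^\ast$, which follows from Theorem \ref{vectorelementwise}). Since $\Ff$ is injective on $\GL(r,q^t)$ by Theorem \ref{embedding}, this forces $g=m_\mu$ with $\mu\in\F_q^\ast$; but then $\det g=\mu^r=1$ inside $\F_{q^t}^\ast$, and as $\mu\in\F_q^\ast$ and $\gcd(q^t-1,r)=1$ we get $\mu=1$, so $g$ is the identity. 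Hence $\bar\iota$ is an embedding of $\SL(r,q^t)\cong\PGL(r,q^t)$ into $\PGL(rt,q)$. The main obstacle is really just keeping the two different ``centre'' subgroups straight — the $\F_q$-scalars versus the $\F_{q^t}$-scalars inside $\GL(rt,q)$ — and verifying that the coprimality hypothesis is exactly what is needed both to split $\GL(r,q^t)$ and to kill the kernel of $\bar\iota$; the rest is the bookkeeping already set up in Section 2.
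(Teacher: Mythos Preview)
Your proposal is correct and follows essentially the same route as the paper: identify $\PGL(r,q^t)$ with $\SL(r,q^t)$ under the coprimality hypothesis, then push $\SL(r,q^t)$ through $\Ff$ into $\GL(rt,q)$ and project to $\PGL(rt,q)$, checking that only the identity lands in the $\F_q$-scalars. The only cosmetic differences are that the paper obtains $\SL(r,q^t)\cong\PGL(r,q^t)$ by a cardinality count (via $\PSL=\SL$) rather than your direct-product splitting, and that the paper observes $\gcd(q-1,r)=1$ to kill the kernel whereas you use $\gcd(q^t-1,r)=1$ directly; also, invoking Theorem \ref{vectorelementwise} is a slight detour, since the fact that the $\F_q$-scalars on $\F_q^{rt}$ are exactly the $\Ff(m_\mu)$ with $\mu\in\F_q^\ast$ is immediate from the $\F_q$-linearity of $\Ff$.
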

\begin{proof} For this proof, we will switch between the representation of the elements of $\GL$ as matrices, and the representation as linear maps. Denote by $\SL(r,q^t)$ the group of $(r\times r)$-matrices with entries in $\F_{q^t}$ that have determinant one. Let $Z$ denote the center of $\mathrm{GL}(r,q^t)$, so $Z$ is the set of scalar matrices. Recall that $\PSL(r,q^t)=\SL(r,q^t)/(Z\cap \SL(r,q^t))$. Since $Z\cap \SL(r,q^t)$ consists of all scalar matrices with determinant one, it is easy to see that if $\gcd(q^t-1,r)=1$, then this set only contains the identity matrix, and hence in this case, $\PSL(r,q^t)=\SL(r,q^t)$.
Now $\PSL(r,q^t))\leq \PGL(r,q^t)$, and hence, if $\gcd(r,q^t-1)=1$, $\SL(r,q^t)\leq \PGL(r,q^t)$. But $|\SL(r,q^t)|=\frac{|\GL(r,q^t)|}{q^t-1}=|\PGL(r,q^t)|$ and hence, $\SL(n,q^t)$ and $\PGL(r,q^t)$ are isomorphic if $\gcd(r,q^t-1)=1$.

Now we have the following:
$$\SL(r,q^t)\overset{\kappa}{\hookrightarrow} \GL(r,q^t)\overset{\Ff}{\hookrightarrow} \GL(rt,q)\overset{\kappa'}{\twoheadrightarrow} \PGL(rt,q),$$
where $\kappa$ is inclusion and $\kappa'$ is the projection map.
The composition $\iota=\kappa'\Ff\kappa$ maps an element of $\SL(r,q^t)$ onto an element of $\PGL(rt,q)$. The kernel of $\kappa'$ are the $rt\times rt$-matrices scalar matrices over $\F_q$. Now since $1=\gcd(q^t-1,r)=\gcd(q-1,r)$, the only scalar matrix contained in $\SL(r,q^t)$ is the $r\times r$ identity matrix. As $\kappa$ and $\Ff$ are injective maps, we have that $\iota$ has a trivial kernel, and hence, provides an embedding.
\end{proof}

One can see that the condition $\gcd(q^t-1,r)=1$ is not necessary, as the {\em twisted tensor product embedding} embeds $\PGL(2,q^2)$ in $\PG(4,q)$ (in general: $\PGL(r,q^t)$ in $\PGL(r^t,q)$) for all $q$, and hence, also for odd $q$ which has $\gcd(q^2-1,2)=2$. 

However, one should note that we are not interested in an arbitrary embedding of $\PGammaL(r,q^t)$; we are interested in finding those embeddings $\iota$ of $\PGammaL(r,q^t)$ in $\PGammaL(rt,q)$ such that the image of $\iota$ stabilises the Desarguesian spread, i.e. embeddings of $\PGammaL(r,q^t)$ in $\GL(r,q^t)/Z\rtimes \Aut(\F_{q^t})$.
John Sheekey informed us that for $r=2$ and $q^t=3\mod 4$, this is always possible, whereas he showed by computer that for $r=2$ and $q^t=9,25,49$ or $r=2$ it is impossible to find  an embedding of $\PGL(2,q^t)$ in $\GL(2t,q)/Z\rtimes \Aut(\F_{q^t}/\F_q)$ \cite{John}. To the author's knowledge, the general problem of deciding whether there exists an embedding of $\PGL(r,q^t)$ in $\GL(rt,q)/Z\rtimes \Aut(\F_{q^t}/\F_q)$ is an open problem.

\end{document}